\documentclass[11pt]{amsart}

\usepackage{titletoc}
\usepackage[utf8]{inputenc}
\usepackage[T1]{fontenc}
\usepackage{indentfirst}
\usepackage[margin=1in]{geometry} 
\usepackage{lipsum}
\usepackage{amsmath,amscd,amsthm,amssymb,mathrsfs,stmaryrd,color,tikz-cd,graphicx,tikz,mathtools,subfigure}
\tikzcdset{arrow style=tikz,diagrams={>=stealth}}
\usepackage{enumerate}
\usepackage[bookmarks, bookmarksdepth=1, colorlinks=true, linkcolor=blue, citecolor=red, urlcolor=blue,pagebackref]{hyperref}
\usepackage{subfiles}
\usepackage{xcolor}

\usetikzlibrary{matrix,arrows,decorations.pathmorphing,cd,topaths,calc,graphs}
\usepackage[all,cmtip]{xy}

\usepackage{relsize}

\newcommand{\abs}[1]{\lvert#1\rvert}
\newcommand{\norm}[1]{\lVert#1\rVert}

\renewcommand{\Re}{\operatorname{Re}}
\renewcommand{\Im}{\operatorname{Im}}

\makeatletter
\newcommand{\subalign}[1]{%
  \vcenter{%
    \Let@ \restore@math@cr \default@tag
    \baselineskip\fontdimen10 \scriptfont\tw@
    \advance\baselineskip\fontdimen12 \scriptfont\tw@
    \lineskip\thr@@\fontdimen8 \scriptfont\thr@@
    \lineskiplimit\lineskip
    \ialign{\hfil$\m@th\scriptstyle##$&$\m@th\scriptstyle{}##$\crcr
      #1\crcr
    }%
  }
}
\makeatother

\newcommand{\BC}{\mathbb{C}}

\newcommand{\BR}{\mathbb{R}}

\newcommand{\cA}{\mathcal{A}}

\newcommand{\cB}{\mathcal{B}}

\newcommand{\cC}{\mathcal{C}}

\newcommand{\bD}{\mathbf{D}}

\newcommand{\cE}{\mathcal{E}}

\newcommand{\sF}{\mathscr{F}}

\newcommand{\sH}{\mathscr{H}}

\newcommand{\cO}{\mathcal{O}}

\newcommand{\bP}{\mathbf{P}}
\newcommand{\cP}{\mathcal{P}}

\newcommand{\cS}{\mathcal{S}}

\newcommand{\cW}{\mathcal{W}}

\newcommand{\fa}{\mathfrak{a}}

\newcommand{\bc}{\mathbf{c}}

\newcommand{\fg}{\mathfrak{g}}

\newcommand{\fk}{\mathfrak{k}}

\newcommand{\fm}{\mathfrak{m}}

\newcommand{\fn}{\mathfrak{n}}

\newcommand{\fp}{\mathfrak{p}}

\newcommand{\vertiii}[1]{{\left\vert\kern-0.25ex\left\vert\kern-0.25ex\left\vert #1 
    \right\vert\kern-0.25ex\right\vert\kern-0.25ex\right\vert}}

\newcommand{\rom}[1]{\uppercase\expandafter{\romannumeral #1\relax}}
\renewcommand{\emptyset}{\varnothing}

\renewcommand{\tilde}[1]{\widetilde{#1}}

\DeclareMathOperator{\Vol}{Vol}

\DeclareMathOperator{\supp}{Supp}

\DeclareMathOperator{\rank}{rank}

\DeclareMathOperator{\Ad}{Ad}

\newcommand{\SO}{\operatorname{SO}}

\newcommand{\fsl}{\mathfrak{sl}}

\makeatletter
\newcommand{\colim@}[2]{%
  \vtop{\m@th\ialign{##\cr
    \hfil$#1\operator@font lim$\hfil\cr
    \noalign{\nointerlineskip\kern1.5\ex@}#2\cr
    \noalign{\nointerlineskip\kern-\ex@}\cr}}%
}
\newcommand{\Lim}{%
  \mathop{\mathpalette\varlim@{\leftarrowfill@\scriptscriptstyle}}\nmlimits@
}

\newcommand{\colim}{%
  \mathop{\mathpalette\varlim@{\rightarrowfill@\scriptscriptstyle}}\nmlimits@
}

\makeatother

\newtheorem{theorem}{Theorem}[section]

\newtheorem{proposition}[theorem]{Proposition}

\newtheorem{lemma}[theorem]{Lemma}

\theoremstyle{definition}

\theoremstyle{remark}
\newtheorem{remark}[theorem]{Remark}
\newtheorem*{rmk}{Remark}

\usepackage[new]{old-arrows}

\usepackage{amssymb}

\usepackage[utf8]{inputenc}

\title{Higher rank spectral projector estimates and a Stein--Tomas restriction theorem}
\author{Phillip Harris, Jiaqi Hou, and Xiaoqi Huang}
\address{Mathematisches Institut, Endenicher Allee 60, 53115 Bonn, Germany}
\email{harris@math.uni-bonn.de}
\address{Department of Mathematics, Louisiana State University, Baton Rouge, LA 70803, USA}
\email{jhou7@lsu.edu}
\address{Department of Mathematics, Louisiana State University, Baton Rouge, LA 70803, USA}
\email{xhuang49@lsu.edu}
\date{}

\usepackage{hyperref}
\numberwithin{equation}{section}

\begin{document}

\begin{abstract}    
Let $X$ be a compact locally symmetric space of noncompact type. 
We prove sharp upper bounds for the $L^2\to L^p$ operator norms of spectral projectors for joint eigenfunctions of the full ring of invariant differential operators on $X$. This improves the corresponding result of Marshall \cite{marshall2016p}, by removing a logarithmic power loss at the critical exponent.
Using the same method, we also prove the endpoint case of Mockenhaupt's $(L^2,L^p)$ Fourier restriction theorem \cite{Moc91}. 
\end{abstract}

\maketitle

\section{Introduction}

\subsection{Spectral projector}
Let $M$ be a compact $n$-dimensional Riemannian manifold without boundary, and let $\Delta$ be the Laplace--Beltrami operator associated with the metric on $M$.
We let $0=\lambda_0^2<\lambda^2_1\le \lambda^2_2\le \cdots$ denote the eigenvalues labeled with respect to the multiplicity of ${-\Delta}$, and $e_{\lambda_j}$ the associated $L^2$-normalized eigenfunctions, that is,
\begin{equation*}
    (\Delta_g+\lambda^2_j )e_{\lambda_j}=0, \quad \text{and } \quad \int_M |e_{\lambda_j}(x)|^2 \, dx=1.
\end{equation*}
Moreover, $\{e_{\lambda_j}\}$ forms an orthonormal basis of $L^2(M)$.
Sogge \cite{Sog88} (see also Avakumovi\'c \cite{Avakumovic} and Levitan \cite{Levitan} when $p = \infty$) proved the universal bound for $p\ge2$
\begin{equation}\label{eq: Sogge bound}
    \|e_\lambda\|_{L^p(M)}\lesssim \lambda^{\delta(n,p)}.
\end{equation}
Here $\delta(n,p)$ is given by
\begin{equation*}
            \delta(n,p) = \begin{cases}
                \tfrac{n-1}{4} - \tfrac{n-1}{2p} \;&\text{ for }\;2\leq p\leq \tfrac{2(n+1)}{n-1},\\
                \tfrac{n-1}{2} - \tfrac{n}{p} \;\;\;\;&\text{ for }\;\tfrac{2(n+1)}{n-1}\leq p\leq\infty.
            \end{cases}
\end{equation*}
The estimate \eqref{eq: Sogge bound} is seen to be sharp on the standard round sphere $S^n$ by taking $e_\lambda$ to be zonal spherical functions or highest weight spherical harmonics.

For any $\lambda\geq1$, we let $\bP_\lambda$ be the spectral projector around $\lambda$ of radius 1, defined by
\[
\bP_\lambda f = \sum_{|\lambda_j-\lambda|\leq1}\langle f,e_{\lambda_j}\rangle e_{\lambda_j},\qquad f\in L^2(M).
\]
Then Sogge's bound \eqref{eq: Sogge bound} follows from the following sharp spectral projector estimate for any $p\geq2$:
\[
    \|\bP_\lambda f\|_{L^p(M)}\lesssim\lambda^{\delta(n,p)} \|f\|_{L^2(M)},\qquad \text{ for any }f\in L^2(M).
\]

It is sometimes possible to improve the upper bound in \eqref{eq: Sogge bound} by considering joint eigenfunctions of extra operators. See, for example, \cite{Tac19}. In Sarnak's letter to Morawetz \cite{sarnak2004letter}, it is shown that if $X$ is a compact locally symmetric space of dimension $n$ and rank $r$, and $\psi$ is an $L^2$-normalized eigenfunction of the full ring of invariant differential operators on $X$ with Laplace eigenvalue $\lambda^2\ge1$, then the local sup-norm bound is
\begin{align}\label{eq: sarnak sup norm}
    \|\psi\|_\infty \lesssim \lambda^{(n-r)/2}.
\end{align}
This upper bound is also sharp in the case when $X$ is of compact type.
The local bound for all $L^p$-norms of such joint eigenfunctions in higher rank was obtained by Marshall \cite{marshall2016p}. Suppose that $X$ is a quotient of the globally symmetric space $S=G/K$, and assume that $S$ is irreducible and not Euclidean. Let $\fa$ be a maximal abelian subspace of the tangent space, and let $\fa^*$ be its dual. Let $B^*\subset \fa^*$ be a compact set that is bounded away from the singular set. Let $\psi\in C^\infty(X)$ be an eigenfunction of the full ring of invariant differential operators on $X$, with $\|\psi\|_{L^2(X)}=1$ and spectral parameter $t\lambda$ where $t$ is large and $\lambda\in B^*$. Marshall proved that, for any $p\geq2$,
\begin{align}\label{eq: Marshall Lp}
    \|\psi\|_{L^p(X)}\lesssim_{B^*,p}
    \begin{cases}
        (\log t)^{1/2} t^{r\delta(n/r,p)},\qquad&p=p_c,\\
        t^{r\delta(n/r,p)},&p\neq p_c,
    \end{cases}
\end{align}
where $p_c=\frac{2(n+r)}{n-r}$ and the function $\delta$ in the exponent is as in \eqref{eq: Sogge bound}. Moreover, these bounds are sharp up to the logarithmic factor in the case when $X$ is of compact type.

Our main result is to remove the factor $(\log t)^{1/2}$ at the critical exponent $p=\frac{2(n+r)}{n-r}$ for $S$ of noncompact type.

\begin{theorem}\label{thm: main 1}
    Suppose that $G$ is a connected simple Lie group of finite center, and $K$ is a maximal compact subgroup, so that $S=G/K$ is an irreducible symmetric space of noncompact type. Let $\Gamma\subset G$ be a co-compact lattice, and let $X=\Gamma\backslash S$. Let $B^*\subset \fa^*$ be a compact set that is bounded away from the singular set. Let $\psi\in C^\infty(X)$ be an eigenfunction of the full ring of invariant differential operators on $X$, with $\|\psi\|_{L^2(X)}=1$ and spectral parameter $t\lambda$ where $t\geq1$ and $\lambda\in B^*$. For $p=p_c=\frac{2(n+r)}{n-r}$ we have
    \[ \|\psi\|_{L^{p_c}(X)}\lesssim_{B^*}
     t^{r\delta(n/r,p_c)} = t^{r/p_c}.\]
\end{theorem}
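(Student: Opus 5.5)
Our plan is to prove the endpoint bound through a spectral projector estimate and a $TT^*$ argument, with the logarithmic loss removed by a Stein--Tomas type interpolation. On the universal cover this reduction works because of the exponential volume growth of $S$. Concretely, we define an operator $P_{t\lambda}$ on $X$ by convolution with a $K$-biinvariant kernel $k_{t\lambda}$. Its spherical transform is $\hat k_{t\lambda}(\mu)=h(\mu-t\lambda)$, where $h$ is a Schwartz function on $\fa^*$ whose Fourier transform is supported in a small ball of radius $\epsilon$ (smaller than the injectivity radius of $X$). We Weyl-symmetrize $h$ and take it positive near $0$. Then $P_{t\lambda}\psi=c\,\psi$ with $c\gtrsim 1$, and by $TT^*$ it suffices to show
\[
\|P_{t\lambda}P_{t\lambda}^* f\|_{L^{p_c}(X)}\lesssim t^{2r/p_c}\|f\|_{L^{p_c'}(X)} .
\]
The kernel of $P P^*$ is $k=k_{t\lambda}*k_{t\lambda}^*$, which is again $K$-biinvariant with spherical transform $|h(\cdot-t\lambda)|^2$. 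We split $k$ dyadically in the radial variable $|H|$, where $x=k_1e^{H}k_2$. Write $k=\sum_{j\ge0}k_j$, with $k_0$ supported in $|H|\lesssim t^{-1}$ and $k_j$ supported in $|H|\sim 2^{j}t^{-1}$ for $2^j\lesssim t$. Since $\hat k$ has compact Fourier support of size $\epsilon$, only these pieces occur, and everything lives inside one fundamental domain.

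The two ingredients are an $L^1\to L^\infty$ bound and an $L^2\to L^2$ bound for each piece. For the first, we use the Harish-Chandra expansion and stationary phase for spherical functions (the asymptotics of $\varphi_{t\lambda}$ away from the walls, valid since $\lambda\in B^*$ is regular, as in Marshall). These give
\[
|k_j(x)|\lesssim t^{n}(t|H|)^{-(n-r)/2}\sim t^{n}2^{-j(n-r)/2},
\]
that is, decay in $n-r$ directions at the rate of an $(n/r)$-dimensional Sogge kernel raised to the $r$-th power. For the second, we bound the spherical transform: $\|\hat k_j\|_\infty\lesssim t^{n-r}\cdot$ (a bound coming from integrating the $\mathfrak a$-Fourier transform over a shell of radius $2^jt^{-1}$). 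Computing with the Abel transform, the Abel transform of $k_j$ is an $r$-dimensional function of size $t^{n}2^{-j(n-r)/2}$ times the density $\delta(H)\sim (2^j/t)^{n-r}$, supported on a region of volume $(2^j/t)^r$. This gives
\[
\|\hat k_j\|_\infty\lesssim 2^{j(n+r)/2}\, t^{-r}\cdot t^{r}\cdot t^{-r}\cdots ;
\]
in the end the $L^2$ norm grows like $2^{j(n+r)/2}t^{-?}$, just as in Euclidean Stein--Tomas. Interpolating the two bounds gives $\|k_j*\|_{p'\to p}\lesssim t^{2r/p}2^{j\theta(p)}$. The exponent $\theta(p)$ vanishes exactly at $p=p_c$, and this is the source of Marshall's logarithm when the pieces are summed.

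To remove the logarithm we do not sum the pieces individually. Instead we use Bourgain's interpolation trick (or Stein's analytic family). We embed $k$ into the analytic family $k^{(z)}=\sum_j 2^{jz}k_j$, built with a smooth partition, or equivalently we multiply by $(t|H|)^{z}$ with Gamma normalizations. On $\Re z=-(n-r)/2\cdot(\ldots)$ we prove $L^1\to L^\infty$ boundedness pointwise. On the other line we need a uniform $L^2$ bound for the whole sum, not just for each piece. This means bounding $\sup_\mu|\widehat{k^{(z)}}(\mu)|$, which reduces via the Abel transform to the Euclidean multiplier estimate on $\fa$: the Fourier transform of $|h(\cdot-t\lambda)|^2$, multiplied by a homogeneous-type weight, should have bounded Fourier transform. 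This is the step I expect to be the main obstacle. The Abel transform introduces the non-Euclidean density $\delta(H)^{1/2}$ and a non-linear change of variables, and we must show that the summed multiplier is uniformly bounded (the cancellation between dyadic pieces). To do this we will write the spherical function via the Harish-Chandra $c$-function expansion, localize $\mu$ near $t\lambda$ and $H$ in a cone away from the walls, and show that the resulting oscillatory integral in $H$ has the form of a one-dimensional Fourier transform of $|s|^{z-1}$ in the radial direction, tensored with a nondegenerate phase in the $r-1$ angular directions. Applying stationary phase in the angular directions and the explicit formula for the radial Fourier transform of $|s|^{z}$ should give a bound of the form $e^{C|\Im z|}$, which is enough for Stein interpolation. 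Interpolating then gives the $p_c$ bound for the whole kernel at once.
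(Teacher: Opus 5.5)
\paragraph{There is a genuine gap, in your first ingredient.} The kernel's decay is not radial. The correct bound (Marshall's, or Theorem~\ref{thm: Weyl Harish-Chandra symbol} here) is
\[
|k(e^{H})|\lesssim t^{n-r}\prod_{\alpha\in\Delta^+}(1+t|\alpha(H)|)^{-m_\alpha/2}.
\]
Note also that $k(e)\sim t^{n-r}$ for a unit spectral window, not $t^n$.

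On a shell $\|H\|\sim 2^jt^{-1}$, take points near a wall where roots of total multiplicity $N'>0$ vanish. There the bound is only $t^{n-r}2^{-j(n-r-N')/2}$, and this is the true size. So your pointwise bound for $k_j$ is false. Meanwhile the spectral-side bound for a shell is still of size $(2^j/t)^r$, as in codimension-$r$ Stein--Tomas. Interpolating these gives $t^{2r/p_c}2^{jN'\theta/2}$ for the pieces, which grows geometrically in $j$. So the radial decomposition does not even recover Marshall's bound with a logarithm. This is the defect of uniform-decay restriction arguments for $\cO_\Lambda$ noted in the introduction.

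For the same reason, no analytic family built from radial weights $2^{jz}$ or $(t\|H\|)^z$ is bounded $L^1\to L^\infty$ on a line that interpolates to $p_c$. The decomposition must follow the anisotropic decay, i.e.\ be dyadic in $t^{n-r}|\omega(H)|$. The volume of those pieces is then controlled only by Clerc's estimate $\Vol\{|\omega(H)|\le c\}\lesssim c^{r/(n-r)}$, which requires $G$ simple.

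A quick diagnostic: your argument never uses simplicity, yet the projector estimate it would prove is false for products. For a hyperbolic surface times a hyperbolic $3$-manifold, $p_c=14/3$. Tensor products of the rank-one extremizers of Section~\ref{sec: sharpness} give $t^{1/7}\cdot t^{5/14}=t^{1/2}\gg t^{3/7}=t^{r/p_c}$.

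\paragraph{The $L^2$ side is also not carried out,} since your computation ends in $t^{-?}$. The route you sketch hits the obstruction you flag yourself. Cutting $k$ in the Cartan variable on $G$ does not amount to multiplying its Abel transform by the same weight. Also, a radial Fourier transform of $|s|^{z-1}$ with angular stationary phase on a cone away from the walls says nothing about the wall region, which is where the difficulty lies.

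The paper avoids both problems by building the analytic family on the Abel side. It multiplies $g(H)$ (the inverse Fourier transform of $h_{t\lambda}$) by
\[
(2^z-2^{-\frac{r}{n-r}})\,t^{-(n-r)z}\bigl(b_0(t^{n-r}\omega(H))+\textstyle\sum_m 2^{mz}b_1(2^{-m}t^{n-r}\omega(H))\bigr).
\]
The $L^2$ bound at $\Re z=-\frac{r}{n-r}$ is then a purely Euclidean multiplier estimate on $\fa$ (Proposition~\ref{prop: symbol estimates}). It is proved from three inputs:
\begin{itemize}
\item Clerc's volume bound;
\item a tail bound $\Vol\{|\omega|\le 1,\ \|H\|\ge R\}\lesssim R^{-\delta}$, which gives Fourier decay of the dyadic pieces;
\item a partial summation exploiting the factor $2^{iy}-1$ and the finiteness of $\int_{|\omega|\le1}\log(1+\|H\|)\,dH$.
\end{itemize}
That cancellation is what removes the logarithm.

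The price is paid on the $L^1\to L^\infty$ side at $\Re z=\frac12$. There one needs the expansion of $\varphi_{t\nu}(e^X)$ with symbol bounds on all $\nu$-derivatives (Theorem~\ref{thm: Weyl Harish-Chandra symbol}). This shows the spectral integral localizes $H$ to within $t^{-1}$ of $WX$. Consequently the weight, of size $|\omega(H)|^{1/2}$, is bounded by $t^{-(n-r)/2}\prod_\alpha(1+t|\alpha(X)|)^{m_\alpha/2}$, and this exactly cancels the spherical-function decay.
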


We note that, by interpolating with the $L^2$-normalization and Sarnak's sup-norm bound \eqref{eq: sarnak sup norm}, Theorem \ref{thm: main 1} implies the $L^p$-norm bounds \eqref{eq: Marshall Lp} without a logarithmic power factor at $p=p_c$.

Similar to the proofs of $\eqref{eq: Sogge bound}$ and \eqref{eq: Marshall Lp}, it suffices to prove a spectral projector estimate.
We take an orthonormal basis $\{\psi_\mu\}_\mu$ for $L^2(X)$, where each $\psi_\mu$ is an $L^2$-normalized  eigenfunction of the full ring of invariant differential operators on $X$ with spectral parameter $\mu\in\fa_\BC^*$.  We still assume that $B^*\subset \fa^*$ is compact and away from the singular set. Given $\lambda\in B^*$ and $t\ge1$ sufficiently large, we define the spectral projector $\bP^Q_{t\lambda}$ (onto the ball of radius $Q\geq1$ centered at $t\lambda$) to be
\begin{align}\label{eq: spectral projector defn}
    \bP^Q_{t\lambda}f = \sum_{\|\mu-t\lambda\|\leq Q}\langle f,\psi_\mu\rangle\psi_\mu,\qquad f\in L^2(X).
\end{align}
We will denote the radius 1 projector by $\bP_{t\lambda}=\bP^Q_{t\lambda}$ ($Q=1$).
Theorem \ref{thm: main 1} follows immediately from the following regular spectral projector estimates.
\begin{theorem}\label{thm: main 2}
    With notation as in Theorem \ref{thm: main 1}, let $\bP_{t\lambda}$ be the spectral projector \eqref{eq: spectral projector defn} with $\lambda\in B^*$ and $t\ge1$. We have
    \begin{align}\label{eq: thm main 2}
         \|\bP_{t\lambda} f\|_{L^{p_c}(X)}\lesssim_{B^*}t^{r/p_c} \|f\|_{L^2(X)}
    \end{align}
    for any $f\in L^2(X)$. By interpolation, we obtain
    \begin{align}\label{eq: thm main 2 general p}
        \|\bP_{t\lambda} f\|_{L^{p}(X)}\lesssim_{B^*,p}t^{r\delta(n/r,p)} \|f\|_{L^2(X)}
    \end{align}
    for any $p\geq2$ and any $f\in L^2(X)$. Moreover, the estimates \eqref{eq: thm main 2 general p} are sharp for any $p\geq2$.
\end{theorem}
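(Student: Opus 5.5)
We argue by a $TT^*$ reduction combined with a frequency-localized dyadic decomposition in physical space, in the spirit of the Stein--Tomas endpoint and of Bourgain's interpolation trick, and we avoid the $\log t$ loss of \cite{marshall2016p} by summing dyadic scales via Bourgain's trick rather than the triangle inequality.

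First we replace $\bP_{t\lambda}$ by a smooth operator. Choose $h\in C^\infty_c(\fa)$ which is Weyl-invariant and supported near the origin, and whose spherical transform $\hat h$ is nonnegative and $\geq 1$ on the ball $\|\mu-t\lambda\|\le 1$ and its Weyl images. Let $k_{t\lambda}$ be the $K$-biinvariant function on $G$ with spherical transform $\hat h(\mu - t\lambda)$, suitably Weyl-symmetrized. Then $\|\bP_{t\lambda}f\|_{p}\lesssim \|R(k)f\|_p$ for $f$ in the range of $\bP_{t\lambda}$, and $TT^*$ reduces \eqref{eq: thm main 2} to the bound
\[\|R(k_{t\lambda}*k_{t\lambda}^*)\|_{L^{p_c'}\to L^{p_c}}\lesssim t^{2r/p_c}.\]
Since $h$ has small support, $k$ is supported in a small neighbourhood of $K$ in $G$ (below the injectivity radius of $X$), so the problem is local and can be lifted to $S$.

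Next we decompose $k=\sum_{j\ge 0}k_j$ dyadically in the distance from the identity coset: $k_0$ lives in $d(x,o)\lesssim t^{-1}$, and $k_j$ lives in $d\sim 2^j t^{-1}$ for $2^j\lesssim t$. We need two estimates for each piece, both drawn from the spherical-function asymptotics used by Marshall, namely Harish-Chandra expansions and stationary phase in all $r$ directions, which is where the regularity of $\lambda$ (distance of $B^*$ from the walls) enters.

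\begin{itemize}
\item $L^1\to L^\infty$: $\|k_j\|_\infty\lesssim t^{n}(2^j)^{-(n-r)/2}$, the decay coming from the $n-r$ nondegenerate directions transverse to the flats.
\item $L^2\to L^2$: $\|R(k_j)\|_{2\to2}=\sup_\mu|\widehat{k_j}(\mu)|\lesssim 2^j t^{-r}\cdot t^{r}\cdot$ (appropriate normalization). Concretely, the spherical transform of a kernel cut off at scale $2^jt^{-1}$ is a mollification of $\hat h(\cdot-t\lambda)$ at frequency scale $t2^{-j}$ in $r$ dimensions, giving a bound $\lesssim (2^j)^{r}t^{-r}\cdot t^{r}\cdot$ normalization.
\end{itemize}

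Interpolating between these two gives, at $p=p_c$, exactly the critical balance $(2^j)^{0}$ in $j$. This is the source of the logarithm: there are $\sim \log t$ dyadic scales, each contributing the same amount.

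The main obstacle is therefore the endpoint summation. We plan to use Bourgain's interpolation trick (equivalently, the Keel--Tao / Stein--Tomas endpoint argument): the two estimates above, holding with exponents $a_j=2^{j\beta_0}$ and $2^{-j\beta_1}$ of opposite signs, yield the restricted weak type bound $L^{p_c',1}\to L^{p_c,\infty}$ for $\sum_j k_j$. To upgrade this to a strong bound we will use that $TT^*$ has an $L^2$ factorization. Specifically, we apply Bourgain's trick to $T$ itself: splitting the sum over $j$ via a level-set argument, as in the Bak--Seeger approach to the Stein--Tomas endpoint, gives the strong $L^2\to L^{p_c}$ bound for $T$ from the $L^{p_c',1}\to L^{p_c,\infty}$ bound for $TT^*$. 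The delicate point is verifying that the kernel bound for $k_j$ holds uniformly, including near singular directions of $\exp$ at distance $2^jt^{-1}$. The regularity of $\lambda$ ensures the phase is nondegenerate there, uniformly over $B^*$.

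Finally, sharpness of \eqref{eq: thm main 2 general p} comes from two test functions. For $p\ge p_c$, take $f$ with $\bP_{t\lambda}f$ a normalized projection kernel $\bP_{t\lambda}(\cdot,x_0)$: its mass concentrates in a $t^{-1}$-ball with height $t^{(n-r)/2}$. For $p\le p_c$, take a wave packet concentrated in a $t^{-1/2}$-neighbourhood of a piece of a maximal flat; this is a Knapp-type example in $r$ flat directions. Computing the $L^p$ norms of these two functions gives the two regimes of $r\delta(n/r,p)$. The general-$p$ bound itself follows by interpolating \eqref{eq: thm main 2} with the trivial $L^2$ bound and the sup-norm bound \eqref{eq: sarnak sup norm}.
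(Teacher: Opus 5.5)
There is a genuine gap at the core of your argument. You decompose the kernel by geodesic distance, $d(x,o)\sim 2^jt^{-1}$, and claim $\|k_j\|_\infty\lesssim t^{n-r}2^{-j(n-r)/2}$ uniformly on each shell (normalizing the kernel to size $t^{n-r}$ at the origin). This is the rank-one Stein--Tomas scheme, and the shell bound is false for $r\ge2$. The kernel at $e^X$ is governed by $t^{n-r}\prod_{\alpha\in\Delta^+}(1+t|\alpha(X)|)^{-m_\alpha/2}$ (Theorem~\ref{thm: Weyl Harish-Chandra symbol}). Points of the shell $\|X\|\sim 2^j/t$ that lie near a wall of $\fa$ get no decay from the roots that nearly vanish on $X$. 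This degeneracy is in $X$, not in $\nu$: the critical manifolds $Z_K(\nu)wZ_K(X)$ of Proposition~\ref{prop:dkvCosets} grow as $X$ becomes singular. So the regularity of $\lambda$ does not restore nondegeneracy, contrary to your last claim. For example, for $\mathrm{SL}_3(\BR)$ ($n=5$, $r=2$) the decay along a wall is only $(t\|X\|)^{-1}$ rather than $(t\|X\|)^{-3/2}$, because the $K$-orbit of such $X$ is two-dimensional. The true shell bound is therefore $t^{n-r}2^{-ja}$ with some $a<\frac{n-r}{2}$, while the shell multiplier bound $(2^j/t)^r$ is essentially sharp. Interpolating at $p_c$ then gives $t^{2r/p_c}2^{jr(n-r-2a)/(n+r)}$ for the $j$-th piece. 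That is a power loss in $t$, and there is no balanced pair of estimates for Bourgain's trick to act on. A quick sanity check confirms that something must fail: your argument never uses that $G$ is simple, yet the theorem is false without it. For $\mathbb{H}^2\times\mathbb{H}^3$ ($n=5$, $r=2$, $p_c=14/3$), the product of the sharp rank-one bounds (Gaussian beam times zonal) is $t^{1/7}\cdot t^{5/14}=t^{1/2}$, which exceeds $t^{r/p_c}=t^{3/7}$.

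The missing idea is to decompose according to the dyadic size of the Weyl discriminant, $t^{n-r}\omega(H)\sim 2^m$, which is adapted to this anisotropic decay. The paper does this on the Abel-transform side $g(H)$, so the $L^2$ bound becomes a Euclidean Fourier bound on $\fa$. Each piece then has kernel $\lesssim t^{n-r}2^{-m/2}$ and multiplier $\lesssim \Vol\{|\omega|\lesssim 2^mt^{-(n-r)}\}\approx 2^{\frac{mr}{n-r}}t^{-r}$. That volume is finite only by Clerc's lemma (Proposition~\ref{prop: cle volume estimate}), which is exactly where simplicity enters, and the two bounds balance exactly at $p_c$. The paper then removes the $\log t$ not by Bourgain's trick but by Stein interpolation. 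It uses the holomorphic family with weights $2^{mz}$ and prefactor $(2^z-2^{-\frac{r}{n-r}})t^{-(n-r)z}$, interpolating between $\Re z=\frac12$ and $\Re z=-\frac{r}{n-r}$. On the latter line the factor $2^{iy}-1$ allows a partial summation. That step needs the Fourier decay of Lemma~\ref{lem: bd for hm via IBP} and the integrability of $\log(1+\|H\|)$ over $\{|\omega|\le 1\}$ (Lemmas~\ref{lem: tail volume} and~\ref{lem: finite log integral}). Separately, your upgrade step is not valid as stated. The bound $TT^*:L^{p_c',1}\to L^{p_c,\infty}$ only yields $T:L^2\to L^{p_c,\infty}$, and rerunning Bourgain's trick on $T$ gives nothing more. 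What a Bak--Seeger argument would have to supply is $TT^*:L^{p_c',2}\to L^{p_c,2}$; the paper suggests this is plausible with the discriminant decomposition, but it is not with the radial one. Your sharpness examples match the paper's: zonal-type functions for $p\ge p_c$ and Gaussian beams on a maximal flat for $p\le p_c$.
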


\begin{rmk}

    The requirement that $G$ is simple comes from a geometric lemma of Clerc \cite[Proposition 6.1]{Cle87}: the sublevel sets of the Weyl discriminant have finite measure if and only if $G$ is simple. 
    When $G$ is only semisimple, and $S = S_1\times S_2\times\cdots\times S_m$ is a product of irreducible spaces, the $L^p$-norm of an eigenfunction on $X$ as well as the corresponding spectral projector are bounded by the product of the functions \eqref{eq: thm main 2 general p} for each irreducible factor of $S$. Let $n_j$ and $r_j$ be the dimension and the rank of $S_j$.
    Then
    \begin{align*}
        \|\bP_{t\lambda} f\|_{L^{p}(X)}\lesssim_{B^*,p}\prod_{j=1}^mt^{r_j\delta(n_j/r_j,p)} \|f\|_{L^2(X)}
    \end{align*}
    for any $p\geq2$ and any $f\in L^2(X)$.
    We sketch the proof in Section \ref{sec: ss generalization}.
\end{rmk}

Similar to \cite{marshall2016p}, the proof is based on a standard $TT^*$ argument. There is a useful construction used in \cite[Section 4]{anker2023spectral} for the hyperbolic plane. We generalize this idea to a higher-rank setting. Suppose that $k_{t\lambda}\in C_c^\infty(G)$ is a bi-$K$-invariant test function, whose automorphic kernel gives an approximate spectral projector. We consider the Abel transform of $k_{t\lambda}$, which is a function in $C_c^\infty(A)$ that is Weyl invariant. We then dyadically decompose $A$ based on the magnitude of the Weyl discriminant, and multiply each dyadic piece by a complex exponential to construct a holomorphic family of test functions and operators, depending on $z \in \BC$. 

When $z=0$, this recovers the original test function. By Stein's interpolation theorem \cite{Ste56}, we reduce the problem to an $L^1\to L^\infty$ bound when $\Re z =\frac{1}{2}$, and an $L^2\to L^2$ bound when $\Re z =-\frac{r}{n-r}$. Using the Abel transform makes the $L^2\to L^2$ estimate purely a Euclidean problem, avoiding any spherical function analysis. To get rid of the possible $\log t$ factor, we use a partial summation to introduce further cancellation. In the $L^1\to L^\infty$ part, we use oscillatory integral estimates for spherical functions integrated on the spectral side. This motivates us to prove finer estimates for spherical functions in Section \ref{sec:spherical function estimates}, which uses Marshall's local coordinates for $K$ introduced in \cite{marshall2016p}.

Sharpness of \eqref{eq: thm main 2 general p} is proved in Section \ref{sec: sharpness}. The construction is similar to the rank-one situation. Its higher-rank analogue has been mentioned in \cite{marshall2016p}, but only shown for symmetric spaces of compact type. For $p\geq p_c$, we use analogues of zonal functions, which concentrate at a point, to saturate the bound. For $p\leq p_c$, we use analogues of Gaussian beams, which are simply wave packets concentrated on a maximal flat subspace. Note that at $p=p_c$, both types of quasi-modes saturate the bound.

\subsection{Fourier restriction}
It is well known that Sogge's estimates \eqref{eq: Sogge bound} are closely related to $(L^2,L^p)$ Fourier restriction. The method we use to prove Theorem \ref{thm: main 2} also allows us to prove a new Stein--Tomas type result.

Let $G$ and $K$ be as in Theorem \ref{thm: main 1}, with Lie algebras $\fg$ and $\fk$, and the Cartan decomposition $\fg = \fk + \fp$.
We fix a regular element $\Lambda \in \fp$, and let $\cO_\Lambda = \Ad K (\Lambda)$ be the $K$-adjoint orbit of $\Lambda$ in $\fp$, which is diffeomorphic to the boundary of the symmetric space $G/K$. 
We define the Fourier transform for $f\in L^1(\fp)$ by
\[
\widehat{f}(X) = \int_\fp f(Y) e^{-i\langle X,Y\rangle} dY.
\]
Here $dY$ is the Lebesgue measure on the Euclidean space $\fp$ normalized such that the Fourier inversion is given by
\[
f^\vee(X) = \int_\fp f(Y) e^{i\langle X,Y\rangle} dY.
\]
Let $d\mu_{\cO_\Lambda}$ be the measure on $\cO_\Lambda$ induced by the Haar measure on $K$. The $(L^2,L^p)$ Fourier restriction problem asks for which $p$ the bound
\[
    \int_{\cO_\Lambda} |\widehat{f}|^2 d\mu_{\cO_\Lambda} \lesssim \|f\|_{p}^2
    \]
holds for any  $f\in C_c^\infty(\fp)$. By Knapp's homogeneity argument \cite[(1.5)--(1.6)]{DeCarliIosevich1998}, such a result can in general only hold for $1\leq p\leq \frac{2(n+r)}{n+3r}$, and Mockenhaupt \cite{Moc91} proved that this holds whenever $1\leq p< \frac{2(n+r)}{n+3r}$. Our second main result is to prove the endpoint case of this Stein--Tomas restriction theorem by Mockenhaupt.

\begin{theorem}\label{thm: Fourier restriction}
    Let $B\subset\fa$ be a compact set that is bounded away from the singular set.
    Let $\Lambda\in B$ be a regular point and $ p_c' = \frac{2(n+r)}{n+3r}$. Then for any $f\in C_c^\infty(\fp)$ we have
    \[
    \int_{\cO_\Lambda} |\widehat{f}|^2 d\mu_{\cO_\Lambda} \lesssim \|f\|_{p_c'}^2,
    \]
    where the implied constant depends only on $B$.
\end{theorem}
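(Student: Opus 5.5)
By the $TT^*$ argument, the restriction estimate in Theorem~\ref{thm: Fourier restriction} is equivalent to the convolution bound
\[
\|f * \widehat{d\mu_{\cO_\Lambda}}\|_{L^{p_c}(\fp)} \lesssim \|f\|_{L^{p_c'}(\fp)}, \qquad p_c = \tfrac{2(n+r)}{n-r}.
\]
The plan is to prove this bound by the same Stein interpolation scheme as for Theorem~\ref{thm: main 2}, with the Euclidean Fourier transform on $\fp$ replacing the spherical transform.

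\textbf{Structure of the kernel.} The kernel $\widehat{d\mu_{\cO_\Lambda}}(X)=\int_K e^{-i\langle \Ad(k)\Lambda, X\rangle}dk$ is $\Ad K$-invariant, and it is the Euclidean (flat) analogue of the spherical function. By the polar decomposition $X=\Ad(k)H$ with $H\in\fa$, the Lebesgue measure on $\fp$ becomes $|\pi(H)|\,dk\,dH$, where $\pi$ is the product of the positive roots. This Weyl polynomial replaces the Weyl discriminant. The "Abel transform" here is just the projection (Radon transform) onto $\fa$: the Fourier transform of $d\mu_{\cO_\Lambda}$ restricted to $\fa$, once the $\fp\ominus\fa$ directions are integrated out, is the Weyl-symmetrized sum of point masses at $w\Lambda$. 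This reduces $L^2$ questions to Euclidean ones on $\fa$.

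\textbf{Holomorphic family and endpoint bounds.} Dyadically decompose $\fp$ according to $|\pi(H)|\sim 2^j$ and $|H|$. On each piece, multiply the kernel by a complex weight (a power $2^{jz}$ of the dyadic scale, possibly with the partial-summation modification) to form an analytic family $T_z$, with $T_0$ the original operator.

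On the line $\Re z=\tfrac12$, prove an $L^1\to L^\infty$ bound. This requires the sharp pointwise decay
\[
|\widehat{d\mu_{\cO_\Lambda}}(\Ad(k)H)|\lesssim \prod_{\alpha>0}(1+|\alpha(H)|)^{-m_\alpha/2},
\]
uniformly for $\Lambda\in B$. Here $m_\alpha$ is the multiplicity of the root $\alpha$, and the bound follows from stationary phase on $K$, as in the spherical function estimates of Section~\ref{sec:spherical function estimates} using Marshall's coordinates. Note that $\prod_\alpha |\alpha(H)|^{m_\alpha}\sim|\pi(H)|$, so the decay is $\sim|\pi(H)|^{-1/2}$, and the weight cancels exactly this factor.

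On the line $\Re z=-\tfrac{r}{n-r}$, prove an $L^2\to L^2$ bound. By Plancherel this amounts to bounding the Fourier multiplier of $T_z$ in $L^\infty$. By $K$-invariance it suffices to evaluate the multiplier on $\fa$, where it becomes a one-dimensional-per-direction Euclidean oscillatory sum over dyadic shells in $\fa$. The partial summation in $j$ handles the potential logarithmic divergence. Clerc's finiteness of the sublevel sets of $\pi$ (which needs $G$ simple) controls the measure of the dyadic pieces.

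Stein's interpolation theorem then gives the estimate at $z=0$ with exponent $p_c$.

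\textbf{Main obstacle.} The hardest step is the $L^2$ bound: the multiplier must be bounded uniformly in $\Im z$, with only polynomial growth, and without a $\log$ loss. The first approach is to write the multiplier as the Euclidean Fourier transform on $\fa$ of $|\pi(H)|^{1+z}$ times dyadic cutoffs, restricted against the orbit structure. One then uses cancellation between consecutive dyadic pieces, via summation by parts, to obtain a convergent homogeneous-distribution bound. Homogeneity is crucial: since $|\pi(H)|^{-r/(n-r)}$ scaled against the $n$-dimensional measure yields a homogeneous kernel of degree $-r$ on $\fa$, its Fourier transform is bounded near the orbit. Scaling invariance of the restriction problem (dilating $\Lambda$) explains why no $t$-parameter appears, and the implied constant depends only on $B$.
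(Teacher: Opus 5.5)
The gap is in your $L^2\to L^2$ step, and that is where all the difficulty of the theorem lies. Your $L^1\to L^\infty$ step is fine for the family you define.

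You attach the weights $2^{jz}$ to the physical-space kernel $\widehat{d\mu_{\cO_\Lambda}}(X)$ according to the polar component $H(X)\in\fa^+$ of $X=\Ad(k)H$. Your $\pi$ is the paper's $\omega$. For a $K$-invariant kernel $u$, the multiplier on $\fa$ is the $\fa$-Fourier transform of the projection $\mathcal{A}u(H)=\int_{\fp\ominus\fa}u(H+Z)\,dZ$. This projection does not commute with multiplication by $\chi_j(\omega(H(X)))$, because the polar component of $H+Z$ is not $H$. So the multiplier of your $T_z$ is \emph{not} the Euclidean Fourier transform on $\fa$ of the dyadic weights times $\sum_w e^{-i\langle H,w\Lambda\rangle}$. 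Your own remark about the Abel transform is the right idea, but your family does not use it.

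Computed in polar coordinates, the multiplier at $\xi\in\fa$ is $\int_{\fa^+}W_z(H)\,\omega(H)\,J_\Lambda(-H)\,J_\xi(-H)\,dH$, where $W_z\approx|\omega|^z$. This is a Hankel-type transform involving a product of two generalized Bessel functions, not a Fourier transform on $\fa$. Bounding it uniformly would need a two-sided asymptotic analysis with $H$-derivative control of the Bessel amplitudes, including near the walls. Your proposal supplies none of this.

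The homogeneity heuristic does not rescue the step either. The weight $|\omega|^{-\frac{r}{n-r}}$ is homogeneous of degree exactly $-r$ on $\fa$, so its Fourier transform diverges logarithmically. Boundedness on $\Re z=-\frac{r}{n-r}$ requires a prefactor vanishing at every $z=-\frac{r}{n-r}+\frac{2\pi i k}{\log 2}$; the paper uses $2^z-2^{-\frac{r}{n-r}}$. Without it, partial summation only gives $\lesssim|2^{iy}-1|^{-1}$. That bound is infinite at $y\in\frac{2\pi}{\log 2}\mathbb{Z}$, so it is not admissible in Stein's theorem. Moreover, $|\omega|^z$ is singular along the walls and $\{|\omega|\le1\}$ is unbounded. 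One therefore needs the power tail bound of Lemma \ref{lem: tail volume} and the log-integrability of Lemma \ref{lem: finite log integral}, not just Clerc's finiteness.

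The paper avoids this by placing the weights on the Abel side rather than on $X$. It defines the frequency multiplier directly on $\fa$ by
\[
\mu_t(H';z)=(2^{z}-2^{-\frac{r}{n-r}})\sum_{w\in W}\int_\fa b_0(t^{-1}\|H\|)\Bigl(b_0(\omega(H))+\sum_{m\geq1}2^{mz}b_1(2^{-m}\omega(H))\Bigr)e^{i\langle H,H'-w\Lambda\rangle}\,dH .
\]
It then extends $\mu_t$ $K$-invariantly and cuts it off by $\phi$ near $\Lambda$. With this choice the $L^2$ bound is literally $\sup|\mu_t(\cdot;z)|$, which after $H\mapsto tH$ is exactly Proposition \ref{prop: symbol estimates}.

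The cost moves to the $L^1\to L^\infty$ side, because the weight now sits on the Abel variable $H$ rather than on $X$. The paper applies Theorem \ref{thm: Weyl Harish-Chandra symbol 2} to $\omega(H')\phi(H')J_{H'}(X)$ and uses its $\nu$-derivative bounds to integrate by parts in $H'$. This localizes $H$ to $O(1)$-neighborhoods of $-wX$. The inequality $1+|\alpha(H)|\lesssim(1+|\alpha(wX)|)(1+\|H+wX\|)$ then transfers $|\omega(H)|^{1/2}$ onto the decay $\prod_\alpha(1+|\alpha(X)|)^{-m_\alpha/2}$.

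Finally, at $z=0$ the untruncated Abel-side multiplier is the singular orbit measure itself, not a function. So the paper uses the truncation $b_0(t^{-1}\|H\|)$ and a Fatou limit as $t\to\infty$ (Lemma \ref{lem: dominated convergence}) in place of your claim that $T_0$ is the original operator.
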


By taking the adjoint, this is equivalent to bounding a Fourier extension operator. For any $g\in C_c^\infty(\fp)$, we let
\[
(gd\mu_{\cO_\Lambda})^\vee (X) =  \int_{\cO_\Lambda} g(\xi) e^{i\langle X,\xi \rangle} d\mu_{\cO_\Lambda}(\xi).
\]

\begin{theorem}\label{thm: Fourier extension}
    Let $B\subset\fa$ be a compact set that is bounded away from the singular set.
    Let $\Lambda\in B$ be a regular point and $ p_c = \frac{2(n+r)}{n-r}$. Then for $g\in C_c^\infty(\fp)$ we have
    \[
    \|(gd\mu_{\cO_\Lambda})^\vee \|^2_{p_c} \lesssim \int_{\cO_\Lambda} |g|^2 d\mu_{\cO_\Lambda},
    \]
    where the implied constant depends only on $B$.
\end{theorem}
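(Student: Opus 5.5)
By duality, Theorem \ref{thm: Fourier extension} is equivalent to Theorem \ref{thm: Fourier restriction}. I will prove the restriction form through the $TT^*$ formulation, following the same strategy as for Theorem \ref{thm: main 2}. Write $Tf=\widehat f|_{\cO_\Lambda}$. Then $T^*T f = f * \widehat{\mu}$, where $\widehat{\mu}=\widehat{d\mu_{\cO_\Lambda}}$. Since $d\mu_{\cO_\Lambda}$ is the pushforward of Haar measure on $K$, we have
\[
\widehat{\mu}(X)=\int_K e^{-i\langle \Ad(k)\Lambda, X\rangle}\,dk .
\]
This is the Euclidean (flat, Cartan motion group) spherical function. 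It is $\Ad K$-invariant, so it is determined by its restriction to $\fa$, where it equals the Harish-Chandra--Itzykson--Zuber type integral $\phi_\Lambda(H)=\int_K e^{-i\langle \Ad(k)\Lambda,H\rangle}dk$. It therefore suffices to prove
\[
\|f*\widehat\mu\|_{L^{p_c}(\fp)}\lesssim \|f\|_{L^{p_c'}(\fp)} .
\]

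\textbf{Dyadic decomposition and analytic family.} In polar coordinates $\fp=\Ad K(\fa^+)$, Lebesgue measure is $|\pi(H)|\,dk\,dH$, with $\pi(H)=\prod_{\alpha>0}\alpha(H)^{m_\alpha}$. The flat analogue of the spherical function estimates gives $|\phi_\Lambda(H)|\lesssim_B \prod_{\alpha>0}(1+|\alpha(H)|)^{-m_\alpha/2}$, uniformly for $\Lambda\in B$. (In the flat case this follows from stationary phase on $K$ for the phase $\langle \Ad(k)\Lambda,H\rangle$, using Marshall's coordinates as in Section \ref{sec:spherical function estimates}.) Let $D(H)=\prod_{\alpha>0}(1+|\alpha(H)|)^{m_\alpha}$ be the flat analogue of the Weyl discriminant. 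Split $\widehat\mu=\sum_{j\ge0}\widehat\mu\,\chi_j$, where $\chi_j$ is a smooth $\Ad K$-invariant cutoff to $\{D\sim 2^{j}\}$, and form the analytic family
\[
\widehat\mu_z=\sum_{j\ge 0} c_j(z)\,2^{jz'}\widehat\mu\,\chi_j ,
\]
where the exponent $z'$ is an affine function of $z$ and the weights $c_j(z)$ come from the partial summation trick. I choose the normalization so that $z=0$ gives back $\widehat\mu$. On the line $\Re z=\tfrac12$ I will show $\|\widehat\mu_z\|_\infty\lesssim 1$: the decay $|\phi_\Lambda|\lesssim D^{-1/2}$ exactly cancels the weight $D^{1/2}$. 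On the line $\Re z=-\tfrac{r}{n-r}$ I will show $\|(\widehat\mu_z)^\vee\|_\infty\lesssim1$, which is the $L^2\to L^2$ bound for convolution. Stein interpolation at $z=0$ then yields exactly the exponent pair $(p_c',p_c)$, because the sublevel sets $\{D\le s\}$ have measure growing like $s^{r/(n-r)}$ in the appropriate scaling. Here Clerc's finiteness result, in its flat version for simple $G$, is what is needed.

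\textbf{The $L^2$ bound (main obstacle).} The Fourier transform of $\widehat\mu\chi_j$ is $d\mu_{\cO_\Lambda}*\widehat{\chi_j}$. I will compute it using the flat Abel transform, namely the projection $\fp\to\fa$. This reduces the problem to a Euclidean estimate on $\fa$ for Weyl-invariant functions supported near $W\Lambda$, in analogy with the way the Abel transform was used in the proof of Theorem \ref{thm: main 2}. Each dyadic piece contributes $O(1)$ uniformly, but naively summing over $j$ loses $\log$ of the number of scales, which is infinite here. The key step is to choose $c_j(z)$, for example as differences of smooth functions of $2^{-j}$ obtained by summation by parts, so that consecutive pieces telescope and the sum becomes an oscillatory sum with geometric decay in the Fourier variable. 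This is the step I expect to be hardest. What I would try first is to write $\sum_j 2^{jz'}\chi_j$ as a smooth homogeneous-type symbol $D^{z'}\tilde\chi$ and prove directly that the Fourier transform of $\widehat\mu\, D^{z'}$, which has $\Re z'$ negative, is bounded in sup norm. For this I would use the spectral-side representation of $\phi_\Lambda$ and integrate by parts in the radial variables on $\fa$. Finally, all constants will depend only on $B$, through lower bounds for $|\alpha(\Lambda)|$.
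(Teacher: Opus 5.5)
\textbf{Verdict: genuine gap.} The $TT^*$ reduction, the decay bound $|\widehat\mu(H)|\lesssim\prod_{\alpha}(1+|\alpha(H)|)^{-m_\alpha/2}$ and the choice of interpolation lines are fine. The gap is the step that carries the endpoint: the $L^2\to L^2$ bound on $\Re z=-\frac{r}{n-r}$ is not proved, and the route you sketch for it does not work as set up.

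\emph{Why the sketched route fails.} You put the dyadic weights on the kernel, multiplying $\widehat\mu$ by $\Ad K$-invariant cutoffs $\chi_j$ on $\fp$. The $L^2$ bound then becomes $\sup_{\xi\in\fp}|d\mu_{\cO_\Lambda}*\widehat{\Phi_z}(\xi)|\lesssim 1$, with $\Phi_z=\sum_j c_j(z)2^{jz'}\chi_j$.
\begin{itemize}
\item The flat Abel transform does not turn this into a Euclidean problem on $\fa$. It sends $\widehat\mu$ to a Weyl sum of pure exponentials $e^{\pm i\langle w\Lambda,H\rangle}$, but it does not commute with multiplication by $\chi_j$. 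What you actually face is $\int_{\fa^+}\omega(H)\Phi_z(H)\widehat\mu(H)\overline{J_\xi(H)}\,dH$, uniformly in all $\xi$ (including singular and large $\xi$), with cutoffs in $D$ that are far from symbol-like near the walls.
\item The direct estimate you would try first fails on the critical line. At $z'=-\frac{r}{n-r}$ and $\xi\in-\cO_\Lambda$, the quantity is essentially $\int_{\fa^+}\omega(H)D(H)^{z'}|J_\Lambda(H)|^2\,dH$, and every shell $\{D\sim 2^j\}$ contributes $\asymp 1$.
\item So one needs a prefactor vanishing there (the paper's $2^z-2^{-\frac{r}{n-r}}$, which produces $2^{iy}-1$) together with a quantitative partial summation. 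That argument uses homogeneity of the weight, so that each shell rescales to the fixed profile $b_1(\omega(H))$; your inhomogeneous $D$ destroys this.
\item It also uses Proposition \ref{prop: cle volume estimate}, the power tail bound of Lemma \ref{lem: tail volume}, and the $\log$-integrability of Lemma \ref{lem: finite log integral}. None of this is supplied.
\item You also give no truncation to make a family with infinitely many scales admissible for Stein interpolation.
\end{itemize}

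\emph{The missing idea: put the weights on the Abel side rather than on $\fp$.} The paper defines the multiplier near the orbit as $\phi\,\mu_t(\cdot;z)$. Here $\mu_t(\Ad k(H');z)$ is the Weyl-symmetrized $\fa$-Fourier transform, evaluated at $H'-w\Lambda$, of $(2^z-2^{-\frac{r}{n-r}})\,b_0(t^{-1}\|H\|)\bigl(b_0(\omega(H))+\sum_m 2^{mz}b_1(2^{-m}\omega(H))\bigr)$.
\begin{itemize}
\item The $L^2\to L^2$ norm is then just $\sup_{H'}|\mu_t(H';z)|$. After $H\mapsto tH$, this is exactly Proposition \ref{prop: symbol estimates}.
\item The price is that the $L^1\to L^\infty$ bound is no longer a mere size estimate. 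It needs Theorem \ref{thm: Weyl Harish-Chandra symbol 2}: the Weyl-sum expansion of $\omega(H')\phi(H')J_{H'}(X)$ with bounds on all $H'$-derivatives.
\item With that, nonstationary phase in $H'$ localizes $H$ near $-wX$. There $|\omega(H)|^{1/2}\lesssim(1+\|H+wX\|)^{\frac{n-r}{2}}\prod_\alpha(1+|\alpha(X)|)^{m_\alpha/2}$ cancels the Bessel decay.
\item The cutoff $b_0(t^{-1}\|H\|)$ keeps the family finite, and Lemma \ref{lem: dominated convergence} with Fatou's lemma removes it.
\end{itemize}

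To repair your proof, you would either adopt this placement of the weights (and then supply the derivative-in-$H'$ symbol bounds), or find a genuinely new proof of the uniform bound for $d\mu_{\cO_\Lambda}*\widehat{\Phi_z}$.
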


To see the relation to the classical sphere Fourier restriction, we take $G$ to be the identity component of $\SO(n,1)$ so that $K$ is isomorphic to $\SO(n)$ and $\fp$ is an $n$-dimensional Euclidean space. Then $\cO_\Lambda\simeq \SO(n)/\SO(n-1)$ is an $(n-1)$-sphere in $\fp$. These classical cases were proved in \cite{Tom75,Tom78} by Tomas and Stein, its general
($L^p,L^p$) formulation has also been studied extensively; see, for example,
\cite{hw} and the references therein.

The sphere Stein--Tomas restriction theorem was generalized to submanifolds of higher codimension by Greenleaf \cite{Gre81}, and to measures by Mockenhaupt \cite{Moc00},  Mitsis \cite{Mit02}, and Bak--Seeger \cite{BK}, based on the uniform decay assumption in every direction of the Fourier transform of the induced measure with a prescribed order. However, the radial estimates are inadequate for higher-rank $\cO_\Lambda$ in Theorem \ref{thm: Fourier restriction}. In fact, the generalized Bessel function estimates by Clerc \cite[Theorem 5.1]{Cle87} show that the decay for the Fourier transform of $d\mu_{\cO_\Lambda}$ weakens near the singular subset of $\fp$.

The classical Stein--Tomas restriction theorem was further strengthened to a Lorentz space estimate by Bak and Seeger \cite{BK}, using Bourgain's interpolation argument introduced in \cite{Bou} in the study of the spherical maximal function. It seems likely that the arguments in the present paper can be combined with
the proof of Proposition 2.1 in \cite{BK} to obtain the corresponding Lorentz
space improvement for our Fourier restriction result. In particular, we expect the $L^{p_c}$ estimates in Theorems 1.1 and 1.2 to remain valid with $L^{p_c}$ replaced by $L^{p_c,2}$.

\subsection{Acknowledgments}
The authors thank Simon Marshall for helpful comments.
P.H. was supported by Deutsche Forschungsgemeinschaft (DFG) Grant No. SFB-TRR 358/1 2023--491392403.
J.H. was supported by the AMS-Simons Travel Grant. X.H. was supported in part by the Simons Foundation and NSF (DMS-2452860).
\subsection{AI Disclosure}
AI was used primarily for advice and editing tasks. The proof and main ideas in the paper were human-generated. The form of Theorem \ref{thm: Weyl Harish-Chandra symbol}, as a modification of \cite[Theorem 1.5]{marshall2016p}, was suggested by AI. 
\section{Notation}

\subsection{Lie groups and algebras}
Let $G$ be a connected simple Lie group of finite center, and let $K$ be a maximal compact subgroup. Let $\fg,\fk$ be their Lie algebras. Let $\langle\,,\,\rangle:\fg\times\fg\to\BR$ be the Killing form. Let $\fp$ be the orthogonal complement of $\fk$ in $\fg$ with respect to the Killing form. The Riemannian structure of $S=G/K$, which is an irreducible globally symmetric space of noncompact type, is given by the Killing form restricted to $\fp$.
Let $\fa$ be a maximal abelian subalgebra in $\fp$. We have the root decomposition
\[
\fg=\fm+\fa+\sum_{\alpha\in\Delta} \fg_\alpha.
\]
Here $\fm\subset\fk$ is the centralizer of $\fa$ in $\fk$, $\Delta\subset\fa^*$ is the restricted root system of $\fg$ with respect to $\fa$ and $\fg_\alpha$ is the $\alpha$-root space. Denote the multiplicity of $\alpha$ by $m_\alpha=\dim \fg_\alpha$. Let $M$ be the centralizer of $\fa$ in $K$, and $M'$ the normalizer of $\fa$ in $K$ so that $W=M'/M$ is the Weyl group.
Let $\Delta^+\subset\Delta$ be a choice of positive roots and let $\fn=\sum_{\alpha\in\Delta^+}\fg_\alpha$. We have the corresponding Iwasawa decomposition of the Lie algebras $\fg=\fn+\fa+\fk$.
Let $N=\exp\fn$ and $A=\exp\fa$, so that $G=NAK$ is the Iwasawa decomposition for $G$. Let $A:G\to\fa$ be the map defined by $g\in N \exp(A(g))K$.
Let $\fa^+\subset \fa$ be the Weyl chamber corresponding to $\Delta^+$, that is
\(
\fa^+=\{X\in\fa\mid \alpha(X)>0\;\forall\alpha\in\Delta^+\},
\)
and let $A^+=\exp\fa^+$.  Let $\rho=\frac{1}{2}\sum_{\alpha\in\Delta^+}m_\alpha \alpha$. We let $n=\dim S$ be the dimension of $S$ and $r=\dim A$ be the rank. It may be seen that $\sum_{\alpha\in\Delta^+}m_\alpha=n-r$. We denote the Weyl discriminant by
\[
\omega (H) = \prod_{\alpha\in\Delta^+} \alpha(H)^{m_\alpha},\quad H\in\fa.
\]
We define $\fa_+^*$ to be the set $\{\nu \in \fa^* \mid \langle\nu,\alpha\rangle>0,\forall \alpha\in\Delta^+  \}$.

Let $dn,da$ be some Haar measures on $N,A$, respectively, and let $dk$ be the probability Haar measure on $K$. We normalize the Haar measure $dg$ on $G$ so that the following integral formula hold. For any $f\in C_c(G)$
\begin{align*}
    \int_G f(g)dg =  \int_{NAK} f(nak)a^{-2\rho} dndadk.
\end{align*}

\subsection{Spherical functions}

The ring of left invariant differential operators on $G/K$, denoted by $\bD(G/K)$,  is isomorphic to $U(\fa)^W$, via the Harish-Chandra homomorphism $\gamma$.
Let $\fa_\BC^*$ be the complexification of $\fa^*$ and we define $\langle\,,\,\rangle: \fa^*_\BC\times\fa\to\BC$ to be the extension of the natural dual pairing. Given $\lambda\in\fa_\BC^*$, there is a unique function $\varphi_\lambda\in C^\infty(G)$ such that $\varphi_\lambda(e)=1$, $\varphi_\lambda$ is bi-$K$-invariant, and
\[
D\varphi_\lambda = \gamma(D)(i\lambda)\varphi_\lambda,\quad\forall D\in \bD(G/K).
\]
Moreover, due to Harish-Chandra $\varphi_\lambda$ can be realized as an average:
\begin{align}\label{eq: HC integral for varphi}
\varphi_\lambda(g)=\int_K e^{\langle i\lambda+\rho,A(kg)\rangle} dk.
\end{align}
Two spherical functions $\varphi_\lambda$ and $\varphi_\mu$ are the same if and only if $\lambda\in W\mu$.

\subsection{Harish-Chandra transform}
The Harish-Chandra transform is defined by
\[
\sH f(s)=\int_G f(g)\varphi_{-s}(g) dg.
\]
for functions $f$ on $G$. By the Paley--Wiener theorem of Gangolli \cite{Gan71},  $\sH$ is an isomorphism from the space of bi-$K$-invariant smooth compactly supported functions
$C_c^\infty(G\sslash K)$ onto the  Weyl invariant  Paley--Wiener space  $ \cP\cW(\fa^*)^W$. We have an integral formula for its inversion $\sH^{-1}$:
\[
f(g)=\frac{1}{|W|}\int_{\fa^*} \sH f(s) \varphi_s(g) d\mu(s)=\int_{\fa_+^*} \sH f(s) \varphi_s(g) d\mu(s),
\]
where $d\mu(s)=c|\bc(s)|^{-2}ds$ the is Plancherel measure. Here $c$ is some constant and $\bc$ is the Harish-Chandra $\bc$-function.

\subsection{Maass forms}
Suppose $\Gamma\subset G$ is a cocompact lattice, and let $X=\Gamma\backslash S$. We say a function $\psi_\lambda\in C^\infty(X)$ is a Maass form of spectral parameter $\lambda\in\fa^*_\BC$ if $\psi_\lambda$ is a joint eigenfunction of the ring $\bD(G/K)$ and satisfies
\[
D\psi_\lambda = \gamma(D)(i\lambda) \psi_\lambda,\qquad\forall D\in\bD(G/K).
\]
By the spectral theory, $L^2(X)$ has an orthonormal basis of Maass forms $\{\psi_\mu\}_\mu$ for $\mu$ (with multiplicities) in the unitary spectrum.

\section{Spherical function estimates}\label{sec:spherical function estimates}

Let $B\subset\fa$ and $B^*\subset \fa^*_+$ be compact, and let $c \in C^\infty(B \times B^* \times K)$ and $c'\in C^\infty( B^* \times K)$. Set
\[
 I_t(c;X,\nu)=\int_K e^{it\langle\nu,A(k\exp X)\rangle}
                  c(X,\nu,k)dk,
\]
and
\[
 E(c';X,\nu)=\int_K e^{i\langle\nu,\Ad k X\rangle}
                  c'(\nu,k)dk.
\]

In this section, we prove the following oscillatory integral estimates, which can be applied to spherical functions.
\begin{theorem}\label{thm: Weyl Harish-Chandra symbol}
For $t\geq1$ let $c$ and $I_t(c;X,\nu)$ be as above, and suppose that $X\in B$ and $\nu\in B^*$.
There exist coefficients $a_{w,t}(X,\nu)$, smooth in $\nu$ on a neighborhood of $B^*$, such that
\begin{equation}\label{eq:target}
 I_t(c;X,\nu)=\sum_{w\in W}e^{it\langle\nu,wX\rangle}
                      a_{w,t}(X,\nu),
\end{equation}
and for every multi-index $\gamma$, $X\in B$, and $\nu\in B^*$,
\begin{equation}\label{eq:symbol}
    |\partial_\nu^\gamma a_{w,t}(X,\nu)|\lesssim \prod_{\alpha\in\Delta^+}(1+t|\alpha(X)|)^{-m_\alpha/2},
\end{equation}
where the implied constant depends only on $\gamma$, $B$, $B^*$, and $c$.
\end{theorem}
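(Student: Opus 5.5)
We will prove Theorem \ref{thm: Weyl Harish-Chandra symbol} by stationary phase on $K$ (or rather on $K/M$), organised around Marshall's local coordinates \cite{marshall2016p} near the critical set. The phase is $\phi_X(k)=\langle\nu,A(k\exp X)\rangle$. For regular $\nu$ its critical points in $k$ are exactly $k\in M'$, and there $\phi_X(w)=\langle\nu,w^{-1}X\rangle$, up to relabelling $w\mapsto w^{-1}$ (the Duistermaat--Kolk--Varadarajan analysis). The Hessian at a critical point degenerates when $X$ approaches a wall. More precisely, in the coordinates $k=\exp(\sum_{\alpha>0} Y_\alpha)$ with $Y_\alpha\in\fk_\alpha=(1+\theta)\fg_\alpha$ near $w$, the $\fk_\alpha$-block of the Hessian has size roughly $\langle\nu,\alpha\rangle\,\alpha(wX)\cdot(\text{smooth, nonvanishing factor})$. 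Marshall's bound \cite[Theorem 1.5]{marshall2016p} is the estimate $|I_t|\lesssim\prod_\alpha(1+t|\alpha(X)|)^{-m_\alpha/2}$ itself. Our task is to upgrade this to a decomposition into $|W|$ oscillating pieces whose amplitudes are symbols in $\nu$.

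The first step is to fix a smooth partition of unity $\{\chi_w\}_{w\in W}$ on $K$, where $\chi_w$ is supported in a small neighbourhood $U_w$ of $wM$ and the $U_w$ cover a neighbourhood of $M'$, together with a remainder $\chi_0$ supported away from $M'$. We then set
\[
a_{w,t}(X,\nu)=e^{-it\langle\nu,wX\rangle}\int_K e^{it\phi_X(k)}c\chi_w\,dk .
\]
The $\chi_0$ part has no critical points, uniformly in $X\in B$ and $\nu\in B^*$. We want it to be $O(t^{-N})$ with all $\nu$-derivatives, and then absorb it into any $a_{w,t}$. This requires care, because $\nabla_k\phi_X$ vanishes identically when $X=0$, so ``no critical points'' is not uniform as $X\to0$. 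The fix is to write $\nabla_k\phi_X(k)=$ (a linear map applied to $X$) with a lower bound of the form $|\nabla\phi_X|\gtrsim|X|$ away from $M'$. Integration by parts then gains $(t|X|)^{-1}$ per step, and $(t|X|)^{-N}\lesssim\prod_\alpha(1+t|\alpha(X)|)^{-m_\alpha/2}$ since $|\alpha(X)|\lesssim|X|$. When $t|X|\le1$ we use the trivial bound.

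The heart of the argument is the local piece near $w$, which by left translation we reduce to $w=e$. Writing $\phi_X(k)-\langle\nu,X\rangle$ in Marshall's coordinates gives
\[
\phi_X(\exp Y)-\langle\nu,X\rangle=\sum_\alpha \alpha(X)\,Q_\alpha(Y_\alpha)+\text{(higher order terms, each divisible by some }\alpha(X)\text{ with matching structure)}.
\]
The amplitude $a_{e,t}$ is then an oscillatory integral with no linear oscillation left. We estimate it together with its $\nu$-derivatives. A $\partial_\nu$ derivative either falls on $c$, or brings down a factor $it\,\partial_\nu(\phi_X-\langle\nu,X\rangle)$, which vanishes to second order at $Y=0$ with coefficients proportional to the $\alpha(X)$. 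Such a factor is therefore of the form $t\sum\alpha(X)O(|Y_\alpha|^2)$, and each such factor can be absorbed by integrating by parts in $Y_\alpha$ against the phase. This follows the Hörmander-type principle that $t\alpha(X)|Y_\alpha|^2$ is $O(1)$ on the effective support $|Y_\alpha|\lesssim(t|\alpha(X)|)^{-1/2}$. For the size bound we then apply Marshall's estimate, or redo it directly: localise each $Y_\alpha$ dyadically relative to the scale $(t|\alpha(X)|)^{-1/2}$, integrate by parts on the outer dyadic shells, and note that the inner ball has volume $\prod(1+t|\alpha(X)|)^{-m_\alpha/2}$.

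The main obstacle is this local step when roots are dependent. The higher-order terms couple the different $Y_\alpha$, and the $\alpha(X)$ can be small at very different rates, so a naive product of one-variable stationary phase arguments fails. We plan to handle this as Marshall does: order the roots by the size of $|\alpha(X)|$ and use the structure of the phase, in which terms involving $Y_\beta$ with $|\beta(X)|$ small are controlled by $|\beta(X)|$, to run the integration by parts coordinate by coordinate, with constants uniform in $X\in B$. We would verify that differentiation in $\nu$ preserves this ``divisible by $\alpha(X)$'' structure, which is plausible because $\phi_X$ is linear in $\nu$.
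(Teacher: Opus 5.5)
There is a genuine gap in your first structural step. You assert that for regular $\nu$ the critical set of $k\mapsto\langle\nu,A(k\exp X)\rangle$ is $M'$. From this you conclude that $|\nabla_k\phi_X|\gtrsim|X|$ on $\supp\chi_0$, so the remainder piece is $O(t^{-N})$. This holds only for regular $X$.

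By Proposition \ref{prop:dkvCosets}, for regular $\nu$ the critical set is $\bigcup_w Z_K(\nu)wZ_K(X)=M'Z_K(X)$. This set is positive-dimensional as soon as some $\alpha(X)=0$. For example, for $G=SL(3,\BR)$ and $X=\mathrm{diag}(1,1,-2)$ it contains a whole circle of rotations in $\SO(3)$, which meets $M'$ in only finitely many points.

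Now take $X\in B$ with $|X|\sim1$ and $|\alpha(X)|=\epsilon$ small, and let $X_0$ be the nearby wall point. Then $\nabla_k\phi_X=O(\epsilon)$ along $M'Z_K(X_0)\cap\supp\chi_0$. When $t\epsilon\lesssim1$, the $\chi_0$-piece is in general of the same order as the main terms, not $O(t^{-N})$. The same failure occurs for small $X$ whose direction $X/|X|$ is near a wall, so no bound $|\nabla\phi_X|\gtrsim|X|$ holds away from $M'$.

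So a fixed partition of $K$ into neighbourhoods of the points $wM$, plus a negligible remainder, cannot be uniform in $X\in B$. The critical points $w$ and $ws_\alpha$ merge along $wZ_K(X_0)$, and near intersections of walls several such mergers happen at different rates.

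To repair this, you must show that on all of $K$, not just near $M'$, the integral splits into pieces of the form $e^{it\langle\nu,wX\rangle}$ times a $\nu$-symbol obeying \eqref{eq:symbol}, uniformly as several $\alpha(X)$ degenerate. This is precisely the coupled-root analysis you defer to ``as Marshall does.'' Citing \cite[Theorem 1.5]{marshall2016p} does not supply it, since that result bounds $I_t$ and its $X$-derivatives, not this factorization.

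The paper proceeds as follows.
\begin{enumerate}
\item It passes to Marshall's blown-up coordinates $u=\iota(X)$, with $u_i=\beta_{i+1}(X)/\beta_i(X)$.
\item It covers the compact set $\cB\times B^*\times K$ by the charts of \cite[Theorem 4.7]{marshall2016p}, centred at arbitrary $\ell^\circ\in K$ and not only at points of $M'$. In these charts $\phi(\pi(u),\nu,\ell)=\phi_S(u,\nu)-\sum_{i\le d}\langle\nu,\alpha_i\rangle\alpha_i(w\pi(u))z_i^2+Q(u)L$.
\item It proves the additional identity $\phi_S(u,\nu)-\langle\nu,w\pi(u)\rangle=R(u,\nu)Q(u)$, using \cite[Corollary 4.15(d)]{marshall2016p}.
\end{enumerate}

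With this in hand, each chart contributes $e^{it\langle\nu,wX\rangle}$ times an amplitude bounded by $(1+t|Q(u)|)^{-N}\prod_{i\le d}(1+t|\alpha_i(w\pi(u))|)^{-1/2}$. The roots absent from the quadratic form satisfy $|\alpha(w\pi(u))|\lesssim|Q(u)|$, so the rapid decay in $tQ(u)$ supplies their factors. The $\nu$-derivatives become harmless after substituting $y_i=\langle\nu,\alpha_i\rangle^{1/2}z_i$; the only new factors are $tQ(u)\,D_\nu R$, which the same rapid decay absorbs.

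Your heuristic of integrating by parts against $t\alpha(X)|Y_\alpha|^2$ is the same mechanism as that substitution. It becomes a proof only once you have such a uniform normal form, including the charts away from $M'$ and the identification of the critical value with $\langle\nu,wX\rangle$ modulo $Q(u)$.
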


By a theorem of Duistermaat \cite{Dui84}, this is easily seen to be equivalent to a similar estimate for $E(c';X,\nu)$. 


\begin{theorem}[\cite{Dui84}]
    Let $\pi:\fp\to\fa$ be the orthogonal projection.
    There is a real analytic map $\Psi:\fp\to K$ such that the following holds.
    \begin{enumerate}
        \item For each $X\in\fp$, $k\mapsto \Psi(\Ad k(X)) k$ is a diffeomorphism from $K$ onto $K$.
        \item  For all $X\in\fp$, $A\circ\exp(\Ad \Psi (X)(X)) = \pi(X)$.
    \end{enumerate}
\end{theorem}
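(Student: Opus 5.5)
The statement is Duistermaat's theorem, and I would prove it by a Moser-type homotopy. The idea is to interpolate between the linear projection $\pi$ and the Iwasawa projection $A\circ\exp$ on $\fp$. For $s\in(0,1]$ set
\[
f_s(Y)=s^{-1}A(\exp(sY)),\qquad Y\in\fp,
\]
and $f_0=\pi$. Since $A(\exp(sY))=s\,\pi(Y)+O(s^2)$, the family $s\mapsto f_s$ is real analytic on $[0,1]\times\fp$. Moreover, $f_s(H)=H$ for every $s$ and every $H\in\fa$.

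The goal is to find a time-dependent vector field on $\fp$ tangent to the $K$-orbits, of the form
\[
v_s(Y)=[\xi_s(Y),Y],\qquad \xi_s:\fp\to\fk \text{ real analytic},
\]
solving the homological equation
\[
\partial_s f_s(Y)+df_s(Y)\bigl([\xi_s(Y),Y]\bigr)=0 .
\]
Given this, let $\Phi_s$ be the flow of $v_s$, lifted to $K$ by solving $\dot g_s=\xi_s(\Ad g_s Y)\,g_s$ with $g_0=e$. Then $g_s=g_s(Y)\in K$ depends real-analytically on $Y$, and $\Ad g_1(Y)(Y)=\Phi_1(Y)$. By construction $f_s(\Phi_s(Y))$ is independent of $s$, so $A(\exp(\Ad g_1(Y)Y))=\pi(Y)$. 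This is property (2) with $\Psi=g_1$.

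Property (1) follows from the structure of the flow. For fixed $X$, the map $k\mapsto\Psi(\Ad k X)k$ is the time-one map of the non-autonomous flow on $K$ given by $\dot k=\xi_s(\Ad k X)k$. Indeed, the lift through $\Ad k X$ composed with right multiplication by $k$ is exactly the flow on $K$ starting at $k$. A flow of a smooth vector field on the compact manifold $K$ is a diffeomorphism, with inverse obtained by running the flow backwards. So the real content is solving the homological equation analytically.

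Away from $\Ad K(\fa)$ in the tangent sense, i.e.\ at points where $\xi\mapsto df_s(Y)[\xi,Y]$ is onto $\fa$, one can solve for $\xi_s$ pointwise. Choosing the minimal-norm solution with respect to the Killing form then gives analytic dependence. The main obstacle is the degeneracy locus. This is where $Y$ lies in (a conjugate of) $\fa$, or is singular, so that the orbit map degenerates.

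There $\partial_s f_s$ vanishes, because $f_s|_{\fa}=\mathrm{id}$ and $f_s$ has the same $M'$-equivariance. I would use a division (Malgrange/analytic preparation) argument near these points. The first step is to compute the Hessians: both $k\mapsto\langle H,f_s(\Ad kY)\rangle$ and its $s=0$ version have critical set $M'$-cosets and are Morse--Bott. By Kostant convexity for $f_1$, and the linear case for $f_0$, the transversal Hessians are comparable uniformly in $s$.

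Then write $\partial_s f_s(\Ad k H)$, for $H\in\fa$ and $k$ near $M'$, as vanishing to second order in the normal coordinates $\fk/\fm$. The map $df_s(\cdot)[\xi,\cdot]$ vanishes only to first order, with nondegenerate leading term $\xi\mapsto$ (the Hessian applied to $\xi$). So the quotient extends analytically.

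The singular orbits ($H$ on walls) need the same argument done uniformly with root multiplicities. This requires a careful $W$-equivariant choice of $\xi_s$. I expect this uniformity near the walls, and global patching via $K$-equivariance, to be the hardest step. Duistermaat handled it by exploiting that $\xi_s$ can be chosen $K$-equivariant, $\xi_s(\Ad kY)=\Ad k\,\xi_s(Y)$, which reduces everything to a Weyl-invariant problem on $\fa$.
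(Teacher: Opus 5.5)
The paper does not prove this statement; it quotes it from \cite{Dui84}. So your proposal has to stand on its own, and it has a genuine gap.

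\textbf{What is correct.} Your reduction is sound. Suppose a real-analytic $\xi_s:\fp\to\fk$ solves $\partial_s f_s(Y)+df_s(Y)([\xi_s(Y),Y])=0$ on $[0,1]\times\fp$. Then $f_s(\Ad g_s(Y)\,Y)$ is constant in $s$, which gives (2) with $\Psi=g_1$. Also $k\mapsto \Psi(\Ad k(X))k$ is the time-one map of $\dot k=\xi_s(\Ad k(X))k$ on $K$, which gives (1).

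\textbf{The gap.} The existence of such a $\xi_s$ is the entire theorem, and your sketch of it is wrong in its specifics.

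\emph{The degeneracy locus is misidentified.} Let $L_{s,Y}:\xi\mapsto df_s(Y)([\xi,Y])$. By the critical-set description in Proposition \ref{prop:dkvCosets}, its cokernel is exactly $Z_\fa(Y)=\{H\in\fa:[H,Y]=0\}$ for every $s$. So the degeneracy locus is $\bigcup_{0\neq H\in\fa}Z_\fp(H)$. This set contains regular elements outside $\fa$. Moreover, ``a conjugate of $\fa$'' is vacuous, since $\fp=\Ad K(\fa)$.

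\emph{$\partial_s f_s$ does not vanish on that locus off $\fa$.} Take $G=\mathrm{SL}(3,\BR)$ with $N$ upper triangular, and $Y=E_{12}+E_{21}$. Then $Y$ commutes with $H_0=\mathrm{diag}(1,1,-2)$ and $\pi(Y)=0$, but $A(\exp sY)=\tfrac12\log\cosh(2s)\,\mathrm{diag}(-1,1,0)$. What actually holds is the weaker compatibility condition $\partial_s f_s(Y)\perp Z_\fa(Y)$. This needs its own argument, for example that $\langle H,A(\cdot)\rangle$ restricts to a character of the Levi subgroup $Z_G(H)$.

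\emph{Analytic dependence is not established.} Even granting pointwise solvability, the minimal-norm solution involves $(L_{s,Y}L_{s,Y}^*)^{-1}$. The corank of $L_{s,Y}L_{s,Y}^*$ jumps along a stratified set, from $1$ up to $r$ at $Y=0$. Your order-of-vanishing count is done only near $\fa$ with regular $H$. The following are all left open, and this is precisely where the content of the theorem lies:
\begin{enumerate}
\item the strata near the walls, where the critical manifolds $Z_K(H)wZ_K(X)$ change dimension;
\item the limit $Y\to 0$, which after rescaling $f_s(Y)=s^{-1}f_1(sY)$ is the same as $s\to0$;
\item global real analyticity on the non-compact $\fp$, since analytic local solutions cannot simply be glued with a partition of unity.
\end{enumerate}

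\textbf{The proposed fix cannot work.} Suppose $\xi_s(\Ad k\,Y)=\Ad k\,\xi_s(Y)$. Then the vector field $[\xi_s(Y),Y]$ is $K$-equivariant, so its flow commutes with $\Ad K$. Hence $\Phi_1(\Ad k\,X)=\Ad(k\,g_1(X))X$. Property (2) would then force $k\mapsto A(\exp \Ad k(X))$ to be a right translate of $k\mapsto\pi(\Ad k(X))$.

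This already fails for $\mathrm{SL}(2,\BR)$ with $N$ upper triangular and $\fa\cong\BR$ via $\mathrm{diag}(x,-x)\mapsto x$. Parametrize the orbit of $X=\mathrm{diag}(r,-r)$ by $Y=\begin{pmatrix} r\cos\theta & r\sin\theta\\ r\sin\theta & -r\cos\theta\end{pmatrix}$. Then $\pi(Y)=r\cos\theta$, while $A(\exp Y)=-\tfrac12\log(\cosh 2r-\cos\theta\,\sinh 2r)$. A right translate here is a rotation in $\theta$, and the maxima of both functions sit at $\theta=0$, so the rotation would have to be trivial. But at $\theta=\pi/2$ the first function is $0$ and the second is $-\tfrac12\log\cosh 2r\neq 0$.

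In fact $f_s$ is only $M$-invariant, not $K$- or even $W$-equivariant. So the problem does not reduce to a Weyl-invariant problem on $\fa$.

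\textbf{A smaller point.} Kostant's convexity theorem describes images and gives no Hessian information. The transversal nondegeneracy you invoke would have to come from the explicit Duistermaat--Kolk--Varadarajan Hessian computation at the clean critical manifolds.

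As written, the proposal is a correct reduction plus an unproved, and partly misdescribed, core step.
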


\begin{theorem}\label{thm: Weyl Harish-Chandra symbol 2}
Let $c'$ and $E(c';X,\nu)$ be as above, and suppose that $X\in \fa$ and $\nu\in B^*$.
There exist coefficients $b_{w}(X,\nu)$, smooth in $\nu$ on a neighborhood of $B^*$, such that
\begin{equation}\label{eq:target 2}
 E(c';X,\nu)=\sum_{w\in W}e^{i\langle\nu,wX\rangle}
                      b_{w}(X,\nu),
\end{equation}
and for every multi-index $\gamma$, $X\in \fa$, and $\nu\in B^*$,
\begin{equation}\label{eq:symbol 2}
    |\partial_\nu^\gamma b_{w}(X,\nu)|\lesssim \prod_{\alpha\in\Delta^+}(1+|\alpha(X)|)^{-m_\alpha/2},
\end{equation}
where the implied constant depends only on $\gamma$, $B^*$, and $c'$.
\end{theorem}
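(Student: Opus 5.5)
We prove Theorem \ref{thm: Weyl Harish-Chandra symbol 2} directly, by stationary phase on $K$ combined with a localization adapted to the walls of $\fa$. By Weyl invariance, $E(c';wX,\nu)=E(c'_w;X,\nu)$ with $c'_w(\nu,k)=c'(\nu,kw)$, so we may assume $X\in\overline{\fa^+}$. For $|X|\lesssim 1$ the claim is trivial: take $b_e=e^{-i\langle\nu,X\rangle}E$ and $b_w=0$ for $w\ne e$. For large $|X|$ we write $X=|X|\hat X$ with $\hat X$ in the closed unit sphere of $\overline{\fa^+}$. The phase is
\[
\phi(k)=\langle\nu,\Ad k(\hat X)\rangle ,
\]
with large parameter $|X|$. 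On $\fp$ this is $\langle H_\nu,\Ad k\,\hat X\rangle$, with $H_\nu\in\fa^+$ regular. Its critical set is $\{k:\Ad k\,\hat X\in\fa\}=\bigcup_w wK_{\hat X}$, where $K_{\hat X}$ is the centralizer of $\hat X$; this set is a manifold when $\hat X$ is singular.

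The plan has three steps. (1) Compute the Hessian at $k=w m$, $m\in M$. For $Z\in\fk\ominus\fm$ decomposed along root vectors $Z_\alpha=Y_\alpha+\theta Y_\alpha$, the second variation is $-\sum_\alpha \alpha(w\hat X)\alpha(H_\nu)|Z_\alpha|^2$ up to constants. Each root direction therefore contributes a factor $(|X|\,|\alpha(\hat X)|)^{-1/2}=|\alpha(X)|^{-1/2}$, and there are $m_\alpha$ such directions. This is the source of the exponent $m_\alpha/2$. (2) Handle the non-uniformity near the walls by a partition of unity. Let $\Delta_X^0=\{\alpha:|\alpha(X)|\le C\}$. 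These roots generate a subsystem, with associated Levi subgroup $L_X$ and compact part $K_L=K\cap L_X$. Decompose $K$ into neighborhoods of the cosets $wK_L$ and their complement. On the complement, $|\nabla\phi|\gtrsim$ the minimum of $|\alpha(\hat X)|$ over $\alpha\notin\Delta_X^0$, and integration by parts gives rapid decay $\prod(1+|\alpha(X)|)^{-N}$ in the non-small roots. This decay suffices because $\prod_{\alpha\in\Delta^0_X}(1+|\alpha(X)|)^{-m_\alpha/2}\asymp1$. Near $wK_L$, write $k=w\,\exp(Z)\,\ell$ with $\ell\in K_L$ and $Z\in\fk\ominus\fk_L$. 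Apply stationary phase with parameters in $Z$ only. In the transverse directions the phase is nondegenerate with eigenvalues $\asymp\alpha(X)$, $\alpha\notin\Delta_X^0$, and these eigenvalues are large. The remaining integral over $\ell\in K_L$ has phase $\langle\nu,\Ad\ell\, X\rangle=\langle\nu,X\rangle$ plus a bounded perturbation, since the small roots act trivially up to $O(C)$. This yields $e^{i\langle\nu,wX\rangle}$ times an amplitude bounded by $\prod_{\alpha\notin\Delta_X^0}|\alpha(X)|^{-m_\alpha/2}$. (3) Differentiation in $\nu$: the phase is linear in $\nu$. After factoring out $e^{i\langle\nu,wX\rangle}$, each $\partial_\nu$ brings down $\langle\cdot,\Ad k X-wX\rangle$. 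On the support where stationary phase localizes, $|Z|\lesssim|\alpha(X)|^{-1/2}$ and $\Ad k X-wX=O(|Z|^2|X|)+O(C)$. Hence these factors are absorbed, using the standard symbol calculus for stationary phase with degenerate scaling, and no powers of $|X|$ are lost.

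The main obstacle is uniformity in step (2). The partition, the choice of the Levi subsystem, and the stationary phase constants must all be uniform as $X$ moves between strata, and the cutoffs must be scaled by $|\alpha(X)|^{-1/2}$. For the cutoff I would first try a smooth dyadic partition in the quantities $\alpha(X)$, mirroring the dyadic decomposition used later. A cleaner alternative, which I would attempt first, is Duistermaat's reduction (Theorem above) combined with an induction on rank via Levi subgroups, following Marshall's local coordinates on $K$ near $M'$. In that route, stationary phase in the directions of the large roots reduces to an integral of the same type over $K_L$ with $X$ effectively bounded. Finally, the coefficients $b_w$ are defined by assigning each localized piece to its coset $w$, so that smoothness in $\nu$ near $B^*$ is inherited from $c'$.
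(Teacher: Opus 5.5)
Your reduction to $X\in\overline{\fa^+}$, the trivial case $|X|\lesssim 1$, and the Hessian computation in step (1) are fine, and they correctly explain where the exponent $m_\alpha/2$ comes from. But the content of the theorem is the \emph{uniformity} of stationary phase as $X$ degenerates toward the walls. You explicitly leave this open, and steps (2) and (3) as written do not supply it.

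The problem is that the ``large'' roots need not be comparable to $|X|$. Already in rank two you can have $\Delta^0_X=\emptyset$ (so $K_L=M$), $\beta_1(X)\asymp|X|$, and $C\ll\beta_2(X)=s\ll|X|^{1/2}$.
\begin{itemize}
\item \emph{The complement.} Near the coset of the centralizer $K_{\hat X_0}$ of the nearby wall point $\hat X_0$, but away from $wM$, the full phase $|X|\phi$ has gradient only $\asymp s$ and transverse Hessian $\asymp|X|$. A naive integration by parts therefore produces a factor $|X|/s^2\gg 1$ instead of a gain. The claimed decay on the complement does not follow from your gradient lower bound.
\item \emph{Near $wM$.} The transverse Hessian has eigenvalues ranging from $s$ to $|X|$, and the standard stationary phase lemma is not uniform in that ratio.
\item \emph{Step (3).} The bound $|Z|\lesssim|\alpha(X)|^{-1/2}$ together with $\Ad kX-wX=O(|Z|^2|X|)$ only gives $O(|X|/s)$ for $\alpha=\beta_2$, which is unbounded. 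Moreover, stationary phase does not localize the support, so the growing factors produced by $\partial_\nu$ must also be controlled outside the Gaussian region.
\end{itemize}
Fixing this needs either a genuine multiscale induction or a uniform normal form. The induction would do Morse--Bott stationary phase in the fastest variables with the slower ones as parameters, uniformly in the scale ratios, and would control the resulting critical value. Neither is provided.

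The paper avoids any such partition. It rescales with $t=\|X\|$ and $H=X/\|X\|$, and uses Duistermaat's map (with $\ell=\Psi(\Ad k(H))k$) to rewrite $E(c';X,\nu)$ as $I_t(c;H,\nu)$. This reduces the statement to Theorem~\ref{thm: Weyl Harish-Chandra symbol}. There, uniformity comes from Marshall's normal form, Theorem~\ref{thm:simon47}. In blow-up coordinates $u$ along the flag of coweight subspaces, the phase becomes exactly
\[
\phi_S-\sum_i\langle\nu,\alpha_i\rangle\,\alpha_i(w\pi(u))\,z_i^2+Q(u)L(z'),
\]
on finitely many neighbourhoods whose size does not depend on how singular $X$ is. The roots missing from the quadratic part satisfy $|\alpha(w\pi(u))|\lesssim|Q(u)|$, so the nonstationary phase decay in $t|Q(u)|$ covers them.

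The paper then adds the key observation $\phi_S(u,\nu)-\langle\nu,w\pi(u)\rangle=R(u,\nu)Q(u)$. As a result, after factoring out $e^{it\langle\nu,w\pi(u)\rangle}$, the $\nu$-derivatives only produce factors $tQ(u)$, which that decay absorbs.

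Your ``cleaner alternative'' names the right ingredients, namely Duistermaat and Marshall's coordinates. To turn it into a proof you need exactly these facts, and in particular the divisibility of the critical-value error by $Q(u)$, since that is what gives the $\nu$-derivative bounds.
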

\begin{proof}
    If $\|X\|\geq1$, we let $t=\|X\|$ and $H= X/\|X\|$. Let $\Psi:\fp\to K$ be the map in Duistermaat's theorem. Therefore, by letting $\ell=\Psi(\Ad k(H))k$, we have
    \begin{align*}
        E(c';X,\nu)=\int_K e^{it\langle\nu,\Ad k H\rangle}
                  c'(\nu,k)dk=\int_K e^{it\langle\nu,A(\ell e^H)\rangle}
                  c(H,\nu,\ell)d\ell
    \end{align*}
    for some new amplitude $c$. Then the theorem follows from Theorem \ref{thm: Weyl Harish-Chandra symbol} immediately.

    If $\|X\|\leq1$, then the same proof as above without rescaling can be applied.
\end{proof}

The above results are similar to \cite[Theorem 1.5]{marshall2016p}, but for our purposes we need a bound for the $\nu$-derivatives rather than the $X$-derivatives. Before proving Theorem \ref{thm: Weyl Harish-Chandra symbol} let us review the geometry involved. Let \[
\phi (X, \nu, \ell) := \langle \nu, A(\ell \exp X) \rangle,\qquad X\in\fa,\nu\in\fa^*,\ell\in K,
\]
be the phase in $I_t$. When we speak of the critical set we will always mean $D_\ell \phi = 0$. For generic values of $X$ and $\nu$ the critical set is precisely $M'$, with the critical values $\langle \nu, wX \rangle$ being constant on the cosets $wM$. As $X$ and $\nu$ become singular, the values $\langle \nu, wX \rangle$ coincide, the gradient in the regions between the connected components vanishes, and they merge into a smaller number of higher-dimensional components. For a fixed $X$ and $\nu$ the situation is summarized by the following proposition:

\begin{proposition}[\cite{DKV83} Proposition 5.4]\label{prop:dkvCosets}
    The critical set of $\phi(X,\nu,\ell)$ is a disjoint union 
    \[
    \coprod_{w \in W_\nu \backslash W / W_X} Z_K(\nu) w Z_K(X)
    \]
    where $W = M'/M$ and $W_H = \{ w \in W : wH = H \}$.
\end{proposition}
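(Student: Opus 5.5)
The plan is to compute the differential of $\ell\mapsto\phi(X,\nu,\ell)$ explicitly, identify the critical condition with a commutation condition in $\fp$, and then solve that condition by conjugating into $\fa$.

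\textbf{Step 1: the differential.} Fix $Y\in\fk$ and perturb on the right: $\ell\exp(sY)\exp X=\ell\exp X\cdot\exp(sZ)$ with $Z=\Ad(e^{-X})Y$. Write $\ell\exp X=n\,a\,k$ in Iwasawa coordinates, with $k=k(\ell e^X)$. Then
\[
A(\ell\exp(sY)\exp X)=\log a+A(k\exp(sZ)),
\]
and $\frac{d}{ds}A(k\exp(sZ))|_{s=0}$ is the $\fa$-component of $\Ad(k)Z$ in $\fg=\fn+\fa+\fk$. That component equals the orthogonal projection onto $\fa$ with respect to the Killing form, since $\fn$ and $\fk$ are orthogonal to $\fa$. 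Identify $\nu$ with $H_\nu\in\fa$ and use invariance of the Killing form. The critical condition becomes
\[
\langle \Ad(e^{X})\Ad(k^{-1})H_\nu,\,Y\rangle=0\quad\text{for all }Y\in\fk .
\]
Put $P=\Ad(k^{-1})H_\nu\in\fp$. Then $\Ad(e^X)P=\cosh(\operatorname{ad}X)P+\sinh(\operatorname{ad}X)P$, and the $\fk$-part is $\sinh(\operatorname{ad}X)P$. Since $\operatorname{ad}X$ is diagonalizable with real eigenvalues and $\sinh$ vanishes only at $0$, the condition is equivalent to $[X,P]=0$, i.e.\ $P\in\fz_\fp(X)$.

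\textbf{Step 2: solving the condition.} The group $Z_K(X)$ acts on $\fz_\fp(X)$, which is the $\fp$-part of a reductive symmetric pair, and $\fa$ is maximal abelian there. So $P=\Ad(z)H'$ with $z\in Z_K(X)$ and $H'\in\fa$. Then $H'$ is $K$-conjugate to $H_\nu$, hence $H'=\Ad(w^{-1})H_\nu$ for some $w\in M'$. This gives $k(\ell e^X)\in Z_K(\nu)\,w\,Z_K(X)$. The converse is straightforward: every element of such a double coset satisfies $[X,P]=0$ by the same computation.

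\textbf{Step 3: from $k(\ell e^X)$ to $\ell$ (the expected main obstacle).} It remains to pass from this condition on $k(\ell e^X)$ to the statement $\ell\in Z_K(\nu)wZ_K(X)$.

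First, the right factor: since $Z_K(X)$ commutes with $e^X$, we have $k(\ell z e^X)=k(\ell e^X)z$, so we may absorb $z$ and assume $k(\ell e^X)\in Z_K(\nu)w$. Next, replace $X$ by $wX$ via $\ell e^X=\ell w^{-1}e^{wX}w$. This reduces the problem to showing that
\[
\ell' e^{X'}\in N A\, Z_K(\nu)
\]
forces $\ell'\in Z_K(\nu)$ modulo right factors in $Z_K(X')$.

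To prove this, I would also use the left-perturbation derivative. The map $\ell\mapsto\phi$ is left-$M$-invariant, and the same Iwasawa computation with $\exp(sY)\ell$ gives a second critical condition. If that is insufficient, I would fall back on the DKV argument: consider the function $\ell\mapsto\langle H_\nu,A(\ell e^{X'})\rangle$ restricted to $Z_K(\nu)$-orbits, and use Kostant's convexity / the Bruhat-type decomposition of $K$ with respect to the parabolic subgroups determined by $\nu$ and $X$ to identify the fibre.

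\textbf{Step 4: disjointness.} Disjointness of the union follows from the fact that $Z_K(\nu)\backslash K/Z_K(X)$ restricted to $M'$ is indexed by $W_\nu\backslash W/W_X$, which is the standard Bruhat-type double coset statement.
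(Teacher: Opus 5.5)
The paper gives no proof of this statement; it quotes \cite{DKV83}, so your argument has to stand on its own. Your Steps 1 and 2 are correct. The right-perturbation computation shows that $\ell$ is critical iff $[X,\Ad(k^{-1})H_\nu]=0$ with $k=k(\ell e^X)$. Equivalently, $k(\ell e^X)\in Z_K(\nu)\,w\,Z_K(X)$ for some $w\in M'$. But this is a condition on the Iwasawa $K$-component of $\ell e^X$, not on $\ell$. Passing from it to $\ell$ (your Step 3) is where the whole content of the proposition lies, and you do not carry it out; you only list strategies you might try. Two further points:
\begin{itemize}
\item The left-perturbation derivative is not a ``second critical condition.'' Since $e^{sY}\ell=\ell\,e^{s\Ad(\ell^{-1})Y}$, it is an equivalent reformulation of the same condition, although it happens to be the more convenient one.
\item The inclusion of $Z_K(\nu)wZ_K(X)$ in the critical set is never established at the level of $\ell$. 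Your ``converse'' in Step 2 concerns $k(\ell e^X)$. Using it requires knowing that $k(c\,e^{Y})\in Z_K(\nu)$ for $c\in Z_K(\nu)$ and $Y\in\fa$.
\end{itemize}

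The missing ingredient is the Levi subgroup $L_\nu=Z_G(H_\nu)$. It contains $A$ and $Z_K(\nu)=K\cap L_\nu$, and it has an Iwasawa decomposition $L_\nu=(N\cap L_\nu)\,A\,Z_K(\nu)$ compatible with that of $G$, with respect to the positive system $\Delta^+\cap\{\alpha:\langle\alpha,\nu\rangle=0\}$. This gives $NAZ_K(\nu)=NL_\nu$, a set that is right-invariant under $e^{X'}\in L_\nu$. So in your reduced claim, $\ell'e^{X'}\in NAZ_K(\nu)$ forces $\ell'\in NL_\nu\cap K$. Uniqueness of the Iwasawa decomposition then gives $\ell'\in Z_K(\nu)$, hence $\ell\in Z_K(\nu)wZ_K(X)$.

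The same fact gives the reverse inclusion. If $\ell=cwz$, then $\ell e^X=(c\,e^{wX})\,wz$ with $c\,e^{wX}\in L_\nu$, so $k(\ell e^X)\in Z_K(\nu)wZ_K(X)$, and $\ell$ is critical by Step 2.

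Alternatively, you can actually carry out the left perturbation. Writing $\ell e^X=nak$, the derivative is $\langle \Ad(n)H_\nu-H_\nu,Y\rangle$. Here $\Ad(n)H_\nu-H_\nu\in\fn$, and an element of $\fn$ orthogonal to $\fk$ lies in $\fn\cap\fp=0$. So $\ell$ is critical iff $n\in L_\nu$, i.e.\ iff $\ell e^X\in L_\nu K$. The Cartan decomposition $L_\nu=Z_K(\nu)\exp(\fa)Z_K(\nu)$ and uniqueness of the $KAK$ decomposition then give $\ell\in Z_K(\nu)wZ_K(X)$ directly.

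Finally, for disjointness in Step 4, the fact you actually need is that two elements of $\fa$ conjugate under $Z_K(\nu)$ are conjugate under $W_\nu$. This is standard, but it should be stated explicitly.
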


In order to prove Theorem \ref{thm: Weyl Harish-Chandra symbol} we require a more quantitative version of the above fact. This is supplied by  \cite[Theorem 4.7]{marshall2016p}, which gives a local change of coordinates making $\phi$ into a quadratic polynomial in $\ell$, whose coefficients depend in an explicit way on $X$. We restate the theorem here with some notational simplifications; in particular, we replace Marshall's complexified spaces with real subspaces (see \cite[Remark 4.8]{marshall2016p}) and subsume \cite[Proposition 4.22]{marshall2016p} into a special case of \cite[Theorem 4.7]{marshall2016p}.

\begin{proof}[Proof of Theorem \ref{thm: Weyl Harish-Chandra symbol}]

Fix an ordering of the simple roots, $\beta_1,\dots,\beta_r$. Define a cone
\[
\cC = \{ X \in \fa : \beta_1(X) \geq \dots \geq \beta_r(X) \geq 0 \}.
\]
By partitioning $B$ and using Weyl invariance, we may assume without loss of generality that $B \subset \cC$.
(This partitioning poses no problems because we do not actually need $c$ to be smooth in $X$, rather, we only need derivatives in $\nu$ and $\ell$ to be bounded with respect to $X$).   
    
Fix basepoints $\ell^{\circ} \in K, X^\circ \in \cC, \nu^\circ \in B^*$. Define $\fa_{\ell^{\circ}} = \{ H \in \fa : \Ad_{\ell^{\circ}} H \in \fa \}$.  Then $\ell^{\circ} = w \ell_0$ for some $\ell_0 \in Z_K(\fa_{\ell^\circ})$ and $w \in M'$. Note that $D_{\ell} \phi( X, \nu, \ell^{\circ}) = 0$ if and only if $X \in \fa_{\ell^{\circ}}$. These facts can be found in \cite[Lemmas 4.1 and 4.4]{marshall2016p}.

    Let $\hat{\beta}_1,\dots,\hat{\beta}_r \in \fa$ be the simple coweights, i.e. $\beta_i(\hat{\beta}_j) = \delta_{ij}$. Define a flag $\{0\} = V_0 \subset \dots \subset V_r$ by $V_i = \langle \hat{\beta_1}, \dots, \hat{\beta}_i \rangle$. Introduce a new space $\cA \cong \mathbb{R}^r$, which will serve as a subset of the blowup of $\fa$ along the $V_i$. Given a point $u = ({u}_{0}, \dots, {u}_{r-1}) \in \cA$ we define the blowdown map $\pi : \cA \to \fa$ by 
    \[
    \pi(u) = \sum_j  u_0 \dots u_{j-1} \hat{\beta}_j.
    \]
    We also define a right inverse $\iota : \cC \to [0,\infty) \times [0,1]^{r-1}$ by 
    \begin{align}\label{eq:lift}
    \iota(X)_i &= 
    \begin{cases}
    \beta_1(X) & i = 0
           \\
    \frac{\beta_{i+1}(X)}{\beta_i(X)} & 1 \leq i \leq r-1
    \end{cases}
    \end{align}
    with the convention that $0/0=1$ and set $u^\circ = \iota(X^\circ)$. (Note that we do not even need $\iota$ to be continuous).

    Let $0 \leq p \leq r$ be the largest index such that $V_p \subset \fa_{\ell^{\circ}}$ and define $Q(u) = {u}_{0} \dots {u}_{p}$ if $p < r$, or $Q = 0$ if $p=r$. Define $q$ by 
    \[
    q := 
    \begin{cases}
    r & p = r
        \\ 
    \max \{ i \leq p : u^\circ_i = 0 \} & p < r, Q(u^\circ) = 0
        \\
    0 & p < r, Q(u^\circ) \neq 0
    \end{cases}
    .
    \]
    Let $\alpha_1, \dots, \alpha_d$ be an enumeration of the positive roots, including multiplicity, that are non-vanishing on the subspace $w V_q$.

    We should explain what the 3 cases for $q$ mean: in that order, they correspond to whether $\ell^\circ$ is always, sometimes, or never a critical point, as we vary $X$. The list of roots is correspondingly all roots, some roots, or empty. 

    \begin{theorem}[\cite{marshall2016p} Theorem 4.7 and Proposition 4.22]\label{thm:simon47}
        For all $u^\circ \in [0,\infty)\times [0,1]^{r-1}, \nu^{\circ} \in B^*, \ell^{\circ} \in K$, there exists an analytic map $f_{u,\nu}(\ell) = f(u,\nu,\ell)$ to $\BR^{\dim K}$, defined in an open neighborhood of $(u^{\circ}, \nu^{\circ}, \ell^{\circ})\in\cA \times \fa^* \times K$ with the following properties. We have $f_{u^{\circ},\nu^{\circ}}(\ell^{\circ}) = 0$ and $f_{u,\nu}$ is a local embedding for all $(u,\nu)$. Write $z = f_{u,\nu}(\ell)$. Then
        \[
        \phi(\pi(u), \nu, \ell) 
        = 
        \phi_S(u, \nu) - 
        \sum_{i=1}^{d}
        \langle \nu, \alpha_i \rangle 
        \alpha_i(w\pi(u)) 
        {z}_{i}^2 
        + Q(u) L({z}_{d+1},\dots,{z}_{\dim K}) 
        .
        \]
        where $\phi_S$ is some analytic function, and $L$ is a non-constant affine linear function (unless $Q=0$). 
    \end{theorem}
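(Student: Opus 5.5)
The plan is to follow Marshall's argument \cite[Section 4]{marshall2016p}. Concretely, we reduce to a parametrized splitting (Morse--Bott) lemma for the phase, written in the blown-up coordinates $u$ on $\cA$. The first step is to rewrite the phase in a $K$-equivariant form. Since $\ell\exp X=\exp(\Ad_\ell X)\,\ell$ and $A(gk)=A(g)$, we have $\phi(X,\nu,\ell)=\langle\nu,A(\exp\Ad_\ell X)\rangle$. Next we write $\ell=\ell^\circ\exp(Y)$ with $Y$ near $0$ in $\fk$, split $\fk=\fk_0\oplus\fk_1$, and use $\ell^\circ=w\ell_0$ with $\ell_0\in Z_K(\fa_{\ell^\circ})$. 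This reduces the computation near $\ell^\circ$ to a Taylor expansion in $Y$ around the Weyl element $w$, applied to $\Ad_{\ell_0}X$. We then decompose
\[
X=\pi(u)=\sum_{j\le q}u_0\cdots u_{j-1}\hat\beta_j+\sum_{j>q}u_0\cdots u_{j-1}\hat\beta_j .
\]
The first sum lies in $V_q\subset\fa_{\ell^\circ}$, and each term of the second sum carries a factor of $u_0\cdots u_q$. When $q=p<r$ this factor is $Q(u)$.

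The second step is to identify the critical part. The Hessian of $\langle\nu,A(\exp \Ad_k H)\rangle$ at $k\in wM$ is classical (\cite{DKV83}). In root-space coordinates $Y_\alpha$ it is $-\sum_\alpha\langle\nu,\alpha\rangle\alpha(wH)|Y_\alpha|^2$, up to normalizing constants. Only the roots nonvanishing on $wV_q$ contribute at the scale of the $V_q$-component, and these are $\alpha_1,\dots,\alpha_d$. For $\alpha_i$ in this list, the blowup gives the factorization
\[
\alpha_i(w\pi(u))=u_0\cdots u_{j_i-1}\,e_i(u),
\]
where $j_i\le q$ and $e_i$ is analytic. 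Because $B\subset\cC$ and the $u_i\le 1$ control all later terms, $e_i$ is nonvanishing near $u^\circ$. This is the point of passing to $\cA$: after dividing by the monomial, the Hessian restricted to the first $d$ directions is uniformly nondegenerate, with coefficients analytic in $(u,\nu)$. The remaining directions $Y''$ correspond to roots vanishing on $wV_q$. For these, $\phi$ depends on $Y''$ only through the components of $X$ outside $V_q$, so after subtracting the critical value $\phi_S(u,\nu)$ the $Y''$-dependence comes with a factor $Q(u)$, or vanishes identically if $p=r$. By the characterization of critical points ($D_\ell\phi=0$ iff $X\in\fa_{\ell^\circ}$, \cite[Lemma 4.4]{marshall2016p}), its $Y''$-gradient after removing the $Q(u)$ factor is nonzero when $\ell^\circ$ is not critical.

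The third step is to apply the splitting lemma with analytic parameters $(u,\nu)$. First we solve $\partial_{Y'}\phi=0$ for $Y'=Y'(u,\nu,Y'')$ by the implicit function theorem, using the rescaled nondegenerate Hessian. The Morse lemma with parameters then gives new coordinates $z_1,\dots,z_d$ in which the $Y'$-part is exactly $-\sum_i\langle\nu,\alpha_i\rangle\alpha_i(w\pi(u))z_i^2$. Here we absorb $e_i$ and the square roots of the normalizing constants into $z_i$; we keep the monomial rather than rescaling it away, so the coordinate change stays analytic and uniform. What remains is $\phi_S(u,\nu)+Q(u)R(u,\nu,Y'')$ with $\nabla R\neq0$. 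Taking $z_{d+1}:=R$ as a new coordinate, and the other $z_j$ arbitrary complementary analytic coordinates, makes this term affine linear. This yields the stated normal form, with $f_{u^\circ,\nu^\circ}(\ell^\circ)=0$ after translating.

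The main obstacle is the uniformity in the second step as $u$ approaches the boundary of $[0,\infty)\times[0,1]^{r-1}$. One must check that the cross terms between $Y'$ and $Y''$, and the higher-order Taylor terms, are divisible by the same monomials as the diagonal Hessian entries, so that the rescaled system stays analytic and nondegenerate. I would attempt this by expanding $\Ad_{\exp Y}$ root by root and tracking, for each root, the smallest $j$ with $\alpha(\hat\beta_j)\ne0$. If that bookkeeping fails in some degenerate configuration, I would fall back on quoting \cite[Theorem 4.7, Proposition 4.22]{marshall2016p} directly, which is what the statement does.
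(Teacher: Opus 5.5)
The paper does not prove this statement. It imports it from \cite[Theorem 4.7, Proposition 4.22]{marshall2016p} and only translates notation: real subspaces replace Marshall's complexified ones (his Remark 4.8), and Proposition 4.22 is read as the case $p=r$, $Q\equiv 0$. Your fallback is therefore exactly the paper's route.

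The self-contained argument you sketch, however, has a genuine gap at precisely the step you flag, and the bookkeeping you propose cannot close it. To run the implicit function theorem and the Morse lemma uniformly up to the divisors $\{u_i=0\}$, consider a root direction $Y_\gamma\in\fk_\gamma=\fk\cap(\fg_\gamma+\fg_{-\gamma})$ whose first nonzero coweight pairing is $\gamma(\hat\beta_j)$. You need $\partial_{Y_\gamma}\phi$ to be divisible by $u_0\cdots u_{j-1}$ as an analytic function of $(u,\nu,Y)$, not merely at $Y=0$. Your claim that the $Y''$-dependence carries a factor $Q(u)$ needs an identity of the same kind. The only source of such vanishing is the invariance $\phi(X,\nu,\ell k)=\phi(X,\nu,\ell)$ for $k\in Z_K(X)$, combined with $\pi(u)\in V_i$ when $u_i=0$. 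In exponential coordinates $\ell=\ell^\circ\exp(Y)$ this invariance does not give vanishing of $\partial_{Y_\gamma}\phi$. The reason is that $\frac{d}{ds}\exp(Y+sZ)|_{s=0}=\exp(Y)\bigl(Z-\tfrac12[Y,Z]+\cdots\bigr)$, and the commutators point in directions along which $\phi$ varies.

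For example, take $\fsl(3,\BR)$ and $\ell^\circ=e$. On $\{u_1=0\}$ we have $X=u_0\hat\beta_1$ and $\exp(\fk_{\beta_2})\subset Z_K(X)$. Nevertheless, the leading term of $\partial_{Y_{\beta_2}}\phi$ at $Y$ is $-\tfrac12\mathrm{Hess}\,\phi\bigl(Y,[Y,Y_{\beta_2}]\bigr)$. For $Y=\epsilon(Y_{\beta_1}+cY_{\beta_1+\beta_2})$ this is a nonzero multiple of $\epsilon^2c\,u_0\langle\nu,\beta_2\rangle$. So the rescaled system in your third step is not analytic, and a root-by-root expansion of $\Ad_{\exp Y}$ will only confirm this.

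What is missing is a coordinate system in which the invariance becomes exact divisibility. One choice is ordered product coordinates $\ell=\ell^\circ\exp(Y^{(1)})\cdots\exp(Y^{(r)})\exp(Y^{(\fm)})$, where $Y^{(j)}$ ranges over the $\fk_\gamma$ with $\gamma|_{V_{j-1}}=0\neq\gamma|_{V_j}$. If $u_i=0$, the tail $\exp(Y^{(i+1)})\cdots\exp(Y^{(\fm)})$ lies in $Z_K(V_i)\subseteq Z_K(X)$. Hence $\partial_{Y^{(j)}}\phi$ vanishes identically on $\{u_i=0\}$ for every $i<j$. It is therefore divisible by $u_0\cdots u_{j-1}$, and each mixed second derivative is divisible by the deeper of the two monomials. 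Since these monomials are nested, block Gaussian elimination preserves the divisibility pattern, which is what a parametrized Morse lemma needs. The same invariance, with $Y''$ placed last, gives divisibility of the reduced phase minus $\phi_S$ by $Q(u)$.

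Two further points need real arguments.
\begin{enumerate}
\item $\nabla R\neq0$ at $(u^\circ,\nu^\circ)$ when $p<r$ but $Q(u^\circ)=0$. There $\ell^\circ$ \emph{is} critical for $\pi(u^\circ)$, so \cite[Lemma 4.4]{marshall2016p} says nothing. You need a first-order computation in the transverse vector $\hat\beta_{p+1}+u_{p+1}\hat\beta_{p+2}+\cdots$, using that $\fa_{\ell^\circ}$ is an intersection of root hyperplanes.
\item The correct reason for $e_i(u^\circ)\neq0$. The numbers $\alpha(w\hat\beta_j)$ are the simple-root coefficients of the root $w^{-1}\alpha$, hence all of one sign, while $u^\circ_j\geq0$. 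The bound $u_j\le1$ plays no role.
\end{enumerate}
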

    We will need a slight strengthening: in the above theorem, we can replace $\phi_S$ with the phase $\langle \nu, w \pi(u) \rangle$, where $w$ is the Weyl element associated to $\ell^{\circ}$ above. 
    \begin{proposition}
        There is an analytic function $R(u,\nu)$ defined near $(u^{\circ},\nu^{\circ})$ such that
        \[
        R(u,\nu)Q(u) = \phi_S(u, \nu) - \langle \nu, w \pi(u) \rangle
        .
        \]
    \end{proposition}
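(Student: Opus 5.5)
Write $\Phi(u,\nu,z)$ for the right-hand side of Theorem~\ref{thm:simon47}, in the coordinates $z=f_{u,\nu}(\ell)$. The plan is to evaluate $\phi$ at a critical point, using two descriptions of the same critical value. One description is the explicit critical values $\langle\nu,w'X\rangle$ from Proposition~\ref{prop:dkvCosets}. The other comes from the normal form.

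\textbf{Step 1: the case $Q(u)\neq 0$.} Here $L$ is a non-constant affine function of $z_{d+1},\dots,z_{\dim K}$. So $D_z\Phi$ never vanishes, and there are no critical points nearby. This case gives no information, which is consistent with the claim: wherever $Q\neq0$ we may simply \emph{define}
\[
R=\frac{\phi_S-\langle\nu,w\pi(u)\rangle}{Q}.
\]
The content of the proposition is therefore only that this quotient extends analytically across $\{Q=0\}$.

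\textbf{Step 2: agreement on $\{Q=0\}$.} On $\{Q(u)=0\}$ the term $L$ drops out. Then $z=0$, that is $\ell=f_{u,\nu}^{-1}(0)$, is a critical point of $\phi(\pi(u),\nu,\cdot)$ with critical value $\phi_S(u,\nu)$.

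Next identify this critical value. By the structure of the coordinates, the point $\ell_{u,\nu}=f^{-1}_{u,\nu}(0)$ stays near $\ell^\circ=w\ell_0$. When $Q(u)=0$, the point $\pi(u)$ lies in $V_p\subset\fa_{\ell^\circ}$; this holds because some $u_j=0$ with $j\le p$ kills the coordinates beyond $V_p$. By Proposition~\ref{prop:dkvCosets}, the critical set is a union of double cosets $Z_K(\nu)w'Z_K(X)$, on each of which $\phi$ is constant and equal to $\langle\nu,w'X\rangle$. The critical component through $\ell_{u,\nu}$ is close to $\ell^\circ$. Since the components are disjoint and closed, this forces the component to be $Z_K(\nu)wZ_K(\pi(u))$, at least after shrinking the neighbourhood.

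It follows that
\[
\phi_S(u,\nu)=\langle\nu,w\pi(u)\rangle \quad\text{on } \{Q=0\}.
\]
The main obstacle is this identification. It has to hold uniformly, so one must check that $\ell^\circ$ lies in $Z_K(\nu^\circ)wZ_K(\pi(u))$ for every $u$ near $u^\circ$ with $Q(u)=0$. I would do this using $\ell^\circ=w\ell_0$ with $\ell_0\in Z_K(\fa_{\ell^\circ})\subset Z_K(\pi(u))$. The value is then continuous along the connected critical set, and the discrete set of possible critical values $\{\langle\nu,w'\pi(u)\rangle\}$ pins it down.

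\textbf{Step 3: division.} The function $F=\phi_S-\langle\nu,w\pi(u)\rangle$ is analytic and vanishes on the zero set of $Q=u_0\cdots u_p$, a product of distinct coordinate functions. Vanishing on $\{u_j=0\}$ for each $j\le p$ lets us divide $F$ successively by each $u_j$, using the analytic Hadamard lemma. Each successive quotient still vanishes on the remaining hyperplanes away from $\{u_j=0\}$, and hence everywhere on them by continuity. This yields an analytic $R$ with $F=RQ$.

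In the case $p=r$ we have $Q=0$ identically, and Step 2 gives $F\equiv0$ directly, so any $R$ works.
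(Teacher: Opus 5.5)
Your argument is correct in substance, but it follows a different route from the paper.

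\textbf{How the two routes differ.} The paper never looks at critical points. It opens up Marshall's proof of Theorem 4.7, where $\phi_S(u,\nu)$ is defined as $\phi(\pi(u),\nu,\ell_*(u,\nu))$ for a specific point $\ell_*$. It then cites Marshall's Corollary 4.15(d) to get $\ell_*(u,\nu)\in\ell^\circ Z_K(V_i)$ whenever $u_i=0$, and computes $A(w\ell_0\ell_1\exp X)=wX$ for $X=\pi(u)\in V_i$. The case $p=r$ is delegated to Marshall's Proposition 4.22.

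You instead use Theorem \ref{thm:simon47} only as a black box. You combine it with the observation that $\ell^\circ=w\ell_0$ is itself a critical point with value $\langle\nu,w\pi(u)\rangle$ whenever $Q(u)=0$. This buys three things:
\begin{itemize}
\item the result no longer depends on how Marshall constructed $\phi_S$;
\item the case $p=r$ is handled by the same argument;
\item your explicit Hadamard division supplies a step the paper leaves implicit.
\end{itemize}

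\textbf{The loose step.} The passage from $f_{u,\nu}^{-1}(0)$ to $\ell^\circ$ is not justified as written.
\begin{itemize}
\item The phrase \emph{disjoint and closed} needs the cosets $w'Z_K(\pi(u))$ to be separated uniformly in $u$. This is true, because $Z_K(X)$ depends only on which roots vanish at $X$, so there are finitely many possibilities. But you do not say so.
\item The phrase \emph{the discrete set of critical values pins it down} cannot work on its own. Those values may coincide; all of them do when $\pi(u)=0$.
\end{itemize}

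\textbf{Why none of this is needed.} Since $\langle\nu,\alpha_i\rangle\neq0$, at any critical point of the normal form with $Q(u)=0$ one has $\alpha_i(w\pi(u))z_i=0$ for $i\le d$. Hence $\phi=\phi_S$ at every critical point in the chart.

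For $(u,\nu)$ near $(u^\circ,\nu^\circ)$, the point $\ell^\circ$ lies in the chart. It is critical there, since $\pi(u)\in V_p\subset\fa_{\ell^\circ}$. So directly
\[
\phi_S(u,\nu)=\phi(\pi(u),\nu,\ell^\circ)=\langle\nu,w\pi(u)\rangle ,
\]
with no appeal to Proposition \ref{prop:dkvCosets} or to the point $f_{u,\nu}^{-1}(0)$.
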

    \begin{proof}
    If $p = r$, then $Q=0$. This case is covered by Proposition 4.22 in \cite{marshall2016p}. 
    
    Assume $p < r$ and fix $i \leq p$. We will show that the right hand side is divisible by ${u}_{i}$. If $u^\circ_i \neq 0$, then $u_i$ is bounded away from $0$ in the neighborhood and there is nothing to prove. 
    
    The proof of Theorem 4.7 in \cite{marshall2016p}, immediately after Equation (4.13), defines a map $\xi : \cA \times \fa^* \to \cA \times \fa^* \times K$ near $(u^{\circ},\nu^{\circ},\ell^{\circ})$. Let $\ell_*(u,\nu)\in K$ be the last coordinate of $\xi(u,\nu)$. The last sentence of the proof of Theorem 4.7 defines $\phi_S(u,\nu) := \phi(\pi(u),\nu,\ell_*(u,\nu))$.
    \cite[Corollary 4.15(d)]{marshall2016p} implies\footnote{In Marshall's notation $Y_{s'} \subseteq \ell K_{i,\BC}$ is implicitly projecting $Y_{s'}$ to its $K$-coordinate, and $\xi(s')$ lies in $Y_{s'}$. The notation $s \in S_i$ is equivalent to ${u}_{i} = 0$. } that when ${u}_{i} = 0$, we have $\ell_*(u,\nu) \in \ell^{\circ} Z_K(V_i)$. Thus we may write $\ell_*(u,\nu) = \ell^{\circ} \ell_1 = w \ell_0 \ell_1$ where $\ell_1 \in Z_K(V_i)$. Note also that ${u}_{i} = 0$ implies $X:=\pi(u) \in V_i$. We write
    \begin{align*}
        A(\ell_*(u,\nu) \exp X) 
        &=
        A(w \ell_0 \ell_1 \exp X)
        \\
        &=
        A(w (\exp X) \ell_0 \ell_1)
        \\
        &=
        A((\exp w X) w\ell_0 \ell_1)
        \\
        &=
        wX
        ,
    \end{align*}
    where in the second line we use that $\ell_1 \in Z_K(V_i)$, $\ell_0 \in Z_K(\fa_{\ell^{\circ}})$ and $V_i \subset \fa_{\ell^{\circ}}$. Thus $ \phi_S(u, \nu) - \langle \nu, wX \rangle = \phi(X, \nu, \ell_*(u,\nu)) - \langle \nu, wX \rangle = 0$. 
    \end{proof}

    Now having set up Theorem \ref{thm:simon47}, Theorem \ref{thm: Weyl Harish-Chandra symbol} follows from a stationary phase calculation. Let $\cB \subset \cA$ be the closure of $ \iota(B)$ and let $\cS = \cB\times B^* \times K$. Using compactness, take a finite collection of basepoints $(u^\circ_j, \nu^\circ_j, \ell^\circ_j) \in \cS$ whose associated neighborhoods cover $\cS$. Let $\sum_j \chi_j(u,\nu,\ell)$ be a partition of unity adapted to the cover, satisfying $\sum_j \chi_j(u,\nu,\ell) = 1$ on $\cS$ and set 
    \[
    c_j(u,\nu,z) := \chi_j(u,\nu,f_{u,\nu}^{-1}(z)) c(\pi(u),\nu,f_{u,\nu}^{-1}(z))
    .
    \]
    Then 
    \[
    I_t(c;X,\nu) = 
    \sum_j 
     \int_{\BR^{\dim K}} 
    \exp 
    \left(
    it\phi(\pi(u),\nu,f_{u,\nu}^{-1}(z))
    \right)
    c_j(u,\nu,z)
    J(u,\nu,z)
    dz
    \]
    where $J$ is some smooth Jacobian. Applying Theorem \ref{thm:simon47} to each summand, we obtain
    \[
    I_t(c;X,\nu) = \sum_j e^{it\langle \nu, w_j \pi(u) \rangle}
    \tilde{a}_{j,t}(u,\nu)
    \]
    where $w_j$ is the Weyl element associated to $\ell^\circ_j$, and
    \begin{align*}
    \tilde{a}_{j,t}(u,\nu) :=
    \int_{\BR^{\dim K}}
    \exp \left(
    it
    \left(
        -\sum_{i=1}^d
        \langle \nu, \alpha_i \rangle 
        \alpha_i (w_j \pi(u)) 
        {z}_{i}^2 
        + Q(u) [L_j({z}_{d+1},\dots,{z}_{\dim K}) + R(u,\nu)]
    \right)
    \right)
    \\
    \times c_j(u,\nu,z)
    J(u,\nu,z) 
    dz
    .
    \end{align*}
    Here $d,\alpha_i,Q,L,R$ also depend on $j$ but we suppress the subscript. 
    
    We are finally ready to perform integration by parts. Note that $c_j, R$ and $J$ are smooth in $\nu$. Since $\nu \in B^*$ avoids the singular set, $\langle \nu, \alpha_i \rangle$ is bounded away from 0. Thus, applying stationary phase in $z_i$ gives a decay of $(1+t\abs{\alpha_i(w \pi(u))})^{-1/2}$ for each $1 \leq i \leq d$. Meanwhile, if $Q \neq 0$, then $\partial_{z_k} L \neq 0$ for some $k \geq d+1$, and applying nonstationary phase in $z_k$ gives rapid decay in $t \abs{Q(u)}$. Thus
    \begin{align}\label{eq: Q(u) prod bound}
        \abs{
    \tilde{a}_{j,t}(u,\nu) 
    }
    \lesssim_N
    \bigl(1+t|Q(u)|\bigr)^{-N}
    \prod_{i=1}^{d}
    \bigl(1+t|\alpha_i(w\pi(u))|\bigr)^{-1/2}.
    \end{align}
The same bound applies after taking arbitrary derivatives in $\nu$: substitute $y_i = \langle \nu, \alpha_i \rangle^{1/2} z_i$. Since $\nu \in B^*$ the Jacobian is bounded. Differentiating in $\nu$, we get factors $D_\nu R(u,\nu)$ and $t\abs{Q(u)}$ in the amplitude. The former is bounded, while the latter may be canceled by taking $N$ larger. 

    It remains to show that the right-hand side of \eqref{eq: Q(u) prod bound} is bounded by 
    \[
    \lesssim 
    \prod_{\alpha \in \Delta^+} (1+t \abs{\alpha(X)})^{-m_\alpha/2}
    .
    \]
    The roots missing from the product on the left-hand side are those $\alpha$ with $\alpha(w V_q) = 0$. (If $Q=0$, then $d=n-r$ and all roots are already included). We have
    \begin{align*}
        \alpha(w \pi(u)) 
        &= \sum_j \alpha(w\hat{\beta}_j) u_0 \dots u_{j-1}
        \\
        &= u_0 \dots u_q \sum_j \alpha(w\hat{\beta}_j) u_{q+1} \dots u_{j-1},
    \end{align*}
    and $u_{q+1},\dots,u_{j-1}$ are bounded on the neighborhood, hence $\abs{\alpha(w \pi(u))} \lesssim \abs{u_0 \dots u_q}$ on the neighborhood. By the definition of $q$, we have $u_i^\circ \neq 0$ for $q+1 \leq i \leq p$, and by demanding sufficiently small neighborhoods we can assume $\abs{u_i} > \abs{u_i^\circ}/2$ for $q+1 \leq i \leq p$. Hence $\abs{u_{q+1}\dots u_p} \gtrsim 1$ and $\abs{\alpha(w \pi(u))} \lesssim \abs{u_0 \dots u_q} \lesssim \abs{u_0 \dots u_p} = \abs{Q(u)}$, with the implicit constant depending only on the root and the neighborhood. Thus, by taking $N$ large enough, we may bound the right-hand side of \eqref{eq: Q(u) prod bound} by
    \begin{align*}
    &\lesssim 
    \prod_{\alpha \in \Delta^+} (1+t \abs{\alpha(w\pi(u))})^{-m_\alpha/2}
     =\prod_{\alpha \in \Delta^+} (1+t \abs{\alpha(X)})^{-m_\alpha/2}
    \end{align*}
    as desired. Finally, setting
    \[
    a_{w,t}(X,\nu) := \sum_{j : w_j = w} \tilde{a}_{j,t}(\iota(X), \nu)
    \]
    gives the theorem. 
\end{proof}

\section{Symbol estimates}\label{sec 4}

In this section, we prove some symbol estimates, which may be applied in the same way to bound $L^2\to L^2$ norms in the next two sections.

We first recall a finite volume result of Jean-Louis Clerc, and the proof from \cite[\S6]{Cle87}, which is the source of our requirement that $\fg$ is simple.

\begin{proposition}[\cite{Cle87} Proposition 6.1]\label{prop: cle volume estimate}
    If $\fg$ is simple, the set $\{H\in\fa\mid |\omega(H)|\leq1  \}$ has finite Lebesgue measure.
\end{proposition}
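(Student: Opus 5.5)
The plan is to reduce the statement to a local integrability question on the unit sphere of $\fa$, using homogeneity. Put $D=\sum_{\alpha\in\Delta^+}m_\alpha=n-r$, so that $\omega$ is homogeneous of degree $D$. Write $H=s\theta$ with $s>0$ and $\theta\in S^{r-1}\subset\fa$. Then $|\omega(s\theta)|\le1$ if and only if $s\le|\omega(\theta)|^{-1/D}$, and polar coordinates give
\[
\bigl|\{H\in\fa\mid |\omega(H)|\le1\}\bigr| = \frac1r\int_{S^{r-1}}|\omega(\theta)|^{-r/D}\,d\theta .
\]
So it suffices to show that $|\omega|^{-r/D}$ is integrable on the sphere. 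Equivalently, by homogeneity, it suffices that $\prod_{\alpha\in\Delta^+}|\alpha(H)|^{-m_\alpha r/D}$ is locally integrable on $\fa\setminus\{0\}$. Away from the root hyperplanes the integrand is bounded, so only neighbourhoods of singular points $\theta_0$ matter.

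\textbf{Step 1 (local model).} Near a singular point $\theta_0\neq0$, only the roots vanishing at $\theta_0$ can be small. These roots form the subsystem $\Phi_{\theta_0}$ of roots vanishing on the subspace $L$ cut out by the root hyperplanes through $\theta_0$. Let $k=\operatorname{codim}L=\rank\Phi_{\theta_0}$; then $k<r$ because $\theta_0\neq0$. The local question becomes the integrability of $\prod_{\alpha\in\Phi^+_{\theta_0}}|\alpha(Y)|^{-m_\alpha r/D}$ near $0$ in the $k$-dimensional space $\fa/L$. This is again a homogeneous function, now of degree $-n_{\theta_0}r/D$ with $n_{\theta_0}=\sum_{\alpha\in\Phi^+_{\theta_0}}m_\alpha$.

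\textbf{Step 2 (induction on rank).} Run the same polar-coordinate argument in $\fa/L$. Integrability at the origin requires the degree to exceed $-k$, that is, $n_{\theta_0}r/D<k$. Integrability on the unit sphere of $\fa/L$ reduces, by the same procedure, to strictly smaller subsystems. Induction on the rank then reduces everything to the following combinatorial inequality: for every proper subsystem $\Phi_L\subsetneq\Delta$ cut out by a nonzero subspace $L$, with rank $k$,
\begin{equation}\label{eq: plan ratio}
\frac{1}{k}\sum_{\alpha\in\Phi_L^+}m_\alpha<\frac{1}{r}\sum_{\alpha\in\Delta^+}m_\alpha .
\end{equation}
Each step of the induction compares a subsystem with a smaller one. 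The chain of inequalities \eqref{eq: plan ratio}, applied along nested subsystems, gives exactly the exponent conditions needed at each level.

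\textbf{Step 3 (the key inequality, expected main obstacle).} This is where simplicity of $\fg$ enters. If $\fg=\fg_1\oplus\fg_2$, the subsystem $\Delta_1$ with ratio $\ge$ the average violates \eqref{eq: plan ratio}, and indeed the sublevel set of $\omega_1\omega_2$ has infinite measure. My first approach is to decompose $\Phi_L$ into irreducible components, each of which corresponds to a connected proper subdiagram of the Dynkin diagram up to $W$-conjugacy. I would then show that the multiplicity-weighted count of positive roots per unit rank strictly increases when a connected subdiagram is enlarged by one adjacent node. A component $\Psi$ of rank $k_i$ should have ratio strictly smaller than that of any connected irreducible system containing it with larger rank. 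Since $\Delta$ is irreducible, every proper component embeds in a strictly larger connected subdiagram and eventually in $\Delta$ itself. Averaging over the components then gives \eqref{eq: plan ratio}.

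To prove the monotonicity, I would use a trace identity. For an irreducible system, the $W$-invariant form $\sum_{\alpha\in\Delta^+}m_\alpha\,\alpha\otimes\alpha$ is proportional to the Killing form, so it equals $\frac{1}{2}\langle\cdot,\cdot\rangle$ on $\fa$. Restrict this identity to $\operatorname{span}\Phi_L$ and compare traces. The full system contributes additional roots not in $\Phi_L$ that have nonzero projection onto $\operatorname{span}\Phi_L$, because $\Delta$ is irreducible. This should yield the strict inequality. The difficulty is controlling the differing root lengths and multiplicities in non-reduced or non-simply-laced cases; if the trace argument fails there, I would fall back on a case check over the classification of restricted root systems.
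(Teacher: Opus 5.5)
Your Steps 1--2 are correct, and they give a genuinely different reduction from the one the paper reproduces from Clerc in the proof of Lemma~\ref{lem: tail volume}. The paper works in $\fa^+$ with simple-root coordinates $x_i=\alpha_i(H)$ and splits according to the ordering of the $x_i$. It then substitutes $y_1=x_1$, $y_k=x_k/x_{k-1}$, bounds $\omega$ below by the monomial $\prod_j x_j^{N_j-N_{j-1}}$, and is left with one-variable integrals $\int_1^\infty y_k^{rN_{k-1}/N_r-k}\,dy_k$. You instead use homogeneity and the standard integrability criterion for products of powers of linear forms, applied to the flats of the root-hyperplane arrangement. At each level of your induction the condition is just the ratio inequality for that level's own flat, with the fixed exponent $r/(n-r)$; no comparison between consecutive subsystems is needed. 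Both routes land on the same combinatorial input. Conjugating a generic point of $L$ into $\overline{\fa^+}$ shows that $\Phi_L$ is $W$-conjugate to the roots supported on some $\Theta\subset\{1,\dots,r\}$ with $|\Theta|=k$, and multiplicities are $W$-invariant. So your inequality $\frac1k\sum_{\alpha\in\Phi_L^+}m_\alpha<\frac{n-r}{r}$ is exactly Proposition~\ref{prop: Cle lie algebra lemma}, which the paper quotes from Clerc without proof. Citing it would complete your argument. The paper's route has the advantage that the quantitative tail bound of Lemma~\ref{lem: tail volume} drops out directly; yours would also give it, from the slack in the strict inequality.

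The gap is your own proof of that inequality in Step 3. The Killing-form identity weights each root by $|\alpha|^2$. Restricting $\sum_{\alpha\in\Delta^+}m_\alpha\,\alpha\otimes\alpha=\frac12\langle\cdot,\cdot\rangle$ to $V=\operatorname{span}\Phi_L$ and taking traces therefore yields only $\sum_{\Phi_L^+}m_\alpha|\alpha|^2<\frac kr\sum_{\Delta^+}m_\alpha|\alpha|^2$. This is fine in the reduced simply laced case, where all roots have the same length. It does not imply the unweighted inequality when $\Delta$ has roots of different lengths (types $B$, $C$, $BC$, $F_4$, $G_2$, which occur e.g.\ for $\mathfrak{so}(p,q)$ and $\mathfrak{su}(p,q)$): if $\Phi_L$ consists of short roots, the right side overweights the long roots. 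The monotonicity-under-enlargement claim is asserted rather than proved, and the classification check is not carried out, so as written the key inequality is not established.

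The missing idea is to normalize the weights. The form $Q(H)=\sum_{\alpha\in\Delta^+}m_\alpha\,\alpha(H)^2/|\alpha|^2$ is still $W$-invariant, and $W$ acts irreducibly on $\fa$ because $\Delta$ is irreducible. Hence $Q=c\|H\|^2$, and the trace over $\fa$ gives $c=\frac{n-r}{r}$. Taking the trace over $V=L^\perp$, which has dimension $k$, instead gives
\[
\sum_{\alpha\in\Phi_L^+}m_\alpha+\sum_{\beta\in\Delta^+\setminus\Phi_L}m_\beta\,\frac{|\pi_V\beta|^2}{|\beta|^2}=\frac{k(n-r)}{r}.
\]
The second sum is strictly positive, since otherwise $\Delta=\Phi_L\sqcup(\Delta\cap V^\perp)$ would be an orthogonal splitting into two nonempty pieces. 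This proves the inequality uniformly, with no need for irreducible components, monotonicity, or the classification.
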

It is also easy to see that the converse holds. In the simplest case of $\mathfrak{sl}(2,\BR) \times \fsl(2,\BR)$, the set is $\{ (x,y) \mid \abs{xy} \leq 1 \}$, and a similar situation occurs whenever $\fg$ has a nontrivial factorization. Note that $\omega$ is homogeneous of degree $n-r$, so by rescaling we have for arbitrary $c>0$
\begin{align}\label{eq: volume estimates}
    \Vol( \{ H\in\fa\mid |\omega(H)|\leq c \} )\lesssim c^{\frac{r}{n-r}}.
\end{align}

We will prove a refinement of Proposition \ref{prop: cle volume estimate}, which is of the form $\Vol(\{|\omega(H) | \leq 1, \norm{H} \geq R \})\lesssim R^{-\delta}$ for any $R\geq1$ and a fixed $\delta>0$. We prove this from the same strategy of \cite{Cle87} (see also \cite{Rot74,Rot75} when $\fg$ is a complex simple Lie algebra). Let $S=\{\alpha_1,\dots,\alpha_r \}\subset\Delta^+$ be the set of simple roots.  If $\alpha\in\Delta^+$ with $\alpha=\sum_{i=1}^r n_i\alpha_i$, the support of $\alpha$ is defined by $S(\alpha)=\{i\mid n_i\neq0 \}$. If $\Theta$ is a subset of $\{1,2,\dots,r\}$, let $N(\Theta)$ be the number of positive roots, counted with multiplicity, whose supports are contained in $\Theta$.

\begin{proposition}[\cite{Cle87} Proposition 6.2]\label{prop: Cle lie algebra lemma}
    If $\Theta$ is a nonempty proper subset of $\{ 1,2,\cdots,r\}$, we have
    \[
    \frac{N(\Theta)}{|\Theta|}<\frac{N(\{1,2,\dots,r\})}{r}
    \]
    where
    \[
        N(\{1,2,\dots,r\})=\sum_{\alpha\in\Delta^+}m_\alpha=n-r.
    \]
\end{proposition}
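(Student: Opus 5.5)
The plan is to reduce the inequality to a monotonicity statement along chains of connected subdiagrams of the Dynkin diagram of the (irreducible, since $\fg$ is simple) restricted root system $\Delta$. Then I would prove the one-step increment inequality using the structure of root strings, and fall back on the classification where the general argument gets stuck.

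\emph{Step 1 (reduction to connected $\Theta$).} The positive roots with support in $\Theta$ form the positive system of the root subsystem $\Delta_\Theta$ spanned by $\{\alpha_i\}_{i\in\Theta}$. The support of a root is connected in the Dynkin diagram, so $\Delta_\Theta$ splits as the disjoint union of the subsystems $\Delta_{\Theta_k}$ attached to the connected components $\Theta_k$ of $\Theta$. Hence $N(\Theta)=\sum_k N(\Theta_k)$ and $|\Theta|=\sum_k|\Theta_k|$, and so
\[
\frac{N(\Theta)}{|\Theta|}\le \max_k \frac{N(\Theta_k)}{|\Theta_k|}.
\]
It therefore suffices to treat connected proper $\Theta$.

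\emph{Step 2 (chain argument).} The full diagram is connected, so a connected proper $\Theta$ can be enlarged one node at a time through connected sets $\Theta=\Theta_0\subset\Theta_1\subset\cdots\subset\Theta_s=\{1,\dots,r\}$, where $\Theta_{m+1}=\Theta_m\cup\{j\}$ and $j$ is adjacent to $\Theta_m$. It is enough to show that the ratio strictly increases at each step. Writing $\Theta'=\Theta\cup\{j\}$ and $D=N(\Theta')-N(\Theta)$, the condition $\frac{N(\Theta')}{|\Theta|+1}>\frac{N(\Theta)}{|\Theta|}$ is equivalent to
\[
D>\frac{N(\Theta)}{|\Theta|}.
\]
Here $D$ counts, with multiplicity, the positive roots of $\Delta_{\Theta'}$ whose $\alpha_j$-coefficient is nonzero.

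\emph{Step 3 (the increment inequality; main obstacle).} The roots with $\alpha_j$-coefficient exactly $1$ span a representation of the Levi subalgebra attached to $\Theta$, whose lowest weight is $\alpha_j$. My first attempt would use root strings: let $i\in\Theta$ be a neighbour of $j$. For each positive root $\beta\in\Delta_\Theta$ with $n_i(\beta)\neq0$, one checks that $\alpha_j+\beta$ (or a suitable element of the $\alpha_j$-string through $\beta$) is a root, using $\langle\alpha_j,\alpha_i\rangle<0$ and the connectedness of supports. Counting $\alpha_j$ as well then gives
\[
D\ \ge\ m_{\alpha_j}+\#\{\beta\in\Delta_\Theta^+ : n_i(\beta)\neq0\}
\]
(with multiplicities). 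Next I would compare the right-hand side with the average $N(\Theta)/|\Theta|$. I would aim to show that
\[
\frac{1}{|\Theta|}\sum_{i'\in\Theta}\#\{\beta: n_{i'}(\beta)\neq0\}\ \ge\ \frac{N(\Theta)}{|\Theta|},
\]
which holds because every root has at least one nonzero coefficient. The difficulty is that $i$ is a particular node, not an average one. For end nodes of long chains, for example type $A$, the count at $i$ is $|\Theta|$ while the average is $(|\Theta|+1)/2$, so the inequality still holds there. To control the count at $i$ in general, I would use the fact that the neighbour of $j$ in $\Theta$ tends to be an extremal node of $\Theta$, together with the explicit string structure.

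If the general argument does not close cleanly, the fallback is a direct check. The irreducible restricted root systems with multiplicities are classified (types $A$--$G$ and $BC$, from Araki's tables). For each of them, and for each connected subdiagram, one computes $N(\Theta)/|\Theta|$. On connected pieces this depends only on the subsystem type and its multiplicities, and it is increasing in the rank within each family, since it equals half the Coxeter number weighted by multiplicities. This makes Step 2 a finite verification, with only a few exceptional cases such as $E_8$, $F_4$ and $G_2$ left to check by hand.
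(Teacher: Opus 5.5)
The paper does not prove this statement; it quotes it from Clerc. So your proposal has to stand on its own, and as written it does not. Steps 1 and 2 are correct reductions, but Step 3, which carries all the content, is left open. Two things go wrong there.

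\emph{Multiplicities.} The map $\beta\mapsto\beta+\alpha_j$ does not preserve multiplicities when $\langle\beta,\alpha_j^\vee\rangle\le -2$. For example, take $\beta=2e_n$ and $\alpha_j=e_{n-1}-e_n$ in type $C_n$ or $BC_n$; then $\beta+\alpha_j=e_{n-1}+e_n$ has a different length and hence possibly a different multiplicity. So your bound ``with multiplicities'' is not justified.

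\emph{The count at $i$.} More seriously, you give no argument that the count at the specific neighbour $i$ exceeds $N(\Theta)/|\Theta|$. The heuristic that $i$ ``tends to be an extremal node'' is not true in general. In $E_6$, take $\Theta$ to be the $A_5$ chain and $j$ the remaining node, which is attached to the middle of the chain. Extremality is also not what helps: in type $A_k$ the count at node $i'$ is $i'(k+1-i')$, which is smallest at the ends.

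\emph{The fallback.} The classification check is announced but not carried out. As described, it is also insufficient, because the chain steps cross families: $A_k\subset B_{k+1}$, $A_k\subset D_{k+1}$, $A_{n-1}\subset BC_n$, $D_5\subset E_6$, $B_3\subset F_4$, $A_1\subset G_2$. Moreover, with unequal multiplicities the ratio is $(m_sN_s+m_lN_l)/k$, for instance $m_s+(k-1)m_l$ for $B_k$, not a Coxeter number. So ``increasing in the rank within each family'' does not cover what Step 2 needs.

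\emph{The missing idea.} Step 3 closes by induction on the rank, with no classification at all.

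First, replace $\beta+\alpha_j$ by $s_j\beta=\beta-\langle\beta,\alpha_j^\vee\rangle\alpha_j$. Since $\langle\beta,\alpha_j^\vee\rangle\le n_i(\beta)\langle\alpha_i,\alpha_j^\vee\rangle<0$, the root $s_j\beta$ is a positive root of $\Delta_{\Theta'}$ with nonzero $\alpha_j$-coefficient. The map $\beta\mapsto s_j\beta$ is injective and misses $\alpha_j$, because $s_j\beta$ keeps $n_i\neq0$. Also $m_{s_j\beta}=m_\beta$ by $W$-invariance of the multiplicities. Hence
$D\ge m_{\alpha_j}+N(\Theta)-N(\Theta\setminus\{i\})$.

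Second, the inequality $N(\Theta)-N(\Theta\setminus\{i\})\ge N(\Theta)/|\Theta|$ is equivalent to $N(\Theta\setminus\{i\})/(|\Theta|-1)\le N(\Theta)/|\Theta|$. This is exactly the proposition for the irreducible subsystem $\Delta_\Theta$, of rank $|\Theta|<r$ with the inherited $W_\Theta$-invariant multiplicities, applied to its subset $\Theta\setminus\{i\}$. When $|\Theta|=1$ it is trivial.

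So state the result for all irreducible, possibly non-reduced, root systems with positive $W$-invariant multiplicities, and induct on the rank. Your Steps 1--3 then give a complete proof, with $m_{\alpha_j}>0$ supplying the strict inequality.
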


Thanks to Proposition \ref{prop: Cle lie algebra lemma}, we can find a constant $\epsilon(\fg)>0$, which only depends on $\fg$, such that the following inequalities hold. For any nonempty and proper subset $\Theta\subset \{ 1,2,\cdots,r\}$, we have
\begin{align}
    \frac{rN(\Theta)}{n-r}-|\Theta|+\epsilon(\fg)<0.\label{eq: defn of epsilon g}
\end{align}

\begin{lemma}\label{lem: tail volume}
    Let $\fg$ be a simple Lie algebra of real rank $r\geq2$.
    Let $\epsilon(\fg)>0$ be a constant satisfying \eqref{eq: defn of epsilon g}.
    We have for any $R\geq1$
    \begin{align*}
        \Vol(\{ H\in\fa\mid |\omega(H)|\leq 1,\; \|H\|\geq R \} ) \lesssim R^{-\frac{\epsilon(\fg)}{(r-1)}}.
    \end{align*}
\end{lemma}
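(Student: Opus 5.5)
By Weyl invariance of $|\omega|$, I will work in the closed positive chamber $\overline{\fa^+}$ and write $H=\sum_{i=1}^r x_i\hat\alpha_i$ with $x_i=\alpha_i(H)\ge0$. Here $\hat\alpha_i$ are the fundamental coweights dual to the simple roots, so $\|H\|\asymp\max_i x_i$. For a positive root $\alpha=\sum n_i\alpha_i$ with $S(\alpha)=\Theta_\alpha$ we have $\alpha(H)=\sum_{i\in\Theta_\alpha}n_ix_i\asymp\max_{i\in\Theta_\alpha}x_i$. Hence
\[
|\omega(H)|\asymp\prod_{\alpha\in\Delta^+}\Bigl(\max_{i\in S(\alpha)}x_i\Bigr)^{m_\alpha}.
\]

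The plan is to decompose dyadically as in Clerc's proof of Proposition \ref{prop: cle volume estimate}, and to keep track of the size of the largest coordinate. I would put $x_i\sim 2^{k_i}$ with $k_i\in\mathbb Z$; coordinates equal to $0$ form a null set, and the dyadic scales cover everything else. I would then order the indices so that $k_{\sigma(1)}\ge\cdots\ge k_{\sigma(r)}$. For $1\le j\le r$ let $\Theta_j=\{\sigma(j),\dots,\sigma(r)\}$, which is the set of the $r-j+1$ smallest coordinates. Each root $\alpha$ contributes $2^{m_\alpha k_{\sigma(j)}}$, where $j$ is the smallest index with $\sigma(j)\in S(\alpha)$. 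Summation by parts gives
\[
\log_2|\omega|\approx\sum_{j=1}^r\bigl(k_{\sigma(j)}-k_{\sigma(j+1)}\bigr)\bigl(n-r-N(\Theta_{j+1})\bigr),
\]
with the conventions $k_{\sigma(r+1)}=0$ and $N(\Theta_{r+1})=0$. The volume of the dyadic box is $2^{\sum_i k_i}$, and the same summation by parts gives $\sum_i k_i=\sum_j(k_{\sigma(j)}-k_{\sigma(j+1)})\,j$. Write $d_j=k_{\sigma(j)}-k_{\sigma(j+1)}$, so that $d_j\ge0$ for $j<r$ while $d_r=k_{\sigma(r)}$ has either sign. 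The conditions become
\[
\sum_j d_j\bigl(n-r-N(\Theta_{j+1})\bigr)\le C,\qquad \sum_{j=1}^r d_j=k_{\sigma(1)}\ge\log_2R-C.
\]
The quantity to bound is the sum of $2^{\sum_j j d_j}$ over all such configurations.

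The key step is to combine the two weights through \eqref{eq: defn of epsilon g}. Applied to $\Theta_{j+1}$ for $1\le j\le r-1$, which has size $r-j$, it gives $rN(\Theta_{j+1})/(n-r)\le r-j-\epsilon(\fg)$. Equivalently,
\[
j\le \tfrac{r}{n-r}\bigl(n-r-N(\Theta_{j+1})\bigr)-\epsilon(\fg)\qquad (j<r),
\]
while for $j=r$ we have equality $r=\tfrac{r}{n-r}(n-r)$. It follows that
\[
\sum_j jd_j\le\tfrac{r}{n-r}\log_2|\omega|-\epsilon(\fg)\sum_{j<r}d_j.
\]

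Now $\sum_{j<r}d_j=k_{\sigma(1)}-k_{\sigma(r)}$. Since $|\omega|\le1$ and $\omega$ is homogeneous, the top coordinate $k_{\sigma(1)}\ge\log_2R$ cannot be balanced unless the smallest coordinate $k_{\sigma(r)}$ is very negative. The main obstacle is to make this quantitative and then sum.

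For the summation I would handle $d_r$ separately. For fixed $d_1,\dots,d_{r-1}\ge0$, the constraint on $|\omega|$ bounds $d_r$ from above, and the sum over $d_r$ of $2^{rd_r}$ is geometric. It is dominated by its largest term and produces $2^{\sum_j jd_j}$ with $\log_2|\omega|\approx0$, which is at most $2^{-\epsilon(\fg)\sum_{j<r}d_j}$.

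It remains to sum over $(d_1,\dots,d_{r-1})\in\mathbb Z_{\ge0}^{r-1}$ with $s=\sum_{j<r}d_j$. I need $s\gtrsim\log_2R$. If $d_r\ge0$ then every coordinate is $\ge1$, so $|\omega|\gtrsim R^{m}$ for some $m\ge1$, which is impossible for large $R$. So $d_r<0$, and then $s=k_{\sigma(1)}-d_r\ge\log_2R-C$.

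The number of tuples with a given value of $s$ is $O(s^{r-2})$. This gives
\[
\sum_{s\ge\log_2R-C}s^{r-2}2^{-\epsilon s}\lesssim R^{-\epsilon}(\log R)^{r-2}\lesssim R^{-\epsilon/(r-1)}.
\]
This exponent is weaker than what the argument gives but is exactly what is claimed. Alternatively, one can avoid the logarithm directly by noting that the largest $d_j$ is at least $s/(r-1)$. Finally, summing over the finitely many orderings $\sigma$ and Weyl chambers gives the lemma.
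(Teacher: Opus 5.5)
Your proof is correct and is essentially the paper's argument in dyadic form. The paper also fixes an ordering of the simple-root coordinates and passes to the ratio variables $y_1=x_1$, $y_i=x_i/x_{i-1}$ (your $2^{d_j}$, in reverse order). It then bounds $\omega$ below by the corresponding monomial, integrates out the smallest coordinate first, and uses \eqref{eq: defn of epsilon g} to make every remaining ratio exponent $<-1-\epsilon(\fg)$. Your count of tuples with fixed $s$ gives the slightly stronger bound $R^{-\epsilon(\fg)}(\log R)^{r-2}$. The paper instead uses the cruder pigeonhole step (some ratio exceeds $(R/\sqrt{r})^{1/(r-1)}$), which is the alternative you mention at the end.
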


\begin{remark}
Indeed, by Proposition \ref{prop: cle volume estimate}, the measure must decay with $R$, so the only question is whether the decay is a power. This actually holds for any set defined by polynomial inequalities, by \cite[Proposition 1.5]{ CluckersMiller2011}. The measure of the set is easily seen to be a \textit{constructible function} of $R$, and applying their proposition with $X = \{ 0 \}, y = R$ gives $\lesssim R^{-\epsilon}$ for some ineffective $\epsilon > 0$. 
\end{remark}

\begin{proof}

    We may restrict to the positive Weyl chamber $\fa^+$. We can identify $\fa^+$ with $\BR^r_{>0}$ via the simple root coordinates  
    \begin{align}\label{eq: simple root coordinates}
        H\mapsto x=(x_1,\cdots,x_r),\qquad\text{ where }x_i=\alpha_i(H).
    \end{align}
    Under this realization, $\omega(H)$ is a product of linear forms with nonnegative coefficients, and we denote the corresponding polynomial in $\BR_{>0}^r$ by $\omega(x)$. Therefore, the volume considered restricted to $\fa^+$ is bounded by
    \begin{align*}
        &\lesssim
        \Vol(\{x\in\BR_{>0}^r\mid \omega(x)\leq1,\;x_i>0,\;\|x\|\geq R\})
        \\
        &\lesssim
        \sum_{\pi \in S_r} 
        \Vol(\{\omega(x)\leq1,\;0<x_{\pi(1)}\leq x_{\pi(2)}\leq\cdots\leq x_{\pi(r)},\;\|x\|\geq R\}).
    \end{align*}
    We consider the region $0<x_{1}\leq x_{2}\leq\cdots\leq x_{r}$ as the other estimates are the same.  In this region, we have $x_r^2< \|x\|^2\leq r x_r^2$, so
    \begin{align}\label{eq: region 1 volume}
        &\Vol(\{\omega(x)\leq1,\;0<x_{1}\leq x_{2}\leq\cdots\leq x_{r},\;\|x\|\geq R\})\notag\\
        \leq&\Vol(\{\omega(x)\leq1,\;0<x_{1}\leq x_{2}\leq\cdots\leq x_{r},\;x_r\geq R/\sqrt{r}\}).
    \end{align}
    By a change of variables
    \begin{align}\label{eq: change x to y}
    y_1=x_1,\; y_2=\frac{x_2}{x_1},\;\cdots,\;y_r=\frac{x_r}{x_{r-1}},
    \end{align}
    whose Jacobian is equal to $y_1^{r-1}y_2^{r-2}\cdots y_{r-1}$, the region in \eqref{eq: region 1 volume} becomes
    \begin{align*}
        \{\omega(x)\leq1,\; y_1>0,\; y_2,\cdots,y_r\geq1,\; y_1y_2\cdots y_r\geq R/\sqrt{r} \}
    \end{align*}
    To deal with the condition $\omega(x)\leq1$, we write $N_i = N(\{1,2,\cdots,i\})$, and notice that
    \[
    \omega(x)\geq x_1^{N_1} x_2^{N_2-N_1}\cdots x_r^{N_r-N_{r-1}} =
    \left(y_1y_2^{1-\frac{N_1}{N_r}}\cdots y_r^{1-\frac{N_{r-1}}{N_r}}\right)^{N_r}.
    \]
    Therefore, the volume \eqref{eq: region 1 volume} is bounded by the integral of the Jacobian $y_1^{r-1}y_2^{r-2}\cdots y_{r-1}$ over the region
    \begin{align}\label{eq: region 2} 
       \left\{0<y_1\leq y_2^{\frac{N_1}{N_r}-1}\cdots y_r^{\frac{N_{r-1}}{N_r}-1},\; y_2,\cdots,y_r\geq1,\; y_1y_2\cdots y_r\geq \frac{R}{\sqrt{r}} \right\}.
    \end{align}
    Combining the first and the third inequalities above and using $\frac{N_{k-1}}{N_r}<1$, we obtain
    \begin{align*}
        y_2\cdots y_r\geq \frac{R}{\sqrt{r}},
    \end{align*}
    so the region \eqref{eq: region 2} is contained in the larger region
    \begin{align*}
        \bigcup_{k=2}^r\left\{0<y_1\leq y_2^{\frac{N_1}{N_r}-1}\cdots y_r^{\frac{N_{r-1}}{N_r}-1},\; y_2,\cdots,y_r\geq1,\; y_k\geq ({R}/\sqrt{r})^{1/(r-1)} \right\}
    \end{align*}
    We thus summarize that the volume \eqref{eq: region 1 volume} is bounded by
    \begin{align*}
        \lesssim&\sum_{k=2}^r\int_1^\infty\cdots\int_{R^{1/(r-1)}}^\infty\cdots\int_1^\infty\left(\int_{0}^{y_2^{\frac{N_1}{N_r}-1}\cdots y_r^{\frac{N_{r-1}}{N_r}-1}}  y_1^{r-1}dy_1\right) y_2^{r-2}\cdots y_{r-1}dy_2\cdots dy_r\\
        =&r^{-1} \sum_{k=2}^r\left(\int_1^\infty y_2^{\frac{rN_{1}}{N_r}-2} dy_2\right)\cdots \left(\int_{R^{1/(r-1)}}^\infty y_k^{\frac{rN_{k-1}}{N_r}-k} dy_k\right)\cdots\left(\int_{1}^\infty y_r^{\frac{rN_{r-1}}{N_r}-r} dy_r\right).
    \end{align*} 
    Using the positive constant $\epsilon(\fg)$ and the inequality \eqref{eq: defn of epsilon g}, for any $2\leq k\leq r$, we have
    \[
    \frac{rN_{k-1}}{N_r}-k<-1-\epsilon(\fg),
    \]
    so the volume considered in this lemma is $$\lesssim\int_{R^{1/(r-1)}}^\infty y^{-1-\epsilon(\fg)} dy\lesssim R^{\frac{-\epsilon(\fg)}{(r-1)}},$$ which completes the proof.
\end{proof}

\begin{lemma}\label{lem: finite log integral}
    The integral
    \[
    \int_{|\omega(H)|\leq1}\log(1+\|H\|) dH
    \]
    is convergent.
\end{lemma}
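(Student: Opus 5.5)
We prove the finiteness of the integral by a dyadic decomposition in $\|H\|$, feeding in Proposition \ref{prop: cle volume estimate} for the bounded part and Lemma \ref{lem: tail volume} for the tail. Write $E=\{H\in\fa\mid |\omega(H)|\leq1\}$. We first dispose of rank one: if $r=1$, then $\omega(H)=\alpha(H)^{m_\alpha}\cdots$ is a nonzero monomial in a single variable. Hence $E$ is a bounded interval and the integral is trivially finite. From now on assume $r\geq2$, so that Lemma \ref{lem: tail volume} applies with its exponent $\delta:=\epsilon(\fg)/(r-1)>0$.

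Split $E$ into the pieces $E_0=E\cap\{\|H\|<1\}$ and $E_j=E\cap\{2^{j-1}\leq\|H\|<2^j\}$ for $j\geq1$. On $E_0$ the integrand is at most $\log 2$. Moreover $E_0$ lies in the unit ball, so its contribution is bounded (alternatively, use Proposition \ref{prop: cle volume estimate}). On $E_j$ we have $\log(1+\|H\|)\leq \log(1+2^j)\lesssim j$. Also $E_j\subset\{H\in E\mid \|H\|\geq 2^{j-1}\}$, and Lemma \ref{lem: tail volume} with $R=2^{j-1}$ gives $\Vol(E_j)\lesssim 2^{-(j-1)\delta}$. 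Therefore
\[
\int_{E}\log(1+\|H\|)\,dH\lesssim 1+\sum_{j\geq1} j\,2^{-(j-1)\delta}<\infty,
\]
because the geometric decay beats the linear factor $j$.

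There is no genuine obstacle here: all the substance lies in the power decay of Lemma \ref{lem: tail volume}. Mere finiteness of the volume, as in Proposition \ref{prop: cle volume estimate}, would not suffice against the logarithmic weight. One could equally write the integral via the layer-cake formula as $\int_0^\infty \Vol(E\cap\{\log(1+\|H\|)>s\})\,ds$ and bound the integrand by $\min(\Vol(E), C e^{-\delta s})$, which is integrable in $s$. The only care needed is to treat the rank-one case separately, since Lemma \ref{lem: tail volume} is stated only for $r\geq2$.
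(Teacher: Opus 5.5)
Your proposal is correct and is essentially the paper's proof. The paper handles rank one trivially (taking the tail exponent to be $1$; you instead note the set is a bounded interval) and, for $r\geq 2$, sums the power decay from Lemma \ref{lem: tail volume} over dyadic shells in $\|H\|$ to beat the logarithmic weight.
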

\begin{proof}
    For rank one, take $\delta(\fg)=1$. By Lemma \ref{lem: tail volume} for $r\geq2$, trivially for rank one, we have
    \begin{align*}
        \Vol( \{ H\in\fa\mid |\omega(H)|\leq 1,\; \|H\|\geq R \} )\lesssim R^{-\delta(\fg)}.
    \end{align*}
    Therefore,
    \begin{align*}
        \int_{|\omega(H)|\leq1}\log(1+\|H\|) dH \lesssim 1+ \sum_{k=0}^\infty \int_{2^k\leq \| H\|\leq 2^{k+1},\;|\omega(H)|\leq1} \log(1+\|H\|) dH\\
        \lesssim 1+ \sum_{k=0}^\infty \log(1+2^{k+1})(2^k)^{-\delta(\fg)}\lesssim 1,
    \end{align*}
    which is convergent.
\end{proof}

We use the tail volume estimate in Lemma \ref{lem: tail volume} to establish a Fourier decay estimate, which is also trivial in rank one.

\begin{lemma}\label{lem: bd for hm via IBP}
    Let $b_0\in C_c^\infty(\BR)$ be a nonnegative even cutoff function that is equal to 1 on $[-1,1]$ and zero outside $[-2,2]$, and let $b_1(x) = b_0(x)-b_0(2x)$. 
      Let $\chi\in C^\infty(\fa)$ and $c>0$. There exists $\delta > 0$ such that for any $\xi\geq c$ and $\nu\in\fa^*$ we have
    \begin{align*}
        \int_\fa \chi(H)b_1(\xi\omega(H)) e^{i\langle\nu,H\rangle} dH\lesssim \xi^{-\frac{r}{n-r}}\left(1+\|\xi^{-\frac{1}{n-r}} \nu\|\right)^{-\delta}.
    \end{align*}
    Here the implied constant depends only on $c$, the sup-norms of $\chi$ and $b_0$ and their derivatives. 
\end{lemma}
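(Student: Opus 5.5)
Rescale first: substitute $H=\xi^{-\frac{1}{n-r}}H'$. Since $\omega$ is homogeneous of degree $n-r$, $\xi\omega(H)=\omega(H')$, and $dH=\xi^{-\frac{r}{n-r}}dH'$. The integral becomes
\[
\xi^{-\frac{r}{n-r}}\int_\fa \chi\bigl(\xi^{-\frac{1}{n-r}}H'\bigr)\, b_1(\omega(H'))\, e^{i\langle \eta,H'\rangle}\,dH',\qquad \eta=\xi^{-\frac{1}{n-r}}\nu .
\]
So it suffices to show that $F(\eta)=\int_\fa \tilde\chi(H)\,b_1(\omega(H))\,e^{i\langle\eta,H\rangle}dH$ satisfies $|F(\eta)|\lesssim(1+\|\eta\|)^{-\delta}$, where $\tilde\chi(H)=\chi(\xi^{-\frac{1}{n-r}}H)$. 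Because $\xi\ge c$, $\tilde\chi$ and all its derivatives are bounded uniformly in $\xi$.

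The support of $b_1(\omega(H))$ lies in $\{1/2\le|\omega(H)|\le 2\}$. By \eqref{eq: volume estimates} this set has finite measure, which gives the trivial bound $|F(\eta)|\lesssim1$. This handles $\|\eta\|\le 1$. For $\|\eta\|\ge1$, split with a cutoff at radius $R$ (to be chosen as a power of $\|\eta\|$), inserting $b_0(\|H\|/R)$ and $1-b_0(\|H\|/R)$. The far piece $\|H\|\gtrsim R$ is bounded trivially by the volume of $\{|\omega|\le2,\ \|H\|\ge R\}$. Applying Lemma \ref{lem: tail volume} (rescaled by a constant, or the rank-one/trivial version as in the proof of Lemma \ref{lem: finite log integral}), this is $\lesssim R^{-\delta(\fg)}$.

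For the near piece, integrate by parts once in the direction $\eta/\|\eta\|$, gaining $\|\eta\|^{-1}$. The cost is the $L^1$ norm of the derivative of the amplitude $\tilde\chi(H)\,b_1(\omega(H))\,b_0(\|H\|/R)$. Derivatives of $\tilde\chi$ and of $b_0(\cdot/R)$ are bounded and contribute $O(1)$, since the region has finite volume. The main term is $b_1'(\omega(H))\,\partial\omega(H)$. On $\|H\|\le 2R$ with $|\omega|\sim1$, each factor of $\partial\omega=\omega\sum_\alpha m_\alpha\alpha'/\alpha(H)$ involves $1/\alpha(H)$. This is not obviously bounded, because $\alpha(H)$ can be as small as roughly $R^{-(n-r-1)}$ when the other roots are of size $R$. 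I would instead use the crude polynomial bound $|\nabla\omega(H)|\lesssim (1+\|H\|)^{n-r-1}$, which gives a derivative cost of $\lesssim R^{n-r-1}$. The near piece is then $\lesssim R^{n-r-1}\|\eta\|^{-1}$.

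Choosing $R=\|\eta\|^{\theta}$ with $\theta=\frac{1}{2(n-r-1)}$ (any $\theta<1/(n-r-1)$ works) gives total $\lesssim \|\eta\|^{-1/2}+\|\eta\|^{-\theta\delta(\fg)}$, a power decay. The main obstacle is this derivative term. It must be checked that $\partial\omega$ stays polynomially controlled in $R$ on the relevant set, which it does since $\omega$ is a polynomial. It must also be checked that $b_1\circ\omega$ is smooth, which holds because $b_1$ vanishes near $0$ and $\omega$ is polynomial. If a cleaner bound is wanted, one could instead integrate by parts along the radial vector field $H\cdot\nabla$, using Euler's identity $H\cdot\nabla\omega=(n-r)\omega$. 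But the direction-$\eta$ argument with polynomial loss already suffices.
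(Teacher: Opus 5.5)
Your proof is correct and follows the paper's argument essentially step for step: rescale by $\xi^{-1/(n-r)}$, cut off at radius $R$, bound the far piece using the tail volume estimate of Lemma \ref{lem: tail volume}, and integrate by parts once on the near piece using the crude bound $|\nabla\omega|\lesssim R^{n-r-1}$. The paper does the integration by parts via the divergence theorem rather than your directional derivative, which is equivalent. The only other difference is the choice of $R$: the paper takes $R=\|\nu\|^{1/(\delta_1+n-r-1)}$, which avoids the division by zero in your $\theta=\frac{1}{2(n-r-1)}$ when $n-r=1$; in that case, as you note, any $\theta$ works anyway.
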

\begin{proof}
    By changing variables $H\mapsto \xi^{-\frac{1}{n-r}}H$, we have
    \begin{align*}
        \int_\fa \chi(H)b_1(\xi\omega(H)) e^{i\langle\nu,H\rangle} dH = \xi^{-\frac{r}{n-r}}\int_\fa \chi(\xi^{-\frac{1}{n-r}}H)b_1(\omega(H))e^{i\langle\xi^{-\frac{1}{n-r}}\nu,H\rangle} dH,
    \end{align*}
    so it suffices to show that, for $\nu\in\fa^*$ with $\|\nu\|\geq1$,
    \begin{align}\label{eq: ibp bound after changing variables}
        \int_\fa \chi(\eta^{-1}H)b_1(\omega(H))e^{i\langle\nu,H\rangle} dH\lesssim \|\nu\|^{-\delta}
    \end{align}
    holds uniformly for $\eta\gtrsim1$.
    
    Let $R\geq1$, which will be chosen explicitly later. By Lemma \ref{lem: tail volume} for $r\geq2$, trivially for $r=1$, we have 
    \begin{align}\label{eq: ibp bound after changing variables 1}
        \int_\fa \big(1-b_0(R^{-1}\|H\|)\big)\chi(\eta^{-1}H)b_1(\omega(H))e^{i\langle\nu,H\rangle} dH\lesssim R^{-\delta_1}
        ,
    \end{align}
    for some $\delta_1 > 0$, which may be taken to be $1$ in the rank 1 case. 
    We bound the remaining part by integrating by parts. By writing $\nabla_H$ and $\rm div_H$ for taking the gradient and the divergence with respect to $H$ on $\fa$, we notice that
    \[
    \mathrm{div}_H\left( \nabla_H e^{i\langle\nu,H\rangle}\right) = -\|\nu\|^2 e^{i\langle\nu,H\rangle}.
    \]
    By the divergence theorem, we have
    \begin{align*}
        &\int_\fa b_0(R^{-1}\|H\|)\chi(\eta^{-1}H)b_1(\omega(H))e^{i\langle\nu,H\rangle} dH\\
        =&-\frac{1}{\|\nu\|^2}\int_\fa b_0(R^{-1}\|H\|) \chi(\eta^{-1}H) b_1(\omega(H)) \mathrm{div}_H\left( \nabla_H e^{i\langle\nu,H\rangle}\right) dH\\
        =&\frac{1}{\|\nu\|^2}\int_\fa \langle\nabla_H\left( b_0(R^{-1}\|H\|) \chi(\eta^{-1}H) b_1(\omega(H)) \right), \nabla_H e^{i\langle\nu,H\rangle}\rangle dH.
    \end{align*}
    It is clear that $\|\nabla_H e^{i\langle\nu,H\rangle}\|\lesssim \|\nu\|$ and
    \begin{multline*}
    \|\nabla_H\left( b_0(R^{-1}\|H\|) \chi(\eta^{-1}H) b_1(\omega(H)) \right)\|\\
    \lesssim 1+ \|b_0(R^{-1}\|H\|) \chi(\eta^{-1}H) \nabla_H\left(  b_1(\omega(H)) \right)\|    \lesssim R^{n-r-1},
    \end{multline*}
    so we get
    \begin{align}\label{eq: ibp bound after changing variables 2}
        \int_\fa b_0(R^{-1}\|H\|)\chi(\eta^{-1}H)b_1(\omega(H))e^{i\langle\nu,H\rangle} dH\lesssim R^{n-r-1}\|\nu\|^{-1}.
    \end{align}
    Combining \eqref{eq: ibp bound after changing variables 1} and \eqref{eq: ibp bound after changing variables 2} gives
     \begin{align*}
        \int_\fa \chi(\eta^{-1}H)b_1(\omega(H))e^{i\langle\nu,H\rangle} dH\lesssim R^{-\delta_1}+R^{n-r-1}\|\nu\|^{-1},
    \end{align*}
    so we complete the proof by choosing $R=\|\nu\|^{1/(\delta_1+n-r-1)}$.
\end{proof}

\begin{proposition}\label{prop: symbol estimates}
    Let $b_0\in C_c^\infty(\BR)$ be a nonnegative even cutoff function that is equal to 1 on $[-1,1]$ and zero outside $[-2,2]$, and let $b_1(x) = b_0(x)-b_0(2x)$.
    Let $\chi\in C_c^\infty(\fa)$. For any $t\geq1$, $y\in\BR$, and $\nu\in\fa^*$, we have
    \begin{align*}
        (2^{iy}-1)\int_\fa \chi(H)\left(b_0\big(t^{n-r}\omega(H)\big) + \sum_{m=1}^{\infty} 2^{m(-\frac{r}{n-r}+iy)}  b_1\big(2^{-m}t^{n-r}\omega(H)\big)\right)e^{i\langle\nu,H\rangle} dH\lesssim t^{-r}.
    \end{align*}
    Here the implied constant depends on $\supp\chi$ and the sup-norms of $\chi$ and $b_0$ and their derivatives. 
\end{proposition}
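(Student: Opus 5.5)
The plan is to exploit the oscillating factor $2^{imy}$ by summation by parts, after checking that each individual term is already of size $t^{-r}$. Put $a=\frac{r}{n-r}$, $T=t^{n-r}$, so $T^{-a}=t^{-r}$, and
\[
g_m(\nu):=2^{-ma}\int_\fa \chi(H)\,b_1\big(2^{-m}T\omega(H)\big)e^{i\langle\nu,H\rangle}\,dH .
\]

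\textbf{Step 1 (the $b_0$ term and the range of $m$).}
- The $b_0$ term is bounded trivially by $\|\chi\|_\infty\Vol\{|\omega|\le 2T^{-1}\}\lesssim T^{-a}=t^{-r}$, using \eqref{eq: volume estimates}. The prefactor $|2^{iy}-1|\le 2$ is harmless here.
- Since $\chi$ has compact support, $|\omega|\le C_\chi$ on $\supp\chi$.
- Since $b_1$ is supported in $\{1/2\le|x|\le 2\}$, $g_m\neq0$ only when $2^m\le 2C_\chi T$.
- Hence the sum is finite, with $m\le M:=\log_2 T+O(1)$.
- In this range $\xi_m:=2^{-m}T\ge c$ for a constant $c$ depending only on $\supp\chi$, so Lemma \ref{lem: bd for hm via IBP} applies to every term.

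\textbf{Step 2 (rescaled form of each term).} Substitute $H\mapsto\varepsilon_m H$ with $\varepsilon_m:=\xi_m^{-1/(n-r)}=2^{m/(n-r)}/t$, noting that $\varepsilon_m\lesssim1$. This gives
\[
g_m=T^{-a}\,G(\varepsilon_m),\qquad G(\varepsilon):=\int_\fa \chi(\varepsilon H)\,b_1(\omega(H))\,e^{i\langle\varepsilon\nu,H\rangle}dH .
\]
- Lemma \ref{lem: bd for hm via IBP} gives $|G(\varepsilon_m)|\lesssim(1+\varepsilon_m\|\nu\|)^{-\delta}$.
- In particular $\sup_m|g_m|\lesssim t^{-r}$.
- The trivial estimate for the full sum loses a factor $\log t$, coming from the indices with $\varepsilon_m\|\nu\|\lesssim1$. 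This is the log loss we must avoid.

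\textbf{Step 3 (summation by parts).} Write
\[
(2^{iy}-1)\sum_{m=1}^M 2^{imy}g_m=\sum_{m=1}^{M}\big(2^{i(m+1)y}-2^{imy}\big)g_m .
\]
Abel summation turns this into $2^{i(M+1)y}g_M-2^{iy}g_1$ plus $\sum_{m=2}^{M}2^{imy}(g_{m-1}-g_m)$.
- The boundary terms are $\lesssim t^{-r}$ by Step 2.
- It therefore suffices to prove $\sum_m|G(\varepsilon_{m})-G(\varepsilon_{m+1})|\lesssim 1$ uniformly in $\nu$ and $t$.

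\textbf{Step 4 (the main obstacle: summing the differences).} I would split the indices according to the size of $\varepsilon_m\|\nu\|$.

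Indices with $\varepsilon_m\|\nu\|\ge1$:
- Bound each $G(\varepsilon_m)$ separately by $(\varepsilon_m\|\nu\|)^{-\delta}$, using Step 2.
- Since $\varepsilon_m$ grows geometrically in $m$, this sum is $O(1)$.

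Indices with $\varepsilon_m\|\nu\|\le 1$:
- Here $\varepsilon_m(1+\|\nu\|)\le 1+\varepsilon_m\lesssim1$.
- Since $\varepsilon_{m+1}=2^{1/(n-r)}\varepsilon_m$, the mean value theorem applied to $\chi(\varepsilon H)e^{i\langle\varepsilon\nu,H\rangle}$ on the support of $b_1\circ\omega$ gives
\[
|G(\varepsilon_m)-G(\varepsilon_{m+1})|\lesssim\int_{|\omega(H)|\le2}\min\big(1,\varepsilon_m(1+\|\nu\|)\|H\|\big)\,dH .
\]
- Interchange the sum over $m$ with the integral.
- For fixed $H$, the numbers $s_m:=\varepsilon_m(1+\|\nu\|)\|H\|$ are geometric in $m$.
- So $\sum_m\min(1,s_m)$ is at most $O(1)$ from the indices with $s_m\le1$, plus the number of indices with $s_m\ge1$.
- Those indices also satisfy $\varepsilon_m(1+\|\nu\|)\lesssim1$, so there are $\lesssim 1+\log(1+\|H\|)$ of them.
- Hence this part is bounded by $\int_{|\omega|\le2}\big(1+\log(1+\|H\|)\big)\,dH$.
- This integral is finite by Proposition \ref{prop: cle volume estimate} together with Lemma \ref{lem: finite log integral} (after rescaling $|\omega|\le 2$ to $|\omega|\le 1$).

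Conclusion: $\sum_m|G(\varepsilon_m)-G(\varepsilon_{m+1})|\lesssim1$. Multiplying by $T^{-a}=t^{-r}$ gives the stated bound $t^{-r}$.

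The delicate point is this bookkeeping in Step 4. The cancellation from the factor $2^{iy}-1$ controls only the "flat" range of $m$ where $G(\varepsilon_m)$ varies slowly. The unbounded support of $\{|\omega|\le2\}$ is handled only through the finiteness of the log-weighted volume, which is where simplicity of $\fg$ enters via Lemma \ref{lem: tail volume}.
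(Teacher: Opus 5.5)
Your proposal is correct and follows essentially the same route as the paper: the volume bound for the $b_0$ term, the Fourier decay from Lemma \ref{lem: bd for hm via IBP} for indices with $2^{m/(n-r)}t^{-1}\|\nu\|\ge 1$, and a partial summation in which the mean value bound $\min\{1,\,2^{m/(n-r)}t^{-1}(1+\|\nu\|)\|H\|\}$ sums to $O(1+\log(1+\|H\|))$, which is then integrated using Lemma \ref{lem: finite log integral}. The only difference is organizational: the paper cuts at the threshold index $m_0$ before summing by parts, whereas you sum by parts over the whole range and then handle the differences in the decaying range with the triangle inequality, which is equivalent.
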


\begin{proof}
    The result is trivial when $2^{iy}=1$ so we will assume $2^{iy}\neq1$. Moreover, the first term
    \[
   (2^{iy}-1)\int \chi(H)b_0\big(t^{n-r}\omega(H)\big)e^{i\langle\nu,H\rangle} dH\lesssim\int b_0\big(t^{n-r}\omega(H)\big)dH\lesssim t^{-r}
    \]
    satisfies the desired bound by the volume estimate \eqref{eq: volume estimates}. By the compact support of $\chi$, there exists a positive constant $C$ which depends on the size of the support of $\chi$, such that $ b_1\big(2^{-m}t^{n-r}\omega(H)\big)=0$ if $m>(n-r)\log_2t+C$, so the sum we are considering is finite, and it suffices to prove that
    \begin{align}\label{eq: L2 to L2 integral bound}
        (2^{iy}-1)\sum_{m=1}^M2^{m(-\frac{r}{n-r}+iy)}\int_\fa \chi(H)    b_1\big(2^{-m}t^{n-r}\omega(H)\big)e^{i\langle\nu,H\rangle} dH\lesssim t^{-r}.
    \end{align}
    Here $M=(n-r)\log_2t+C$ and we choose a suitable $C$ such that $M$ is an integer.
    
    Given $\nu\in\fa^*$ and $t\geq1$, we let $m_0$ be the largest integer satisfying that
    \[
    1\leq m_0\leq (n-r)\log_2t+C\quad\text{ and }\quad\left\| 2^{\frac{m_0}{n-r}}\nu\right\|\leq t.
    \]
    If such an integer doesn't exist, we set $m_0=0$.
    We bound the sum for $m> m_0$ using the Fourier decay estimate. Recall that Lemma \ref{lem: bd for hm via IBP} gives that
    \begin{align}\label{eq: fourier decay for hm}
        2^{m(-\frac{r}{n-r}+iy)} \int_\fa \chi(H)  b_1\big(2^{-m}t^{n-r}\omega(H)\big)e^{i\langle\nu,H\rangle} dH\lesssim  t^{-r}\left(1+\|t^{-1}2^{\frac{m}{n-r}}  \nu\|\right)^{-\delta},
    \end{align}
    for some $\delta>0$. Then summing \eqref{eq: fourier decay for hm} for $m>m_0$ gives
    \begin{align}\label{eq: L2L2 small m estimate}
    \begin{split}
       (2^{iy}-1)\sum_{m_0< m\leq (n-r)\log_2t+C}2^{m(-\frac{r}{n-r}+iy)}\int_\fa \chi(H)    b_1\big(2^{-m}t^{n-r}\omega(H)\big)e^{i\langle\nu,H\rangle} dH\\
        \lesssim t^{-r}\sum_{m>m_0} \|t^{-1}2^{\frac{m}{n-r}}\nu\|^{-\delta} \lesssim t^{-r}.
    \end{split}
    \end{align}

    To bound the remaining sum (which is empty if $m_0=0$), we will use a partial summation trick.
    For any $1\leq m\leq m_0$, by a change of variables $H\mapsto 2^{\frac{m}{n-r}}t^{-1}H$, we have
    \begin{align*}
         \int_\fa \chi(H)  b_1\big(2^{-m}t^{n-r}\omega(H)\big)e^{i\langle\nu,H\rangle} dH=2^{\frac{mr}{n-r}}t^{-r} \int_\fa \chi(2^{\frac{m}{n-r}}t^{-1}H)  b_1(\omega(H))e^{i2^{\frac{m}{n-r}}t^{-1}\langle\nu,H\rangle} dH.
    \end{align*}
    The sum we want to bound is
    \begin{align*}
         &(2^{iy}-1)\sum_{m=1}^{m_0}2^{m(-\frac{r}{n-r}+iy)}\int_\fa \chi(H)    b_1\big(2^{-m}t^{n-r}\omega(H)\big)e^{i\langle\nu,H\rangle} dH\\
         =&t^{-r}\sum_{m=1}^{m_0}(2^{i(m+1)y}-2^{imy})  \int_\fa \chi(2^{\frac{m}{n-r}}t^{-1}H)  b_1(\omega(H))e^{i2^{\frac{m}{n-r}}t^{-1}\langle\nu,H\rangle} dH.
    \end{align*}
    By a partial summation, the above is equal to 
    \begin{align}
         &t^{-r}2^{i(m_0+1)y}\int_\fa \chi( 2^{\frac{m_0}{n-r}}t^{-1}H)b_1\big(\omega(H)\big)e^{i 2^{\frac{m_0}{n-r}}t^{-1}\langle \nu,H\rangle} dH\notag\\
         -&t^{-r}2^{iy}\int_\fa \chi( 2^{\frac{1}{n-r}}t^{-1}H)b_1\big(\omega(H)\big)e^{i 2^{\frac{1}{n-r}}t^{-1}\langle \nu,H\rangle} dH+\notag\\
         &t^{-r}\sum_{m=2}^{m_0}2^{imy}\int_\fa b_1\big(\omega(H)\big) \left(\chi( 2^{\frac{m-1}{n-r}}t^{-1}H)e^{i 2^{\frac{m-1}{n-r}}t^{-1}\langle \nu,H\rangle}-  \chi( 2^{\frac{m}{n-r}}t^{-1}H)e^{i 2^{\frac{m}{n-r}}t^{-1}\langle \nu,H\rangle}\right) dH.\label{eq: L2L2 partial sum}
    \end{align}
    Using Proposition \ref{prop: cle volume estimate}, the first two terms above are clearly $\lesssim t^{-r}$. It remains to bound the sum \eqref{eq: L2L2 partial sum}. We first notice that this sum is trivial unless $m_0\geq2$. Therefore, by our definition for $m_0$, if $m_0\geq2$, we have for any $2\leq m \leq m_0$
    \begin{align}\label{eq: assumption on m mu}
       t^{-1}2^{\frac{m}{n-r}} (1+\|\nu\|)\lesssim 1,
    \end{align}
    and we will assume these in the rest of the proof. We claim that
    \begin{align}\label{eq: L2L2 difference of exps}
    \begin{split}
        \chi( 2^{\frac{m-1}{n-r}}t^{-1}H)e^{i 2^{\frac{m-1}{n-r}}t^{-1}\langle \nu,H\rangle}-  \chi( 2^{\frac{m}{n-r}}t^{-1}H)e^{i 2^{\frac{m}{n-r}}t^{-1}\langle \nu,H\rangle}\\
        \lesssim\min\big\{ 1,\; t^{-1}2^{\frac{m}{n-r}}(1+\|\nu\|)\|H\|\big\}.
    \end{split}
    \end{align}
    The bound $\lesssim1$ is trivial. To derive the second bound we consider the function $F_{\nu,H}\in C^\infty(0,\infty)$ defined by $F_{\nu,H}(\xi) = \chi(\xi H)e^{i\xi\langle\nu,H\rangle}$. It may be seen that
    \begin{align*}
        F_{\nu,H}'(\xi) \lesssim \|H\|+|\langle\nu,H\rangle| \lesssim \|H\|(1+\|\nu\|).
    \end{align*}
    By the mean value theorem, there exists some $2^{\frac{m-1}{n-r}}t^{-1}\leq \xi\leq2^{\frac{m}{n-r}}t^{-1}$ so that
    \begin{align*}
         \chi( 2^{\frac{m-1}{n-r}}t^{-1}H)e^{i 2^{\frac{m-1}{n-r}}t^{-1}\langle \nu,H\rangle}-  \chi( 2^{\frac{m}{n-r}}t^{-1}H)e^{i 2^{\frac{m}{n-r}}t^{-1}\langle \nu,H\rangle}&\lesssim F_{\mu,H}'(\xi)\left(  2^{\frac{m}{n-r}} -2^{\frac{m-1}{n-r}} \right)t^{-1}\\
         &\lesssim \|H\|(1+\|\nu\|)2^{\frac{m}{n-r}}t^{-1},
    \end{align*}
    so we obtain \eqref{eq: L2L2 difference of exps}. Therefore, by \eqref{eq: assumption on m mu} and \eqref{eq: L2L2 difference of exps}, we have
    \begin{align*}
        &\sum_{m=2}^{m_0}2^{imy}\left(\chi( 2^{\frac{m-1}{n-r}}t^{-1}H)e^{i 2^{\frac{m-1}{n-r}}t^{-1}\langle \nu,H\rangle}-  \chi( 2^{\frac{m}{n-r}}t^{-1}H)e^{i 2^{\frac{m}{n-r}}t^{-1}\langle \nu,H\rangle}\right) \\
        \lesssim& \sum_{m=2}^{m_0}   \min\big\{ 1,\; t^{-1}2^{\frac{m}{n-r}}(1+\|\nu\|)\|H\|\big\}\\
        =& \sum_{m=0}^{m_0-2}\min\big\{ 1,\; t^{-1}2^{\frac{m_0-m}{n-r}}(1+\|\nu\|)\|H\|\big\}\\
        \lesssim& \sum_{m=0}^{m_0-2}\min\big\{ 1,\; 2^{-\frac{m}{n-r}}\|H\|\big\}\\
        \lesssim& \sum_{0\leq m \leq (n-r)\log_2(1+\|H\|)} 1 + \sum_{ (n-r)\log_2(1+\|H\|)< m \leq m_0-2} 2^{-\frac{m}{n-r}}\|H\|\\
        \lesssim& \log (1+\|H\|) + \frac{\|H\|}{1+\|H\|}.
    \end{align*}
    so \eqref{eq: L2L2 partial sum} is bounded by
    \[
    t^{-r}\int_\fa b_1\big(\omega(H)\big)\Big(\log(1+\|H\|)+1\Big) dH\lesssim t^{-r},
    \]
    where the integral is finite because of Proposition \ref{prop: cle volume estimate} and Lemma \ref{lem: finite log integral}.
\end{proof}

\section{Spectral projector}
In this section we prove Theorem \ref{thm: main 2}.
Recall that we take a cocompact lattice $\Gamma\subset G$, and let $X=\Gamma\backslash S$ be the locally symmetric space. The Hilbert space $L^2(X)$ has an orthonormal basis $\{\psi_\mu\}_\mu$, where $\psi_\mu$'s are $L^2$-normalized Maass forms of spectral parameters $\mu$. 
We fix a compact set $B^*\subset \fa_+^*$. Given $\lambda\in B^*$ and $t\geq1$ sufficiently large, we can define the spectral projection operator (onto the ball of radius 1 around $t\lambda$) by \eqref{eq: spectral projector defn}.

\subsection{Smooth modification}
We first show that we are allowed to modify the spectral projector to have a smooth symbol which is suitable for the pre-trace formula.
Given a bi-$K$-invariant test function $k\in C_c^\infty(G\sslash K)$, it forms an automorphic kernel
\(
K(x,y)=\sum_{\gamma\in\Gamma} k(x^{-1}\gamma y),
\) on $X$.
Moreover, by Selberg \cite{selberg1956harmonic}, it satisfies for any Maass form $\psi_\mu$ of spectral parameter $\mu$
    \[
    \int_{\Gamma\backslash G/K} K(x,y)\psi_\mu(y) dy = \sH k(-\mu) \psi_\mu(x).
    \]

We fix one $h^0\in\mathcal{PW}(\fa^*)^W$ that is real-valued on $\fa^*$ and satisfies
\begin{align}\label{eq: condition for h0}
    h^0(0)=1, \quad\text{ and } \quad h^0(\nu)\geq1 \text{ in a ball of radius 1 at the origin}.
\end{align}
Given $\lambda\in B^*$ and $t\geq1$ we let
\begin{align}\label{eq: defn for h0t}
    h_{t\lambda}^0(\nu) =\sum_{w\in W}h^0(w\nu+t\lambda)\in \mathcal{PW}(\fa^*)^W.
\end{align}

It is clear that $h_{t\lambda}^0(-t\lambda)\gtrsim1$ and $h_{t\lambda}^0(\nu)$ rapidly decays if $\nu$ is away  from $-tW\lambda$. Let $g^0\in C_c^\infty(\fa)^W$ be the inverse Fourier transform of $h^0$, i.e.,
\[
g^0(H) = \int_{\fa^*} h^0(\nu)e^{i\langle\nu,H\rangle} d\nu,\qquad h^0(\nu)=\int_{\fa} g^0(H)e^{-i\langle\nu,H\rangle} dH 
\]
by normalizing the measures correctly. We will write this Fourier transform by $\sF_\fa: C_c^\infty(\fa)^W\to \cP\cW(\fa^*)^W$, i.e., $h^0 = \sF_\fa{g^0}$.
Therefore,
\begin{align}\label{eq: g0 integral for h0t}
  h_{t\lambda}^0(\nu)=\int_\fa \left(\sum_{w\in W} g^0(H) e^{-it\langle w\lambda,H\rangle}\right)e^{-i\langle \nu ,H\rangle} dH.
\end{align}

Now we give an explicit construction of an auxiliary wider Paley--Wiener function on $\fa^*$. We fix an even bump function $\beta\in C_c^\infty(\fa^*)^W$ such that $0\le\beta\le1$ is real-valued, $\beta(\xi)=1$ for $\|\xi\|\le1$ and $\beta(\xi)=0$ for $\|\xi\|\geq2$. Similarly we fix $\beta'\in C_c^\infty(\fa)^W$ with the same properties as $\beta$. 
Fix a tiny constant $\epsilon_0>0$ such that $\epsilon_0$ is smaller than the distance between $B^*$ and the singular subset of $\fa^*$.
We define a bump function  $\beta^{\epsilon_0}_{t\lambda}\in C_c^\infty(\fa^*)^W$ by
\begin{align*}
    \beta^{\epsilon_0}_{t\lambda}(\nu) =\sum_{w\in W} \beta\left(\frac{w\nu+t\lambda}{\epsilon_0t}  \right),
\end{align*}
so
\[
    \sF_\fa^{-1}\beta^{\epsilon_0}_{t\lambda}(H)= (\epsilon_0t)^r(\sF_\fa^{-1}\beta) (\epsilon_0 t H)\sum_{w\in W} e^{-it\langle w\lambda,H\rangle} .
\]
We define a function $\widetilde \beta^{\epsilon_0}_{t\lambda}$ by its Fourier inversion so that
\begin{align*}
    \sF_\fa^{-1}\widetilde \beta^{\epsilon_0}_{t\lambda}(H) &=\beta'(H)(\sF_\fa^{-1}\beta^{\epsilon_0}_{t\lambda})(H)\\
    &=(\epsilon_0t)^r\beta'(H)(\sF_\fa^{-1}\beta) (\epsilon_0 t H)\sum_{w\in W} e^{-it\langle w\lambda,H\rangle}.
\end{align*}
Note that by definition $\sF_\fa^{-1}\widetilde \beta^{\epsilon_0}_{t\lambda}\in C_c^\infty(\fa)^W$ so $\widetilde \beta^{\epsilon_0}_{t\lambda}\in\cP\cW(\fa^*)^W$.

\begin{lemma}\label{lem: PW tail estimate}
    Let ${ \beta^{\epsilon_0}_{t\lambda}}$ and $\widetilde \beta^{\epsilon_0}_{t\lambda}$ be defined as above and let $\beta^{\rm tail} =\widetilde \beta^{\epsilon_0}_{t\lambda} - { \beta^{\epsilon_0}_{t\lambda}}$. Then for any $N\geq1$ and $\nu\in\fa^*$ we have
    \[
    \beta^{\rm tail}(\nu) \lesssim \sum_{w\in W}(1+\|w\nu+t\lambda\|)^{-N} t^{-N},
    \]
    where the implied constant depends only on $N$ and $\epsilon_0$.
\end{lemma}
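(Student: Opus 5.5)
On the Fourier side, $\widetilde \beta^{\epsilon_0}_{t\lambda}=\beta^{\epsilon_0}_{t\lambda} * \sF_\fa\beta'$ (with the normalization of $\sF_\fa$ fixed above), since multiplying $\sF_\fa^{-1}\beta^{\epsilon_0}_{t\lambda}$ by $\beta'$ corresponds to convolving with $\sF_\fa\beta'$. The function $\sF_\fa\beta'$ is Schwartz on $\fa^*$ and, because $\beta'(0)=1$, has total integral $1$ in this normalization. Hence
\[
\beta^{\rm tail}(\nu)=\int_{\fa^*}\big(\beta^{\epsilon_0}_{t\lambda}(\nu-\eta)-\beta^{\epsilon_0}_{t\lambda}(\nu)\big)\,\sF_\fa\beta'(\eta)\,d\eta .
\]
Both terms split as sums over $w\in W$, so I would fix one $w$ and bound the single-bump contribution $\int(\beta(\frac{w(\nu-\eta)+t\lambda}{\epsilon_0 t})-\beta(\frac{w\nu+t\lambda}{\epsilon_0 t}))\sF_\fa\beta'(\eta)\,d\eta$. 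Writing $\zeta=w\nu+t\lambda$, this equals $\int(\beta(\frac{\zeta-w\eta}{\epsilon_0 t})-\beta(\frac{\zeta}{\epsilon_0 t}))\sF_\fa\beta'(\eta)\,d\eta$. The aim is to show it is $O_N((1+\|\zeta\|)^{-N}t^{-N})$.

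The main idea is that $\beta(\cdot/(\epsilon_0 t))$ is constant (equal to $1$ on $\|\cdot\|\le\epsilon_0 t$, equal to $0$ on $\|\cdot\|\ge 2\epsilon_0 t$) except on the annulus $\epsilon_0t\le\|\cdot\|\le 2\epsilon_0 t$. I would split into two cases.

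\emph{Case $\|\zeta\|\le \epsilon_0 t/2$ or $\|\zeta\|\ge 4\epsilon_0 t$.} The difference of the two $\beta$ values vanishes unless $\|\eta\|\ge \max(\epsilon_0 t/2,\|\zeta\|/2)$. This holds in both subcases, since then $\|\zeta-w\eta\|$ must land on the other side of the relevant threshold. The Schwartz decay of $\sF_\fa\beta'$ then gives the bound $\int_{\|\eta\|\gtrsim t+\|\zeta\|}(1+\|\eta\|)^{-2N-r-1}\,d\eta\lesssim (t+\|\zeta\|)^{-2N}$, which is at most $(1+\|\zeta\|)^{-N}t^{-N}$.

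\emph{Case $\epsilon_0t/2\le\|\zeta\|\le4\epsilon_0 t$.} Here I would Taylor expand $\beta(\frac{\zeta-w\eta}{\epsilon_0 t})$ around $\eta=0$ to order $N'$. The zeroth-order term cancels with the subtracted term. Every higher-order term is a polynomial in $\eta$ of positive degree integrated against $\sF_\fa\beta'$, which gives derivatives of $\beta'$ at $0$; these vanish because $\beta'\equiv1$ near $0$ (all moments of $\sF_\fa\beta'$ of positive order are $0$). The remainder is bounded by $(\epsilon_0 t)^{-N'}\int\|\eta\|^{N'}|\sF_\fa\beta'(\eta)|\,d\eta\lesssim t^{-N'}$. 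Since $\|\zeta\|\sim t$ in this case, choosing $N'=2N$ gives $\lesssim(1+\|\zeta\|)^{-N}t^{-N}$, with constants depending on $\epsilon_0$.

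Summing over $w$ yields the lemma. The only delicate step is the vanishing-moment cancellation in the middle annulus, which requires $\beta'$ to be identically $1$ near the origin; this is precisely what was built into its definition, so I expect no real obstacle.
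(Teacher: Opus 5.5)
Your proof is correct, but it works on the opposite side of the Fourier transform from the paper's.

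\textbf{The paper's route (physical side).} The paper stays on the $H$-side and writes
\[
\beta^{\rm tail}(\nu)=(\epsilon_0t)^r\sum_{w\in W}\int_\fa(\beta'(H)-1)(\sF_\fa^{-1}\beta)(\epsilon_0tH)e^{-i\langle w\nu+t\lambda,H\rangle}\,dH .
\]
Since $1-\beta'$ vanishes on $\|H\|\le1$, the rescaled Schwartz function $(\sF_\fa^{-1}\beta)(\epsilon_0tH)$ is only ever evaluated where it is $O((t\|H\|)^{-\infty})$. Integrating by parts in $H$ then produces the decay in $\|w\nu+t\lambda\|$. The powers of $t$ created when derivatives fall on $(\sF_\fa^{-1}\beta)(\epsilon_0tH)$ are absorbed by that same Schwartz decay.

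\textbf{Your route (frequency side).} You treat $\widetilde\beta^{\epsilon_0}_{t\lambda}$ as the mollification of $\beta^{\epsilon_0}_{t\lambda}$ by the unit-mass kernel $\sF_\fa\beta'$. All positive-order moments of this kernel vanish because $\beta'\equiv1$ near $0$. A Taylor expansion then gives $O(t^{-\infty})$, since $\beta(\cdot/\epsilon_0t)$ varies only on scale $t$. The Schwartz tail of $\sF_\fa\beta'$ handles the region $\|\zeta\|\gg t$.

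\textbf{Comparison.} Both arguments rest on the same two inputs, in dual form: $\beta'=1$ near the origin, and $\beta$ living at scale $\epsilon_0t$. The paper's version is a single uniform integration by parts with no case split. Yours makes the cancellation mechanism more explicit.

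\textbf{Simplification.} Your Taylor bound $O(t^{-N'})$ holds for every $\zeta$, not only in the annulus. Since $1+\|\zeta\|\lesssim t$ whenever $\|\zeta\|\le4\epsilon_0t$, you only need the support/tail argument for $\|\zeta\|\ge4\epsilon_0t$. The subcase $\|\zeta\|\le\epsilon_0t/2$ is therefore unnecessary.
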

\begin{proof}
    It may be seen that
    \begin{align*}
        \beta^{\rm tail}(\nu) =(\epsilon_0t)^r\sum_{w\in W}\int_\fa(\beta'(H)-1)(\sF_\fa^{-1}\beta) (\epsilon_0 t H)e^{-i\langle w\nu+t\lambda,H\rangle} dH,
    \end{align*}
    so it suffices to show that for arbitrary $\nu\in\fa^*$ and $N\geq1$
    \begin{align*}
        \int_\fa(1-\beta'(H))(\sF_\fa^{-1}\beta) (\epsilon_0 t H)e^{-i\langle \nu,H\rangle} dH\lesssim_N (1+\|\nu\|)^{-N} t^{-N}.
    \end{align*}
    This is immediate after integrating by parts and applying the Paley--Wiener condition for $\sF_\fa^{-1}\beta$.
\end{proof}

Now we can define two test functions $k_{t\lambda}^0$ and $\widetilde k_{t\lambda}^0$ in $C_c^\infty(G\sslash K)$ by fixing their Harish-Chandra transforms:
\begin{align}\label{eq: smooth test functions defn}
   \sH k_{t\lambda}^0 =h^0_{t\lambda},\quad\text{ and }\quad \sH \widetilde k_{t\lambda}^0 = \widetilde \beta^{\epsilon_0}_{t\lambda}h^0_{t\lambda}.
\end{align}
We define their associated automorphic kernels to be $K_{t\lambda}^0(x,y)=\sum_{\gamma\in\Gamma} k_{t\lambda}^0(x^{-1}\gamma y)$ and $\widetilde K_{t\lambda}^0(x,y)=\sum_{\gamma\in\Gamma} \widetilde k_{t\lambda}^0(x^{-1}\gamma y)$.

\begin{lemma}\label{lem: equivalent upper bound}
    Let $\bP_{t\lambda}$ be the spectral projector defined in \eqref{eq: spectral projector defn}.
    For any $p\geq2$, we have
    \[
    \|\bP_{t\lambda}\|_{L^2\to L^p}\lesssim\|K^0_{t\lambda}\|_{L^2\to L^p}\quad\text{ and }\quad \|\bP_{t\lambda}\|_{L^2\to L^p}\lesssim\|\widetilde K^0_{t\lambda}\|_{L^2\to L^p}.
    \]
\end{lemma}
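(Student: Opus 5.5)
The plan is to realize $\bP_{t\lambda}$ as $T\circ K^0_{t\lambda}$ (resp. $T\circ \widetilde K^0_{t\lambda}$), where $T$ is an operator that is bounded on $L^2$ with norm $O(1)$ and whose range lies in the span of the $\psi_\mu$ with $\|\mu-t\lambda\|\le 1$. Since $K^0_{t\lambda}$ and $\widetilde K^0_{t\lambda}$ are both diagonal in the basis $\{\psi_\mu\}$, with eigenvalues $\sH k^0_{t\lambda}(-\mu)=h^0_{t\lambda}(-\mu)$ and $\widetilde\beta^{\epsilon_0}_{t\lambda}(-\mu)h^0_{t\lambda}(-\mu)$, we can define $T$ spectrally by
\[
Tf=\sum_{\|\mu-t\lambda\|\le 1}\frac{\langle f,\psi_\mu\rangle}{m(\mu)}\,\psi_\mu ,
\]
where $m$ is the relevant multiplier. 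Then $\bP_{t\lambda}=K^0_{t\lambda}T=TK^0_{t\lambda}$, because all three operators commute. We may also write $\bP_{t\lambda} = K^0_{t\lambda}\circ T$, so for $p\ge 2$
\[
\|\bP_{t\lambda}f\|_{L^p}\le \|K^0_{t\lambda}\|_{L^2\to L^p}\,\|T\|_{L^2\to L^2}\,\|f\|_{L^2}.
\]
This reduces the lemma to showing $|m(\mu)|\gtrsim 1$ uniformly for $\|\mu-t\lambda\|\le 1$ in the unitary spectrum.

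\textbf{Lower bound for $h^0_{t\lambda}$.} Here the subtlety is that $\mu$ may be complex, since the unitary spectrum can contain non-tempered parameters, and the sign convention ($-\mu$ versus $\mu$) matters. Using Weyl invariance, $h^0_{t\lambda}(-\mu)=\sum_w h^0(t\lambda-w\mu)$. The identity term is $h^0(t\lambda-\mu)$ with $\|t\lambda-\mu\|\le1$; by \eqref{eq: condition for h0} it is $\ge1$ for real arguments.

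\textbf{Handling complex $\mu$.} For $\|\mu-t\lambda\|\le 1$ with $t$ large, the imaginary part of $\mu$ has size at most $1$, so I would use uniform continuity of $h^0$ on a complex neighborhood; if necessary one shrinks the radius in \eqref{eq: condition for h0}, or simply chooses $h^0$ to be, say, $\ge 1/2$ in real part on the complex unit ball, which is harmless. The remaining terms $w\ne e$ are evaluated at points $t\lambda-w\mu\approx t(\lambda-w\lambda)$. Since $\lambda\in B^*$ is regular, $\|\lambda-w\lambda\|\gtrsim 1$, so these arguments have real part of size $\gtrsim t$ and imaginary part bounded. By Paley--Wiener decay they are $O_N(t^{-N})$. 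Hence $|h^0_{t\lambda}(-\mu)|\ge 1/2$ for $t$ large, with small $t$ absorbed into constants (or handled by choosing $h^0$ appropriately).

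\textbf{Lower bound for $\widetilde\beta^{\epsilon_0}_{t\lambda}$.} For the second kernel we additionally need $|\widetilde\beta^{\epsilon_0}_{t\lambda}(-\mu)|\gtrsim 1$. The unmodified function satisfies $\beta^{\epsilon_0}_{t\lambda}(-\mu)\ge\beta\big((t\lambda-\mu)/(\epsilon_0t)\big)=1$ for real $\mu$ and $t\ge 1/\epsilon_0$. Lemma \ref{lem: PW tail estimate} then gives $|\beta^{\rm tail}|\lesssim t^{-N}$, which is small.

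\textbf{Main obstacle.} The real difficulty is again making this work for complex $\mu$ with bounded imaginary part. For $\beta^{\epsilon_0}_{t\lambda}$, which is not holomorphic, I would instead work directly with the entire function $\widetilde\beta^{\epsilon_0}_{t\lambda}$: write it as the Fourier integral of $\beta'(H)(\sF_\fa^{-1}\beta^{\epsilon_0}_{t\lambda})(H)$. Because $\beta'$ is compactly supported, replacing $\mu$ by $\Re\mu$ changes the value by $O(|\Im\mu|)\cdot$ (an $L^1$ bound uniform in $t$), which is too crude. A better route is to argue that the imaginary parts in the unitary spectrum near regular $t\lambda$ vanish for $t$ large. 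Nontempered unitary parameters have $\Re$ bounded while $\Im\mu\in$ convex hull of $W\rho$, and near a regular real direction, unitarity forces $\mu$ to be tempered once $t$ is large. In any case the rapid decay of the $w\ne e$ and tail terms, uniformly in a bounded complex strip (the same integration by parts as in Lemma \ref{lem: PW tail estimate}, now with $e^{|\Im\nu|\,\|H\|}$ bounded on $\supp\beta'$), gives the result.
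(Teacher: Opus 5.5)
Your proposal is correct and follows the paper's proof: you write $\bP_{t\lambda}=K^0_{t\lambda}\circ T$ (resp.\ $\widetilde K^0_{t\lambda}\circ T$), where $T$ is the spectral multiplier $1/h^0_{t\lambda}(-\mu)$ (resp.\ $1/(\widetilde\beta^{\epsilon_0}_{t\lambda}h^0_{t\lambda})(-\mu)$) on the window $\|\mu-t\lambda\|\le 1$, and $T$ is $L^2$-bounded because the multiplier is $\gtrsim 1$ there; you also spell out details the paper leaves implicit (the $w\neq e$ terms are $O(t^{-N})$ by regularity of $\lambda$, and $\beta^{\rm tail}$ is small). One correction to your last paragraph: non-tempered unitary spectral parameters need \emph{not} have bounded real part; the correct fact, which the paper also uses, is that the Hermitian condition $\bar\mu\in W\mu$ forces $\Re\mu$ to be singular whenever $\Im\mu\neq0$, and this still gives your conclusion that every $\mu$ within distance $1$ of the regular point $t\lambda$ is real once $t$ is large.
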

\begin{proof}
    Let $f\in L^2(X)$ with $f = \sum_{\mu} a_\mu \psi_\mu$, so
    \begin{align*}
        \bP_{t\lambda}f = \sum_{\|\mu-t\lambda\|\leq1}a_\mu\psi_\mu = \sum_{\|\mu-t\lambda\|\leq1}h_{t\lambda}^0(-\mu)\frac{a_\mu}{h_{t\lambda}^0(-\mu)}\psi_\mu = K_{t\lambda}^0\tilde{f},
    \end{align*}
    where
    \[
    \tilde{f} =  \sum_{\|\mu-t\lambda\|\leq1}\frac{a_\mu}{h_{t\lambda}^0(-\mu)}\psi_\mu.
    \]
    Thus we have
    \begin{align*}
        \|\tilde{f}\|^2_{L^2} =  \sum_{\|\mu-t\lambda\|\leq1}\left|\frac{a_\mu}{h_{t\lambda}^0(-\mu)}\right|^2 \lesssim \sum_{\|\mu-t\lambda\|\leq1}\left|{a_\mu}\right|^2 \leq\|{f}\|^2_{L^2}
    \end{align*}
    and 
    \begin{align*}
        \|\bP_{t\lambda}f\|_{L^p} = \|K_{t\lambda}^0\tilde{f}\|_{L^p} \leq \|K^0_{t\lambda}\|_{L^2\to L^p} \|\tilde{f}\|_{L^2}\lesssim\|K^0_{t\lambda}\|_{L^2\to L^p} \|{f}\|_{L^2}.
    \end{align*}
    This proves the first bound, and it is clear that the same proof gives the second bound as well.
\end{proof}

By the above lemma and a $TT^*$ argument, to prove \eqref{eq: thm main 2} it suffices to prove that $\widetilde K_{t\lambda}:=\widetilde K_{t\lambda}^0\circ (\widetilde K_{t\lambda}^0)^*$ has the following operator norm bound
\begin{align}\label{eq: TT* operator norm}
    \left\|\widetilde K_{t\lambda}\right\|_{L^{p_c'}\to L^{p_c}}\lesssim t^{2r/p_c}
\end{align}
where $p_c'$ is the dual exponent of  the critical exponent $p_c = \frac{2(n+r)}{n-r}$. It may be seen that one can take $\widetilde k_{t\lambda} = \widetilde k_{t\lambda}^0*\widetilde k_{t\lambda}^0$ so that $\widetilde K_{t\lambda}$ is associated with $\widetilde k_{t\lambda}$.  Note that $\widetilde K_{t\lambda}^0$ is actually self-adjoint because $\widetilde\beta_{t\lambda}^{\epsilon_0}h_{t\lambda}^0$ is real on $\fa^*$.
Let $h_{t\lambda} = (h_{t\lambda}^0)^2$,
so $\left(\widetilde \beta^{\epsilon_0}_{t\lambda}\right)^2h_{t\lambda} = \sH \widetilde k_{t\lambda}$.


\begin{lemma}\label{lem: Fourier of ht}
We let $g=g^0*g^0\in C_c^\infty(\fa)^W$. Then, for any $\nu\in\fa_\BC^*$,
    \[
    h_{t\lambda}(\nu)=\int_\fa \left(\sum_{w\in W} g(H) e^{-it\langle w\lambda,H\rangle}\right)e^{-i\langle \nu ,H\rangle} dH + \cE_{t\lambda}(\nu).
    \]
    Here $\cE_{t\lambda}\in\cP\cW(\fa^*)^W$ is an error term satisfying
    \begin{align}\label{eq: error estimates}
        \cE_{t\lambda}(\nu)\lesssim_N e^{O(\Im\nu)}\sum_{w\in W} (1+\|w(\Re\nu)+t\lambda\|)^{-N}t^{-N}.
    \end{align}
\end{lemma}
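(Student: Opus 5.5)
Expanding the square is the whole plan. By \eqref{eq: defn for h0t},
\[
h_{t\lambda}(\nu)=\Big(\sum_{w\in W}h^0(w\nu+t\lambda)\Big)^2=\sum_{w,w'\in W}h^0(w\nu+t\lambda)\,h^0(w'\nu+t\lambda).
\]
We sort the terms into diagonal ones ($w=w'$) and off-diagonal ones ($w\neq w'$).

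\textbf{Diagonal terms.} The diagonal part is $\sum_w (h^0)^2(w\nu+t\lambda)$. Since $g=g^0*g^0$, we have $\sF_\fa g=(h^0)^2$, up to the normalization constant of the measures. Then the same computation that gives \eqref{eq: g0 integral for h0t}, together with the $W$-invariance of $g$ (change variables $H\mapsto w^{-1}H$), shows
\[
\sum_w (h^0)^2(w\nu+t\lambda)=\int_\fa \Big(\sum_{w\in W} g(H)e^{-it\langle w\lambda,H\rangle}\Big)e^{-i\langle\nu,H\rangle}\,dH
\]
for all $\nu\in\fa^*_\BC$. This holds by analytic continuation, since both sides are entire in $\nu$.

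\textbf{Off-diagonal terms.} We define $\cE_{t\lambda}$ to be the sum of the terms with $w\neq w'$. It is a finite sum of products of Paley--Wiener functions. It is also $W$-invariant, because $h_{t\lambda}$ and the main term both are. Hence $\cE_{t\lambda}\in\cP\cW(\fa^*)^W$.

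\textbf{Bounding the error.} Write $\nu=\xi+i\eta$. The Paley--Wiener estimate for $h^0$ gives
\[
|h^0(w\nu+t\lambda)|\lesssim_N e^{O(\|\eta\|)}\,(1+\|w\xi+t\lambda\|)^{-N}.
\]
So each off-diagonal term is bounded by
\[
e^{O(\|\eta\|)}\,(1+\|w\xi+t\lambda\|)^{-N}(1+\|w'\xi+t\lambda\|)^{-N}.
\]

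The key geometric input is that $\lambda\in B^*$ is regular, with distance from the walls at least $\epsilon_0$. Therefore $\|w\lambda-w'\lambda\|\gtrsim 1$ for $w\neq w'$. By the triangle inequality,
\[
\|w\xi+t\lambda\|+\|w'\xi+t\lambda\|=\|\xi+tw^{-1}\lambda\|+\|\xi+tw'^{-1}\lambda\|\ge t\,\|w^{-1}\lambda-w'^{-1}\lambda\|\gtrsim t.
\]
Hence at least one of the two factors is $\gtrsim t$.

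This yields
\[
(1+\|w\xi+t\lambda\|)^{-2N}(1+\|w'\xi+t\lambda\|)^{-2N}\lesssim t^{-N}\,(1+\|w\xi+t\lambda\|)^{-N}.
\]
To see this, spend $N$ powers of the large factor to get $t^{-N}$, and keep $N$ powers of whichever factor involves $w$; if the large factor is the $w$ one, use $(1+\|\cdot\|)^{-2N}\le t^{-N}(1+\|\cdot\|)^{-N}$ instead. Renaming $N$ then gives \eqref{eq: error estimates}.

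\textbf{Main obstacle.} The only real step is the separation $\|w\lambda-w'\lambda\|\gtrsim1$. This is where the assumption that $B^*$ avoids the singular set is used: $W$ acts simply transitively on chambers, so $w\lambda\neq w'\lambda$ for regular $\lambda$, and the separation is uniform by compactness of $B^*$. Everything else is bookkeeping.
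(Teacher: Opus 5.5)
Your proof is correct and is essentially the paper's argument. The paper computes $\sF_\fa^{-1}h_{t\lambda}$ as the convolution of the two sums, keeps the diagonal terms $w_1=w_2$ as the main term, and bounds the off-diagonal terms by integrating by parts; after the substitution $H=X+Y$ those off-diagonal terms factor exactly into your products $h^0(w_1^{-1}\nu+t\lambda)\,h^0(w_2^{-1}\nu+t\lambda)$. Working on the Fourier side simply makes explicit the uniform separation $\|w\lambda-w'\lambda\|\gtrsim1$ over $B^*$ that the paper's integration by parts in $X$ uses implicitly (and with the paper's normalization, $\sF_\fa(g^0*g^0)=(h^0)^2$ holds exactly, with no extra constant).
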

\begin{proof}
    We first calculate the inverse Fourier inversion $\sF_\fa^{-1}$ of $h_{t\lambda}=(h_{t\lambda}^0)^2$, which is given by the following convolution from \eqref{eq: g0 integral for h0t}
    \begin{align*}
        &\int_\fa \left(\sum_{w_1\in W} g^0(H-X) e^{-it\langle w_1\lambda,H-X\rangle}\right)\left(\sum_{w_2\in W} g^0(X) e^{-it\langle w_2\lambda,X\rangle}\right) dX\\
        =&\sum_{w_1,w_2}e^{-it\langle w_1\lambda ,H\rangle} \int_\fa g^0(H-X)g^0(X) e^{it\langle w_1\lambda-w_2\lambda,X\rangle} dX\\
         =&\sum_{w\in W}g(H)e^{-it\langle w\lambda ,H\rangle} +\sum_{w_1\neq w_2}e^{-it\langle w_1\lambda ,H\rangle} \int_\fa g^0(H-X)g^0(X) e^{it\langle w_1\lambda-w_2\lambda,X\rangle} dX.
    \end{align*}
    Therefore,
    \begin{align*}
        \cE_{t\lambda}(\nu)=\sum_{w_1\neq w_2}\int_\fa\int_\fa g^0(H-X)g^0(X) e^{it\langle w_1\lambda-w_2\lambda,X\rangle} e^{-i\langle\nu+tw_1\lambda,H\rangle} dX dH
    \end{align*}
    By integrating by parts with respect to $X$ and with respect to $H$, it is clear that \eqref{eq: error estimates} holds.
\end{proof}

\subsection{Holomorphic family of operators}\label{sec: complex family of projectors}

Let $b_0\in C_c^\infty(\BR)$ be a (nonnegative) even cutoff function that is equal to 1 on $[-1,1]$ and zero outside $[-2,2]$, and let $b_1(x) = b_0(x)-b_0(2x)$. We have the following dyadic decomposition for any $H\in\fa$,
\[
    1= b_0\big(t^{n-r}\omega(H)\big) + \sum_{m=1}^{\infty}  b_1\big(2^{-m}t^{n-r}\omega(H)\big).
\]
We let $g^0,h^0$ be as in the last section and let $g=g^0*g^0$ as in Lemma \ref{lem: Fourier of ht}.
By the compact support of $g$, the decomposition
\[
    g(H)= g(H)\left(b_0\big(t^{n-r}\omega(H)\big) + \sum_{m=1}^{\infty} b_1\big(2^{-m}t^{n-r}\omega(H)\big)\right)
\]
is actually a finite sum.

For a complex variable $z\in\BC$, we define, for $H\in\fa$,
\begin{align*}
    g(H;z) =(2^{z}-2^{-\frac{r}{n-r}})t^{-(n-r)z}g(H)\left(b_0\big(t^{n-r}\omega(H)\big) + \sum_{m=1}^{\infty} 2^{mz}b_1\big(2^{-m}t^{n-r}\omega(H)\big)\right).
\end{align*}
As we have observed, the above is still actually a finite sum.
For $\nu\in\fa^*_\BC$, we let
\begin{align*}
    h(\nu;z)=\int_\fa \left(\sum_{w\in W} g(H;z) e^{-it\langle w\lambda,H\rangle}\right)e^{-i\langle \nu ,H\rangle} dH.
\end{align*}
Note that for any $z\in\BC$, $g(\cdot;z)\in C_c^\infty(\fa)^W$ so $h(\cdot;z)\in \mathcal{PW}(\fa^*)^W$ form a family of functions depending holomorphically on $z$. We will write
\[
g_0(H;z)=(2^{z}-2^{-\frac{r}{n-r}})t^{-(n-r)z}g(H)b_0\big(t^{n-r}\omega(H)\big) ,\]
and
\[g_m(H;z)=(2^{z}-2^{-\frac{r}{n-r}})t^{-(n-r)z}g(H) 2^{mz}b_1\big(2^{-m}t^{n-r}\omega(H)\big),\qquad m\geq1,
\]
so that $g(H;z) = \sum_{m=0}^{\infty} g_m(H;z)$. Correspondingly we write $h(\nu;z) = \sum_{m=0}^{\infty} h_m(\nu;z)$ with
\[
h_m(\nu;z) = \int_\fa \left(\sum_{w\in W} g_m(H;z) e^{-it\langle w\lambda,H\rangle}\right)e^{-i\langle \nu ,H\rangle} dH.
\]

Moreover, when $z=0$, we have $h(\nu;0)=(1-2^{-\frac{r}{n-r}})h_{t\lambda}(\nu)+O_N(t^{-N})$ by Lemma \ref{lem: Fourier of ht}. We define a holomorphic family of operators $\{T_z\}_{z\in\BC}$ on $X$ by the kernels
\[
    T_z(x,y)=\sum_{\gamma\in\Gamma}k(x^{-1}\gamma y;z),
\]
with
\[
    \big(\sH k(\cdot;z)\big)(\nu) = \left(\widetilde \beta^{\epsilon_0}_{t\lambda}(\nu)\right)^2(h(\nu;z)+(1-2^{-\frac{r}{n-r}})\cE_{t\lambda}(\nu)).
\]

\begin{proposition}\label{prop 6.4}
    When $\Re z \geq0$, the operator norm of $T_z:L^1(X)\to L^\infty(X)$ is $\lesssim \max\{  t^{(n-r)(1-\Re z)}, t^{\frac{n-r}{2}}\}$. When $\Re z = -\frac{r}{n-r}$, the operator norm of $T_z:L^2(X)\to L^2(X)$ is $\lesssim 1$.
\end{proposition}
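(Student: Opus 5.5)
For the $L^2\to L^2$ bound I will use the spectral expansion. $T_z$ acts on each $\psi_\mu$ by multiplication by $\sH k(\cdot;z)(-\mu)$, so it suffices to bound $\sup_{\mu}|\sH k(-\mu;z)|$ over the unitary spectrum. The spectrum consists of $\mu\in\fa^*$ together with finitely many complementary-series parameters $\mu$ with bounded imaginary part.

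The factor $(\widetilde\beta^{\epsilon_0}_{t\lambda})^2$ is bounded, by Lemma \ref{lem: PW tail estimate}. The error term $\cE_{t\lambda}$ is $O(1)$, by \eqref{eq: error estimates}, for bounded $\Im\mu$. So it remains to bound $h(\nu;z)$. By Weyl invariance of $g(\cdot;z)$, $h(\nu;z)$ is a sum over $w$ of $\sF_\fa g(\cdot;z)(\nu+tw\lambda)$.

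Now write $z=-\frac{r}{n-r}+iy$. Then $t^{-(n-r)z}=t^{r}t^{-i(n-r)y}$ and $2^z-2^{-\frac{r}{n-r}}=2^{-\frac{r}{n-r}}(2^{iy}-1)$. Hence each summand equals $t^r$ times a unimodular constant times the expression in Proposition \ref{prop: symbol estimates}, with $\chi=g$ and frequency $\nu+tw\lambda$ (real part). That proposition gives a bound $t^{-r}$, so $|h|\lesssim 1$.

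For complex $\nu$ with bounded imaginary part, I would absorb $e^{\langle\Im\nu,H\rangle}$ into $\chi$; this is harmless since $g$ is compactly supported. For $\Re z\ge 0$ nothing is needed there, but I must still check that $h$ stays bounded on the tempered spectrum. That is only needed for the $L^2$ claim, which is stated at $\Re z=-\frac{r}{n-r}$, so this case is covered.

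For the $L^1\to L^\infty$ bound, it suffices to show $\sup_{x,y}|T_z(x,y)|\lesssim \max\{t^{(n-r)(1-\Re z)},t^{\frac{n-r}2}\}$. Since $k(\cdot;z)$ has support in a fixed compact set (Paley--Wiener, with the support of $g$ and $\beta'$ fixed) and $\Gamma$ is cocompact, only boundedly many $\gamma$ contribute. So I must bound $|k(g;z)|$ uniformly in $g$. By inversion,
\[
k(g;z)=\int_{\fa^*_+}(\widetilde\beta^{\epsilon_0}_{t\lambda})^2(\nu)\,\big(h(\nu;z)+c\,\cE_{t\lambda}(\nu)\big)\varphi_\nu(g)\,d\mu(\nu).
\]
The $\cE$ contribution is $O(t^{-N})$ after integrating against $|\bc|^{-2}\lesssim(1+\|\nu\|)^{n-r}$. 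I also replace $\widetilde\beta$ by $\beta^{\epsilon_0}_{t\lambda}$, with a rapidly decaying error from Lemma \ref{lem: PW tail estimate}. This localizes $\nu$ to $\|\nu-t\,W\lambda\|\le 2\epsilon_0t$. I rescale $\nu=t\nu'$ with $\nu'$ in a compact set $B^*$ away from the walls, and use the Harish-Chandra integral \eqref{eq: HC integral for varphi}. Since $|\bc(t\nu')|^{-2}$ is a symbol of order $n-r$, the result is
\[
k\approx t^{n}\sum_m\int_\fa g_m(H;z)\,e^{-it\langle w\lambda,H\rangle}\int_{B^*}\beta(\cdot)\,e^{-it\langle\nu',H\rangle}\,\varphi_{t\nu'}(\exp X)\,a(t\nu')\,d\nu'\,dH,
\]
where $g\in K\exp(X)K$.

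Next I apply Theorem \ref{thm: Weyl Harish-Chandra symbol} to $\varphi_{t\nu'}(\exp X)$, with amplitude $e^{\langle\rho,A\rangle}$. This gives $\sum_{w'}e^{it\langle\nu',w'X\rangle}a_{w',t}(X,\nu')$, with symbol bounds $\prod_\alpha(1+t|\alpha(X)|)^{-m_\alpha/2}$ in all $\nu'$-derivatives. Integrating by parts in $\nu'$ then gives the bound
\[
t^{n}\,\prod_\alpha(1+t|\alpha(X)|)^{-m_\alpha/2}\,(1+t\|H-w'X\|)^{-N}.
\]
The $H$-integral localizes to $H$ within $t^{-1}$ of $w'X$, which costs $t^{-r}$. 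What remains to bound is
\[
t^{n-r}\sum_m |g_m(w'X;z)|\,\prod_\alpha(1+t|\alpha(X)|)^{-m_\alpha/2}.
\]
On the support of $g_m$ with $m\ge1$, we have $t^{n-r}|\omega(H)|\sim 2^m$ with $H\approx w'X$, and then the product is at most $(1+2^{m})^{-1/2}$ (with the obvious modification near the scale $t^{-1}$, handled by the blurring over $t^{-1}$-balls). Also $|g_m|\lesssim t^{-(n-r)\Re z}2^{m\Re z}$. So the sum is bounded by
\[
t^{n-r}t^{-(n-r)\Re z}\sum_{m\lesssim(n-r)\log t}2^{m(\Re z-\frac12)}.
\]
For $\Re z<\frac12$ this is $t^{(n-r)(1-\Re z)}$. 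For $\Re z>\frac12$ it is $t^{(n-r)(1-\Re z)}t^{(n-r)(\Re z-\frac12)}=t^{\frac{n-r}2}$.

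The main obstacle is $\Re z=\frac12$ exactly, where the sum produces a logarithmic loss. For the final interpolation only $\Re z=\frac12$ matters, and the stated max would then be $t^{(n-r)/2}$. I expect to handle this by using $(1+t|\alpha(X)|)^{-1/2}\le(1+t|\alpha(X)|)^{-1/2+\epsilon}$ only where needed. Alternatively, I would exploit the fact that a given $X$ lies in the support of only $O(1)$ of the $g_m$: the $b_1$ pieces have overlapping supports only for adjacent $m$, and the $t^{-1}$-blurring changes $t^{n-r}\omega$ only by a bounded factor once $t|\alpha|\gtrsim1$. With that observation, no sum over $m$ appears and the bound is the maximum of single terms, which is exactly $\max\{t^{(n-r)(1-\Re z)},t^{\frac{n-r}2}\}$. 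Making this localization rigorous, uniformly near the walls where $t|\alpha(X)|\lesssim1$, is the delicate step.
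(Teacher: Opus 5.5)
\textbf{There is a genuine gap at $\Re z=\tfrac12$.}

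Your $L^2\to L^2$ argument is correct and is essentially the paper's. You bound the multiplier on the spectrum, discard $\cE_{t\lambda}$, absorb $e^{\langle\Im\mu,H\rangle}$ into the cutoff, factor $t^{-(n-r)z}=t^{r}t^{-i(n-r)y}$, and apply Proposition \ref{prop: symbol estimates} with $\chi=g$. (In higher rank the non-tempered parameters need not be finitely many, but you only use $\|\Im\mu\|\le\|\rho\|$, which is also all the paper uses.) Your $L^1\to L^\infty$ argument follows the paper's route up to the bound $t^{n}\prod_\alpha(1+t|\alpha(X)|)^{-m_\alpha/2}(1+t\|H-w'X\|)^{-N}$ for the spectral integral, which is Lemma \ref{lem: bd for spectral integral}.

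The problem is the next step, for two reasons.
\begin{itemize}
\item Freezing $H$ at $w'X$ is not justified. Near the walls, the pieces $b_1(2^{-m}t^{n-r}\omega(H))$ are not approximately constant on $t^{-1}$-balls.
\item Summing sup-norm bounds over all $\approx(n-r)\log t$ dyadic pieces gives $t^{(n-r)/2}\log t$ at $\Re z=\frac12$.
\end{itemize}
This endpoint is not harmless. $\Re z=\frac12$ is exactly the line that produces $p_c$ in the interpolation; lines $\Re z<\frac12$ only give $p>p_c$. A $\log t$ loss there reintroduces a power of $\log t$ at $p_c$, which is precisely what the theorem removes. Your first remedy, weakening the exponent $-\frac12$ to $-\frac12+\epsilon$, only makes the $m$-sum worse. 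Your second remedy (only $O(1)$ pieces overlap) is the right instinct but is left unproven, including the near-wall case you flag.

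\textbf{The missing step.} Do not freeze $H$. Keep the $H$-integral and bound the whole dyadic sum pointwise in $H$. At each $H$ only $O(1)$ of the $g_m(H;z)$ are nonzero, so for $\Re z\ge0$
\[
\Bigl|\sum_{m}g_m(H;z)\Bigr|\lesssim t^{-(n-r)\Re z}+|\omega(H)|^{\Re z}\lesssim\prod_{\alpha\in\Delta^+}\bigl(t^{-1}+|\alpha(H)|\bigr)^{m_\alpha\Re z}.
\]
The elementary inequality $t^{-1}+|\alpha(H)|\lesssim(t^{-1}+|\alpha(wX)|)(1+t\|H-wX\|)$ then turns this into
\[
t^{-(n-r)\Re z}\,(1+t\|H-wX\|)^{(n-r)\Re z}\prod_{\alpha\in\Delta^+}(1+t|\alpha(X)|)^{m_\alpha\Re z}.
\]
The polynomial factor in $t\|H-wX\|$ is absorbed by the $(1+t\|H-wX\|)^{-N}$ decay. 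The $H$-integral then gives
\[
t^{(n-r)(1-\Re z)}\prod_{\alpha}(1+t|\alpha(X)|)^{m_\alpha(\Re z-\frac12)}\lesssim\max\{t^{(n-r)(1-\Re z)},t^{\frac{n-r}{2}}\},
\]
using $1+t|\alpha(X)|\lesssim t$ on compact $B$ when $\Re z>\frac12$. The regularized quantity $t^{-1}+|\alpha(\cdot)|$ is stable under $t^{-1}$-perturbations up to the factor $1+t\|H-wX\|$. So this handles points near the walls with no case distinction, and no logarithm appears. This is how the paper proves the bound.
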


When $z=0$, by Lemma \ref{lem: Fourier of ht}, the operator $T_0$ is equal to $(1-2^{-\frac{r}{n-r}})\widetilde K_{t\lambda}$.
Therefore, \eqref{eq: TT* operator norm} follows from Stein's complex interpolation \cite{Ste56} for the holomorphic family of operators $\{T_z\}$ in the vertical strip $-\frac{r}{n-r}\leq\Re z\leq \frac{1}{2}$, and thus the critical exponent estimate \eqref{eq: thm main 2} follows.

\begin{lemma}\label{lem: bd for spectral integral}
    Let $g$ be as above and let $B\subset\fa$ be a compact subset.
    We can choose $\epsilon_0$ to be sufficiently small so that the following holds.
    Given $X\in B$ and $H\in\supp(g)$, for any $N\geq1$,
    \begin{align*}
        \int_{\fa^*}e^{-i\langle \nu,H\rangle}\left(\widetilde \beta^{\epsilon_0}_{t\lambda}(\nu)\right)^2\varphi_\nu(e^X) d\mu(\nu)\lesssim t^{n}\left(\prod_{\alpha\in\Delta^+}(1+t|\alpha(X)|)^{-m_\alpha/2} \right)\sum_{w\in W}\left( 1+t\|H-wX\| \right)^{-N},
    \end{align*}
    where the implied constant depends only on $N$, $B$, $g$ and $\epsilon_0$.
\end{lemma}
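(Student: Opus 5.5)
We plan to insert the Harish-Chandra integral formula \eqref{eq: HC integral for varphi} for $\varphi_\nu(e^X)$ and use Theorem \ref{thm: Weyl Harish-Chandra symbol}. The cutoff $\widetilde\beta^{\epsilon_0}_{t\lambda}$ is, up to the tail of Lemma \ref{lem: PW tail estimate}, supported where $\nu\in -tw'(\lambda+\epsilon_0$-ball$)$. By Weyl invariance of $\varphi_\nu$, of the Plancherel density and of $\widetilde\beta$, we reduce to the piece near $\nu=t\lambda'$ (up to sign), where $\lambda'$ ranges over a compact set $B_1^*$ bounded away from the singular set. Choosing $\epsilon_0$ smaller than the distance from $B^*$ to the walls guarantees this. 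We then write $\nu=t\eta$ with $\eta\in B_1^*$, so $d\mu(\nu)=t^r|\bc(t\eta)|^{-2}d\eta$.

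\textbf{Step 1 (main terms).} By the Gindikin--Karpelevich formula, $|\bc(t\eta)|^{-2}$ is a symbol of order $n-r$ in $t\eta$ on this region. Writing it as $t^{n-r}p_t(\eta)$, the function $p_t$ has all $\eta$-derivatives bounded uniformly in $t$. Next,
\[
\varphi_{t\eta}(e^X)=\int_K e^{it\langle\eta,A(ke^X)\rangle}e^{\langle\rho,A(ke^X)\rangle}dk=I_t(c;X,\eta),
\]
with $c=e^{\langle\rho,A(ke^X)\rangle}$ smooth. Theorem \ref{thm: Weyl Harish-Chandra symbol} gives $\sum_w e^{it\langle\eta,wX\rangle}a_{w,t}(X,\eta)$, with $\eta$-derivative bounds $\prod_\alpha(1+t|\alpha(X)|)^{-m_\alpha/2}$. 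The integral becomes a sum over $w$ of
\[
t^{n}\int e^{it\langle\eta,wX-H\rangle}\,\widetilde\beta(t\eta)^2p_t(\eta)a_{w,t}(X,\eta)\,d\eta .
\]

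\textbf{Step 2 (cutoff).} We replace $\widetilde\beta^{\epsilon_0}_{t\lambda}$ by $\beta^{\epsilon_0}_{t\lambda}$. In the variable $\eta$, the latter is $\beta((\eta-\lambda)/\epsilon_0)$ summed over $W$, a fixed smooth compactly supported function. The difference is controlled by Lemma \ref{lem: PW tail estimate}: it is $O(t^{-N}(1+\|\nu\|)^{-N})$. Using the trivial bounds $|\varphi_\nu|\le1$ and polynomial growth of $d\mu$, this contributes $O(t^{-N})$. That is acceptable because the right-hand side of the claimed bound is $\gtrsim t^{n-(n-r)/2-N}$ on compact sets. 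The cross term is handled the same way.

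\textbf{Step 3 (nonstationary phase).} The amplitude $\beta(\cdot)^2p_ta_{w,t}$ is compactly supported in $\eta$, with every $\eta$-derivative bounded by $\prod_\alpha(1+t|\alpha(X)|)^{-m_\alpha/2}$. Integrating by parts $N$ times along $wX-H$ gives a factor $(1+t\|H-wX\|)^{-N}$, which yields the claim.

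The delicate point is Step 1, in two respects. First, Theorem \ref{thm: Weyl Harish-Chandra symbol} assumes $\nu\in B^*\subset\fa_+^*$, so the Weyl-group reduction of the spectral support must be done carefully, including the sign convention $\varphi_{-\nu}=\varphi_\nu$. Second, we must check that $|\bc|^{-2}$ has genuine symbol behaviour in $\eta$ uniformly in $t$ away from the walls. For the latter we expect that the products of Gamma-function ratios from Gindikin--Karpelevich, via Stirling's formula, give $t^{n-r}$ times a smooth function of $\eta$ with derivatives bounded uniformly in $t$.
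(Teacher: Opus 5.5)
Your proposal is correct and follows the paper's proof essentially step for step. Like the paper, you discard the tail of $\widetilde\beta^{\epsilon_0}_{t\lambda}$ via Lemma \ref{lem: PW tail estimate}, rescale $\nu=t\eta$, expand $\varphi_{t\eta}(e^X)$ with Theorem \ref{thm: Weyl Harish-Chandra symbol}, treat $|\bc(t\eta)|^{-2}$ as $t^{n-r}$ times a symbol with uniformly bounded $\eta$-derivatives (Gindikin--Karpelevich and Stirling), and integrate by parts.

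One correction to your aside: $\varphi_{-\nu}=\varphi_\nu$ fails unless $-1\in W$, and used literally it would turn $\|H-wX\|$ into $\|H+wX\|$. You do not need it, though: with $w_0$ the longest Weyl element, $-W\lambda=W(-w_0\lambda)$ and $-w_0\lambda\in\fa^*_+$ is regular, so Weyl invariance alone puts a piece of the spectral support in the positive chamber.
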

\begin{proof}
    By Lemma \ref{lem: PW tail estimate}, we have
    \begin{align*}
        \left(\widetilde \beta^{\epsilon_0}_{t\lambda}(\nu)\right)^2=\left(\sum_{w\in W} \beta\left(\frac{w\nu+t\lambda}{\epsilon_0t} \right)+O_N\left((1+\|w\nu+t\lambda\|)^{-N} t^{-N}\right)  \right)^2 \\
        =\sum_{w\in W} \beta\left(\frac{w\nu+t\lambda}{\epsilon_0t}  \right)^2+O_N\left(\sum_{w\in W}(1+\|w\nu+t\lambda\|)^{-N} t^{-N}\right),
    \end{align*}
    so by the change of variables $\nu\mapsto t\nu$, the integral considered in the lemma is
    \begin{align}\label{eq: 6.2}
        \sum_{w\in W}\int_{\fa^*}e^{-it\langle \nu,H\rangle} \beta\left(\frac{w\nu+\lambda}{\epsilon_0}\right)^2\varphi_{t\nu}(e^X) d\mu(t\nu) + O_N(t^{-N}).
    \end{align}
    By choosing $\epsilon_0$ sufficiently small, there exists $\Omega^* \in \fa^*_{+}$ such that $\nu$ in \eqref{eq: 6.2} is supported in $W\Omega^*$. Using \eqref{eq: HC integral for varphi} and applying Theorem \ref{thm: Weyl Harish-Chandra symbol} to $\beta\left(\frac{\nu+\lambda}{\varepsilon_0}\right)^2\varphi_{t\nu}(e^X)$, there exist coefficients $a_{w,t}(X,\nu)$ which are smooth in $\nu$ and satisfy the derivative bounds \eqref{eq:symbol}, so that the main term of \eqref{eq: 6.2} is equal to
    \begin{align*}
        \sum_{w,w'\in W}\int_{\fa^*}a_{w',t}(X,\nu)e^{it\langle \nu,-wH+w'X\rangle} d\mu(t\nu).
    \end{align*}
    Write $d\mu(t\nu) =M(t\nu) t^r d\nu$ where $M(t\nu) = \abs{\bc(t\nu)}^{-2}$ is the Plancherel density.  By the Gindikin--Karpelevic product formula and Stirling's formula, the factor $M(t\nu)$ has size $\lesssim t^{n-r}$, depending only on $B^*$, and the same holds for all derivatives in $\nu$, using the regular assumption on $B^*$ and the Cauchy integral formula. Therefore \eqref{eq: 6.2} is of the form
    \begin{align*}
        \sum_{w,w'\in W}\int_{\fa^*}\Big(t^{r}a_{w',t}(X,\nu)M(t\nu)\Big)e^{it\langle \nu,-wH+w'X\rangle} d\nu,
    \end{align*}
    with the amplitude $t^r a_{w',t}(X,\nu)M(t\nu)$ and its derivatives in $\nu$ bounded by
    \[
    \lesssim t^{n}\prod_{\alpha\in\Delta^+}(1+t|\alpha(X)|)^{-m_\alpha/2},
    \]
    so we finish the proof by integrating by parts $N$ times.
\end{proof}

\begin{proof}[Proof of Proposition \ref{prop 6.4}]
By Young's inequality, $\|T_z\|_{L^1\to L^\infty}$ is dominated by the sup-norm of $k(\cdot;z)$. Moreover, by the bi-$K$-invariance, we need to show that, for $X\in\overline{\fa^+}$ and $\Re z \geq0$,
\begin{multline*}
    k(e^X;z)=\frac{1}{|W|}\int_{\fa^*} \left(\sum_{m=0}^{\infty}\int_\fa \left(\sum_{w\in W} g_m(H;z) e^{-it\langle w\lambda,H\rangle}\right)e^{-i\langle \nu ,H\rangle} dH +(1-2^{-\frac{r}{n-r}})\cE_{t\lambda}(\nu) \right)\\
    \left(\widetilde \beta^{\epsilon_0}_{t\lambda}(\nu)\right)^2\varphi_\nu(e^X) d\mu(\nu)
\end{multline*}
is $\lesssim \max\{  t^{(n-r)(1-\Re z)}, t^{\frac{n-r}{2}}\}$. By Lemma \ref{lem: Fourier of ht}, it is clear that the contribution of the term with $\cE_{t\lambda}$ 
\begin{align*}
    \frac{1}{|W|}\int_{\fa^*} (1-2^{-\frac{r}{n-r}})\cE_{t\lambda}(\nu) \left(\widetilde \beta^{\epsilon_0}_{t\lambda}(\nu)\right)^2\varphi_\nu(e^X) d\mu(\nu)\lesssim_N t^{-N}
\end{align*}
is negligible. Therefore, to bound $\|T_z\|_{L^1\to L^\infty}$, it suffices to prove
\begin{align}
    \int_\fa\int_{\fa^*}  \sum_{m=0}^{\infty}g_m(H;z) e^{-i\langle \nu+t\lambda,H\rangle}\left(\widetilde \beta^{\epsilon_0}_{t\lambda}(\nu)\right)^2\varphi_\nu(e^X)  d\mu(\nu)dH\lesssim  \max\{  t^{(n-r)(1-\Re z)}, t^{\frac{n-r}{2}}\}\label{eq: L1Linfinity integral bound}.
\end{align}
By the Paley--Wiener theorem, $k(\cdot;z)$ is supported in a compact set, so we will fix a compact subset $B\subset\overline{\fa^+}$ and assume that $X\in B$.

    We first notice that for each $H\in\supp g$, there are at most $O(1)$ many $m$'s so that $H\in\supp g_m$, so for $\Re z \geq0$, one has
    \begin{align*}
        \sum_{m=0}^{\infty}g_m(H;z) \lesssim t^{-(n-r)\Re z } + |\omega(H)|^{\Re z } \lesssim \prod_{\alpha\in\Delta^+} (t^{-1}+|\alpha(H)|)^{m_\alpha \Re z }.
    \end{align*}
    For each positive root $\alpha\in\Delta^+$
    \begin{align*}
        t^{-1}+|\alpha(H)|\lesssim (t^{-1}+|\alpha(wX)|)(1+t\|H-wX\|),
    \end{align*}
    so taking the product gives
    \begin{align*}
        \sum_{m=0}^{\infty}g_m(H;z)  \lesssim &\prod_{\alpha\in\Delta^+} (t^{-1}+|\alpha(H)|)^{m_\alpha\Re z } \\
        \lesssim &(1+t\|H-wX\|)^{(n-r)\Re z }\prod_{\alpha\in\Delta^+} (t^{-1}+|\alpha(wX)|)^{m_\alpha \Re z }\\
        =&t^{-(n-r)\Re z }(1+t\|H-wX\|)^{(n-r)\Re z }\prod_{\alpha\in\Delta^+} (1+t|\alpha(X)|)^{m_\alpha\Re z }.
    \end{align*}
    We combine this with Lemma \ref{lem: bd for spectral integral} to obtain
    \begin{align*}
        &\int_\fa\int_{\fa^*}  \sum_{m=0}^{\infty}g_m(H;z) e^{-i\langle \nu+t\lambda,H\rangle}\left(\widetilde \beta^{\epsilon_0}_{t\lambda}(\nu)\right)^2\varphi_\nu(e^X)  d\mu(\nu)dH\\
        \lesssim & \sum_{w\in W} \int_{\fa} t^n t^{-(n-r)\Re z}\prod_{\alpha\in\Delta^+} (1+t|\alpha(X)|)^{m_\alpha(\Re z-\frac{1}{2}) } (1+t\|H-wX\|)^{-N} dH\\
        \lesssim & t^{(n-r)(1-\Re z)}\prod_{\alpha\in\Delta^+} (1+t|\alpha(X)|)^{m_\alpha(\Re z-\frac{1}{2}) }\\
        \lesssim & t^{(n-r)(1-\Re z)} \max\{  1, t^{(n-r)(\Re z-\frac{1}{2})}\},
    \end{align*}
    which proves \eqref{eq: L1Linfinity integral bound}.

Now we prove the estimate when $\Re z=-\frac{r}{n-r}$.
We note that $\|T_z\|_{L^2\to L^2}$ is equal to $$\sup \left\{\left|\left(\widetilde \beta^{\epsilon_0}_{t\lambda}(-\nu)\right)^2(h(-\nu;z)+(1-2^{-\frac{r}{n-r}})\cE_{t\lambda}(-\nu))\right|:\nu\in\cS \right\},$$
where $\cS\subset\fa^*_\BC/W$ is the set of spectral parameters of the joint eigenfunctions in $L^2(X)$. If $\nu\in\cS$, then either $\nu\in\fa^*$ or $\Re(\nu)$ is singular and $\|\Im(\nu)\|\leq\|\rho\|$. By Lemma \ref{lem: Fourier of ht} we can clearly ignore the $\cE_{t\lambda}$ term, and it suffices to prove that
    \begin{align}\label{eq: int 6.8}
       (2^{z}-2^{-\frac{r}{n-r}})t^{-(n-r)z} \int_\fa g(H)\left(b_0\big(t^{n-r}\omega(H)\big) + \sum_{m=1}^{\infty} 2^{mz}b_1\big(2^{-m}t^{n-r}\omega(H)\big)\right)e^{i\langle \mu,H\rangle} dH\lesssim 1
    \end{align}
    uniformly for $\mu\in\fa^*_\BC$ with $\Im\mu\lesssim1$. By modifying $g\in C_c^\infty(\fa)$, we will further assume that $\mu \in\fa^*$. If $\Re z=-\frac{r}{n-r}$, by writing $z=-\frac{r}{n-r}+iy$ with $y\in\BR$, the integral from \eqref{eq: int 6.8} becomes
    \[
    2^{-\frac{r}{n-r}}t^{r-i(n-r)y} (2^{iy}-1)\int_\fa g(H)\left(b_0\big(t^{n-r}\omega(H)\big) + \sum_{m=1}^{\infty}2^{m(-\frac{r}{n-r}+iy)}  b_1\big(2^{-m}t^{n-r}\omega(H)\big)\right)e^{i\langle \mu,H\rangle} dH.
    \]
    Then by Proposition \ref{prop: symbol estimates}, \eqref{eq: int 6.8} holds.
\end{proof}

\subsection{Sharpness}\label{sec: sharpness}
In this section, we prove the sharpness of the bounds \eqref{eq: thm main 2 general p}. 

Note that we can show the sharpness of
\[
\|\bP^Q_{t\lambda} f\|_{L^{p}(X)}\lesssim t^{r\delta(n/r,p)} \|f\|_{L^2(X)}
\]
instead with a fixed constant $Q\geq 1$.
When $p\geq p_c$, point-concentrated quasi-modes would saturate the bounds \eqref{eq: thm main 2 general p}.
By the local Weyl law (see e.g. \cite[Theorem 8.5]{DKV79}), and by picking the constant $Q$ sufficiently large, we can find $Z_{t\lambda}\in C^\infty(X)$ such that $Z_{t\lambda}=\bP^Q_{t\lambda}Z_{t\lambda}$, $\|Z_{t\lambda} \|_{L^2(X)}=1$ and $\|Z_{t\lambda} \|_{L^\infty(X)} = |Z_{t\lambda} (x)|\gtrsim t^{\frac{n-r}{2}}$ for some $x\in X$. 
Let $\Delta$ be the Laplace--Beltrami operator on $X$. By \cite[Chapter II, Corollary 5.20]{helgason1984groups}, for any Maass form $\psi_\mu$ with tempered spectral parameter $\mu\in\fa^*$ we have
\[
    \Delta \psi_\mu + (\|\mu\|^2+\|\rho\|^2)\psi_\mu=0.
\]
Let $P=\sqrt{-\Delta-\|\rho\|^2}$, and fix $\eta \in C_0^\infty((1/4,4))$ such that $\eta=1$ on $[1/2,2]$.
Then by the definition of $\bP_{t\lambda}$
in \eqref{eq: spectral projector defn},
it is not hard to check that $\eta(\frac{P}{t\|\lambda\|})\bP^Q_{t\lambda}=\bP^Q_{t\lambda}$. 
Indeed, if $\|\mu-t\lambda\|\leq Q$, then
$$
\big|\|\mu\|-t\|\lambda\|\big|
\leq \|\mu-t\lambda\|
\leq Q,
$$
and hence $\|\mu\|/(t\|\lambda\|)\in[1/2,2]$ for sufficiently large $t$.
We also have the Bernstein type inequality
\begin{equation}\label{Berstein}
   \|\eta(P/t\|\lambda\|)\|_{L^p\rightarrow L^q}\lesssim t^{\frac{n}{p}-\frac{n}{q}},\qquad 1\le p\le q\le \infty.
\end{equation}
See e.g.,
 \cite[Theorem 4.3.1]{SFIOII} and its proof for more details. 
Therefore, for the function $Z_{t\lambda} $ constructed above, which lies in the range of $\bP^Q_{t\lambda}$,
$$
\|Z_{t\lambda} \|_{L^\infty(X)}=\left\|\eta(P/t\|\lambda\|)Z_{t\lambda} 
\right\|_{L^\infty(X)}
\lesssim
t^{n/p}\|Z_{t\lambda} \|_{L^p(X)}.
$$
Combining this with the lower bound $\|Z_{t\lambda} \|_{L^\infty(X)}\gtrsim t^{\frac{n-r}{2}}$,
we obtain
\[
\|Z_{t\lambda} \|_{L^p(X)}
\gtrsim
t^{\frac{n-r}{2}-\frac np},
\qquad p\geq2 .
\]
This proves the sharpness for $p\geq p_c$.

To prove the sharpness in the range $2\leq p\leq p_c$, we need to reduce this to proving the sharpness of the corresponding bounds for the approximate spectral projector $K^0_{t\lambda}$, which is defined by \eqref{eq: defn for h0t} and \eqref{eq: smooth test functions defn}.

\begin{proposition}
    Let $B^*\subset \fa^*_+$ be a compact subset. Assume that $B^*$ has nonempty interior. Suppose that for some $p\geq2$ there exists an improved bound
    \begin{align}\label{eq: improved bound for genuine projector}
        \|\bP_{t\mu}\|_{L^2\to L^p} = o(t^{r\delta(n/r,p)})
    \end{align}
    for any $t\geq1$ and $\mu\in B^*$. Then for any $\lambda$ in the interior of $B^*$ we have
    \begin{align*}
        \|K^0_{t\lambda}\|_{L^2\to L^p} = o(t^{r\delta(n/r,p)})
    \end{align*}
    for any $t\geq1$.
\end{proposition}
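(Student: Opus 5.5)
The plan is to decompose the spectrum into unit-size cells and write $K^0_{t\lambda}$ as a weighted sum of genuine spectral projectors $\bP_{t\mu}$, with weights given by $h^0_{t\lambda}$. The rapid decay of $h^0$ makes the sum converge. I read the hypothesis \eqref{eq: improved bound for genuine projector} as uniform in $\mu\in B^*$, i.e.\ $\sup_{\mu\in B^*}\|\bP_{t\mu}\|_{L^2\to L^p}=o(t^{r\delta(n/r,p)})$; if only pointwise smallness in $\mu$ is available, the finitely many centres below still give the result for each fixed $J$.

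\textbf{Cell decomposition.} Fix a lattice $\Lambda_0\subset\fa^*$ of small mesh, say $(2\sqrt r)^{-1}\mathbb{Z}^r$ in orthonormal coordinates. Partition $\fa^*$ into the translated cells $C_j=t\lambda+j+[0,(2\sqrt r)^{-1})^r$, $j\in\Lambda_0$, so that each $C_j$ lies in the unit ball around $t\lambda+j$. Choose $W$-representatives of the tempered spectral parameters, taking the representative closest to $t\lambda$. Let $\Pi_j$ denote the orthogonal projection onto the span of those $\psi_\mu$ whose representative lies in $C_j$. Let $\Pi_{\rm nt}$ be the projection onto the non-tempered part, where $\Re\mu$ is singular and $\|\Im\mu\|\le\|\rho\|$.

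Since $\Pi_j=\bP_{t\lambda+j}\Pi_j$, we get $\|\Pi_j\|_{L^2\to L^p}\le\|\bP_{t\lambda+j}\|_{L^2\to L^p}$. Moreover $K^0_{t\lambda}$ commutes with $\Pi_j$ and acts on its range with norm at most $c_j:=\sup_{\mu\in C_j}|h^0_{t\lambda}(-\mu)|$. Hence
\[
\|K^0_{t\lambda}\|_{L^2\to L^p}\le \sum_{j}c_j\,\|\bP_{t\lambda+j}\|_{L^2\to L^p}+\|K^0_{t\lambda}\Pi_{\rm nt}\|_{L^2\to L^p}.
\]
Because $h^0\in\cP\cW$, the definition \eqref{eq: defn for h0t} and the choice of representatives give $c_j\lesssim_N(1+\|j\|)^{-N}$. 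On the non-tempered spectrum, $\Re\mu$ is singular and therefore at distance $\gtrsim t$ from $tW\lambda$, so $|h^0_{t\lambda}(-\mu)|\lesssim_N t^{-N}$ there.

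\textbf{Splitting the sum.} Fix $\varepsilon>0$ small, so that $\lambda+\xi\in B^*$ whenever $\|\xi\|\le\varepsilon$; this uses that $\lambda$ is interior. For $J$ large and fixed, split the $j$-sum into three ranges.

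\begin{enumerate}
\item For $\|j\|\le J$, write $t\lambda+j=t\mu_j$ with $\mu_j=\lambda+j/t\in B^*$. There are $O(J^r)$ such terms, each $o(t^{r\delta(n/r,p)})$ by the hypothesis.
\item For $J<\|j\|\le\varepsilon t$, again $\mu_j\in B^*$, so Theorem~\ref{thm: main 2}, which is uniform over $B^*$, gives $\|\bP_{t\mu_j}\|_{L^2\to L^p}\lesssim t^{r\delta(n/r,p)}$. This part is therefore $\lesssim t^{r\delta(n/r,p)}\sum_{\|j\|>J}(1+\|j\|)^{-N}\lesssim J^{-1}t^{r\delta(n/r,p)}$.
\item For $\|j\|>\varepsilon t$, use the crude bound $\|\bP_{t\lambda+j}\|_{L^2\to L^p}\lesssim(t+\|j\|)^{C}$. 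This follows from the local Weyl law together with the $L^\infty$ bound, or from Sogge's bound for the Laplacian spectral window containing the cell. Combined with $c_j\lesssim(1+\|j\|)^{-N}\lesssim t^{-N/2}(1+\|j\|)^{-N/2}$, the sum is $O(t^{-10})$.
\end{enumerate}

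The non-tempered part is handled the same way: $\Pi_{\rm nt}$ has $L^2\to L^p$ norm bounded by a fixed power of $t$ via the Sobolev embedding, since its range has bounded Laplace eigenvalues, and it is multiplied by $t^{-N}$.

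\textbf{Conclusion and main obstacle.} Given $\varepsilon'>0$, first choose $J$ so that the middle range contributes at most $\varepsilon' t^{r\delta(n/r,p)}$. Then, for $t$ large, the finitely many terms in the first range together with the negligible terms contribute at most $\varepsilon' t^{r\delta(n/r,p)}$. This gives $\|K^0_{t\lambda}\|_{L^2\to L^p}=o(t^{r\delta(n/r,p)})$. The main point to check carefully is the bookkeeping with Weyl representatives. One must verify that the weight $h^0_{t\lambda}(-\mu)$, which is a sum over $W$, is controlled on each cell by the single term centred near $t\lambda$, up to $O(t^{-N})$; this uses that $B^*$ is bounded away from the walls, so the points $tw\lambda$ are $\gtrsim t$ apart. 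One must also verify that the crude polynomial bounds for far-away or non-tempered cells hold uniformly. Both checks seem routine given the local Weyl law.
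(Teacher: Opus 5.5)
Your main line is correct and is essentially the paper's argument: split the spectrum into unit cells, dominate $K^0_{t\lambda}$ on each cell by the genuine projector at a lattice centre $t\lambda+j$, and sum using the rapid decay of $h^0_{t\lambda}$. Like you, the paper needs the hypothesis uniformly in $\mu\in B^*$, since it sums the $o(t^{r\delta(n/r,p)})$ bound over $O(t^r)$ centres.

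The differences are minor. You apply the triangle inequality to operator norms over disjoint cells, so you need no Cauchy--Schwarz and have no overlapping balls. You handle the far and non-tempered spectrum explicitly, where the paper simply restricts to $f$ spectrally supported in $\|\mu-t\lambda\|\le\epsilon_1 t$. You also invoke Theorem~\ref{thm: main 2} for $J<\|j\|\le\varepsilon t$. Under the uniform reading that middle range is unnecessary, because the hypothesis already covers every $\|j\|\le\varepsilon t$ and $\sum_j c_j\lesssim 1$. What the split buys is that you only need $\max_{\|j\|\le J}\|\bP_{t\lambda+j}\|_{L^2\to L^p}=o(t^{r\delta(n/r,p)})$ for each fixed $J$. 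The cost is using the upper bound, which is proved independently, so there is no circularity.

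Two statements need correcting.

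First, your parenthetical claim that pointwise smallness in $\mu$ would suffice is false. Your centres $\mu_j=\lambda+j/t$ move with $t$. Knowing $\|\bP_{t\mu}\|_{L^2\to L^p}=o(t^{r\delta(n/r,p)})$ as $t\to\infty$ for each fixed $\mu$ therefore says nothing about $\bP_{t\lambda+j}$ with $j\neq 0$. Those cells cannot be ignored: the cells with $0<\|j\|\le\frac12$ carry weight $c_j\asymp 1$. You need uniformity at least on $\{\lambda+j/t:\|j\|\le J\}$, e.g.\ on a neighbourhood of $\lambda$.

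Second, in higher rank the non-tempered spectrum does not have bounded Laplace eigenvalues. $\Re\mu$ is singular but can be arbitrarily large, and the eigenvalue is $\|\Re\mu\|^2-\|\Im\mu\|^2+\|\rho\|^2$, so your Sobolev argument gives no bound on $\Pi_{\rm nt}$. The fix is the same as your third range:
\begin{enumerate}
\item[(a)] $w\Re\mu$ lies on a wall, while $t\lambda$ is at distance $\gtrsim t$ from the walls.
\item[(b)] Hence the Paley--Wiener bound for $h^0_{t\lambda}$, with $\|\Im\mu\|\le\|\rho\|$, gives $|h^0_{t\lambda}(-\mu)|\lesssim_N e^{R\|\rho\|}(t+\|\Re\mu\|)^{-N}$ on the non-tempered spectrum.
\item[(c)] Decompose this spectrum dyadically in $\|\Re\mu\|$ and use polynomial $L^2\to L^p$ bounds for Laplace spectral windows. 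This yields $\|K^0_{t\lambda}\Pi_{\rm nt}\|_{L^2\to L^p}=O(t^{-N})$.
\end{enumerate}
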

\begin{proof}
    We first recall that we let $\cS\subset\fa^*_\BC/W$ be the set of spectral parameters in $L^2(X)$. If $\nu\in\cS$, then either $\nu\in\fa^*$ or $\Re(\nu)$ is singular and $\|\Im(\nu)\|\leq\|\rho\|$.
    Recall that from our construction \eqref{eq: condition for h0} and $\eqref{eq: defn for h0t}$, we have the following Paley--Wiener property for $h^0_{t\lambda}$. There exists $R>0$ such that for any $\nu\in\fa_\BC^*$
    \begin{align}\label{eq: PW property for h0t}
        h^0_{t\lambda}(\nu)\lesssim_{N} e^{R\|\Im \nu\|} \sum_{w\in W}(1+\|w\Re\nu+t\lambda\|)^{-N}.
    \end{align}
    Let $\epsilon_1>0$ be a sufficiently small constant, which depends only on $B^*$ and $\lambda$, such that the ball centered at $\lambda$ of radius $\epsilon_1$ is contained in $B^*$. By \eqref{eq: PW property for h0t}, for any $\nu\in\cS$ that $\|\Re(\nu)+tW\lambda\|\geq \epsilon_1t$, we have
    \[
    h_{t\lambda}^0(\nu)\lesssim_N t^{-N}.
    \]
    Let $\{ \psi_\mu\}$ be an orthonormal basis of Maass forms for $L^2(X)$.
    It suffices to consider those $f\in L^2(X)$ with the spectral expansion
    \begin{align}\label{eq: expansion of f in t ball}
        f=\sum_{\|\mu-t\lambda\|\leq \epsilon_1t} a_\mu \psi_\mu
    \end{align}
    and prove that for such $f$ we have
    \begin{align}\label{eq: improved K0 bound}
        \|K^0_{t\lambda} f\|_{L^p(X)} = o\Big(t^{r\delta(n/r,p)} \| f\|_{L^2(X)}\Big).
    \end{align}
    We can find a lattice in $\fa^*$ such that there are $O(t^r)$ many lattice points $\{t\mu_i\}$ whose radius 1 balls cover the ball $\{\mu:\|\mu-t\lambda\|\leq \epsilon_1t \}$. Moreover by our assumption on $\epsilon_1$, we have $\{\mu_i\}\subset B^*$. Therefore, for $f$ as in \eqref{eq: expansion of f in t ball}, we have
    \begin{align*}
        \|K_{t\lambda}^0 f\|_{L^p} = \Big\|\sum_{\|\mu-t\lambda\|\leq \epsilon_1t} h_{t\lambda}^0(-\mu) a_\mu \psi_\mu\Big\|_{L^p}
        \lesssim 
        \sum_{\mu_i}\|\bP_{t\mu_i}\|_{L^2\to L^p}
        \Big \| \sum_{\|\mu-t\mu_i\|\leq 1}h_{t\lambda}^0(-\mu) a_\mu \psi_\mu\Big\|_{L^2}.
    \end{align*}
    From\eqref{eq: improved bound for genuine projector}, we have $\|\bP_{t\mu_i}\|_{L^2\to L^p} = o(t^{r\delta(n/r,p)})$ for each $\mu_i$. Then by applying \eqref{eq: PW property for h0t} and the Cauchy--Schwarz inequality we obtain

    \begin{align*}
        &\sum_{\mu_i}
        \Big \| \sum_{\|\mu-t\mu_i\|\leq 1}h_{t\lambda}^0(-\mu) a_\mu \psi_\mu\Big\|_{L^2}
        \\
        \lesssim& \sum_{\mu_i}\left( \sum_{\mu\in\cS:\|\mu-t\mu_i\|\leq 1}|h_{t\lambda}^0(-\mu) a_\mu |^2\right)^{1/2}
        \\
        \lesssim&_N \sum_{\mu_i}(1+t\|\mu_i-\lambda \|)^{-N}\left( \sum_{\mu\in\cS:\|\mu-t\mu_i\|\leq 1}|a_\mu |^2\right)^{1/2}
        \\
        \leq& \left(\sum_{\mu_i}(1+t\|\mu_i-\lambda \|)^{-2N}\right)^{1/2}\left(\sum_{\mu_i}\sum_{\mu\in\cS:\|\mu-t\mu_i\|\leq 1}|a_\mu |^2\right)^{1/2}
        \\
        \lesssim& \|f\|_{L^2},
    \end{align*}
    so we obtain \eqref{eq: improved K0 bound}.
\end{proof}
Using the above proposition, it suffices to show that, for a fixed $\lambda$ in the interior of $B^*$ and any $t\geq1$, there exists $G_{t\lambda}\in L^2(X)$ such that
\begin{align}\label{eq: lower bound for K0 when p small}
    \| K_{t\lambda}^0 G_{t\lambda}\|_{L^p(X)}\gtrsim t^{r\delta(n/r,p)}\|G_{t\lambda}\|_{L^2(X)},\qquad 2\leq p\leq p_c.
\end{align}
We construct $G_{t\lambda}$ to be an approximate Gaussian beam, which concentrates near $O(t^{-1/2})$ neighborhoods of a maximal flat. Let $\chi\in C_c^\infty(G)$ be a bump function. Without loss of generality, we assume that $\Gamma$ is torsion-free so we can assume that $\gamma\supp\chi\cap\supp\chi\neq \emptyset$ for $\gamma\in\Gamma$ implies that $\gamma=e$. With the Iwasawa decomposition $G=NAK$, given $x\in G$, we let $N(x)\in \fn$ and $A(x)\in\fa$ be such that $x\in e^{N(x)} e^{A(x)}K$. Let $b_0\in C_c^\infty(\BR)$ be as before, an even bump function that is 1 on $[-1,1]$ and vanishes outside $[-2,2]$. We let
\[
G_{t\lambda}(x)=t^{\frac{n-r}{4}}\chi(x)b_0\Big(t^{1/2}\|N(x)\|\Big)b_0\Big(\|A(x)\|\Big) e^{\langle\rho+it\lambda,A(x)\rangle},\qquad x\in G.
\]
By our assumption on $\supp \chi$, we shall think of $G_{t\lambda}\in C^\infty(X)$ and it is immediate that 
\begin{align}\label{eq: Lp norm of gaussian beam}
    \|G_{t\lambda}\|_{L^p(X)} \asymp t^{\frac{n-r}{4}-\frac{n-r}{2p}},\qquad p\geq1.
\end{align}

\begin{lemma}
    Let $K_{t\lambda}^0$ and $G_{t\lambda}$ be as above. Then we have
    \[
    |\langle K_{t\lambda}^0 G_{t\lambda},G_{t\lambda}\rangle|=\left| \int_X K_{t\lambda}^0G_{t\lambda}(x) \overline{G_{t\lambda}(x)} dx \right|\gtrsim 1.
    \]
\end{lemma}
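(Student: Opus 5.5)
We will compute $\langle K^0_{t\lambda}G_{t\lambda},G_{t\lambda}\rangle$ by unfolding. Since $\supp\chi$ meets none of its nontrivial $\Gamma$-translates, and $k^0_{t\lambda}$ has compact support uniformly in $t$ (its Abel transform is $\sum_w g^0(H)e^{-it\langle w\lambda,H\rangle}$, supported in $\supp g^0$), we will shrink $\supp\chi$ if necessary so that only $\gamma=e$ contributes. This gives
\[
\langle K^0_{t\lambda}G_{t\lambda},G_{t\lambda}\rangle=\int_G\int_G k^0_{t\lambda}(x^{-1}y)G_{t\lambda}(y)\overline{G_{t\lambda}(x)}\,dy\,dx .
\]
The plan is to run the computation on the spectral side. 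Write $k^0_{t\lambda}$ via Harish-Chandra inversion and expand $\varphi_\nu(x^{-1}y)=\int_K e^{\langle i\nu+\rho,A(kx^{-1}y)\rangle}dk$, or equivalently use the Helgason Fourier transform $\tilde G(\nu,b)=\int_G G(y)e^{\langle -i\nu+\rho,A(k_b y)\rangle}dy$. This yields
\[
\langle K^0_{t\lambda}G,G\rangle=\int_{\fa^*_+}\int_{K/M}h^0_{t\lambda}(\nu)\,|\tilde G_{t\lambda}(\nu,b)|^2\,db\,d\mu(\nu),
\]
up to sign and normalization conventions. This is a nonnegative quantity, because $h^0_{t\lambda}$ is real and $\ge 1$ near $-tW\lambda$ by \eqref{eq: condition for h0}. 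Positivity removes any possible cancellation, so it suffices to bound the integrand from below on a region of $(\nu,b)$ near the relevant $\nu$ of measure large enough. Where $h^0_{t\lambda}$ is negative, we will control the contribution by the rapid decay \eqref{eq: PW property for h0t} together with Plancherel.

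The next step is to compute $\tilde G_{t\lambda}(\nu,b)$ for $b$ in an $O(t^{-1/2})$ neighborhood of the identity coset and $\nu$ within $O(1)$ of $t\lambda$ (with the appropriate sign convention). In $y=e^{N}e^{A}k$ coordinates we have $dy=e^{-2\rho(A)}dN\,dA\,dk$. At $b=e$ we have $A(e^Ne^Ak)=A$, so the integrand is
\[
t^{\frac{n-r}{4}}\chi\, b_0(t^{1/2}\|N\|)\,b_0(\|A\|)\,e^{i\langle t\lambda-\nu,A\rangle}.
\]
Integrating over $N$ gives volume $t^{-(n-r)/2}$. The $A$-integral is $\gtrsim 1$ when $\|\nu-t\lambda\|\ll 1$, since the phase is then nearly constant and the amplitude is nonnegative. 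Hence $|\tilde G|\gtrsim t^{-(n-r)/4}$.

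For $k_b$ with $\mathrm{dist}(b,e)\le c\,t^{-1/2}$, the phase $\langle\nu,A(k_b e^Ne^A)\rangle$ differs from $\langle\nu,A\rangle$ by terms quadratic in the perturbation. This is because the identity is a critical point in $b$ of $A(k_b e^{A})$ for each fixed $A$, the same critical structure as in Proposition \ref{prop:dkvCosets}. Because $\|N\|\lesssim t^{-1/2}$, the additional error is $O(t\cdot t^{-1/2}\cdot t^{-1/2})=O(1)$, and we will make it small by taking $c$ small. The lower bound therefore persists on that neighborhood.

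This $b$-neighborhood has measure $\asymp t^{-(n-r)/2}$, since $\dim K/M=n-r$. The $\nu$-ball has volume $\asymp 1$, and the Plancherel density there is $\asymp t^{n-r}$ by the Gindikin--Karpelevic estimate used in Lemma \ref{lem: bd for spectral integral}. Putting these together, the contribution is
\[
\gtrsim t^{n-r}\cdot t^{-(n-r)/2}\cdot t^{-(n-r)/2}=1 .
\]

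The main obstacle is making the phase control rigorous uniformly in $N$, $A$ and $b$. This requires a second-order expansion of the Iwasawa projection $A(k e^N e^A)$ near $k=e$, $N=0$, with constants uniform for $A$ in a compact set. It also requires handling singular $A$, where the Hessian in $b$ degenerates. Degeneracy is harmless here, because we only need the phase error to be small, not stationary-phase asymptotics. We also need to check the sign conventions so that the region where $h^0_{t\lambda}\ge1$ matches the region where $\tilde G$ is large. A fallback is to treat only the $b_0$-pieces directly without positivity and estimate the main term by the same phase argument.
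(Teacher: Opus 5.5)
Your argument follows the paper's proof. You unfold, then use Helgason's formula $\varphi_s(x^{-1}y)=\int_K e^{\langle\rho-is,A(kx)\rangle}e^{\langle\rho+is,A(ky)\rangle}\,dk$ to write the pairing as $\frac{1}{|W|}\int h^0_{t\lambda}(s)\int_K|I(t\lambda;s,k)|^2\,dk\,d\mu(s)$. Next you show $|I|\gtrsim t^{-(n-r)/4}$ for $\|s+t\lambda\|\le\epsilon$ and $k=m\exp Y$ with $m\in M$, $\|Y\|\le\epsilon t^{-1/2}$. Since $A(me^Ne^H)=H$ and $M$ is critical, the phase error is $t\,O(\|Y\|^2+\|Y\|\|N\|)=O(\epsilon)$. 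Finally you multiply $t^{n-r}\cdot t^{-(n-r)/2}\cdot t^{-(n-r)/2}$. That part is fine.

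The step that fails as written is positivity. That $h^0_{t\lambda}$ is real and $\ge 1$ near $-tW\lambda$ does not make the integral nonnegative, and your repair does not work. \eqref{eq: PW property for h0t} makes $h^0_{t\lambda}$ small only at distance $\gg 1$ from $-tW\lambda$. But $G_{t\lambda}$ is cut off at unit scale in the $A$-variable, so its spectral mass is spread over an $O(1)$-neighbourhood of $-t\lambda$. Plancherel bounds that mass only by $\|G_{t\lambda}\|_2^2\asymp 1$. That is the same order as your main term, which moreover carries a small $\epsilon$-dependent constant. So $O(1)$-size negative values of $h^0$ at distance between $1$ and, say, $10$ from the origin can cancel it, and \eqref{eq: condition for h0} does not exclude them. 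You must choose $h^0\ge 0$ on $\fa^*$, e.g.\ $h^0=u^2$ with $u$ real, $W$-invariant, Paley--Wiener, $u(0)=1$ and $u\ge 1$ on the unit ball. This costs nothing, because the reduction proposition only uses \eqref{eq: PW property for h0t}. The paper's proof tacitly needs the same thing when it discards the complement of the good region.

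A second, smaller slip: shrinking $\supp\chi$ does not remove the terms $\gamma\ne e$ in the unfolding. For $x,y\in\supp\chi$ near the identity one has $x^{-1}\gamma y\approx\gamma$. Meanwhile $\supp k^0_{t\lambda}$ is a fixed compact set, of size the Paley--Wiener radius of $h^0$, and it may contain nontrivial elements of $\Gamma$. The hypothesis that $\supp\chi$ is disjoint from its translates says nothing about this. What you need is that $\supp g^0$ be small compared with the injectivity radius of $X$; after that, $\chi$ can be taken small. This is compatible with \eqref{eq: condition for h0} and with $h^0\ge0$. For instance, take $u=\hat\phi\,(1+\delta^2\|\nu\|^2)$ with $\delta\le 1$ and $\phi\ge 0$ radial of mass one supported in a $\delta$-ball. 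Then $u\ge(1-\tfrac{\delta^2}{2}\|\nu\|^2)(1+\delta^2\|\nu\|^2)\ge 1$ for $\|\nu\|\le 1$, and $u^2$ has Paley--Wiener radius $2\delta$.
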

\begin{proof}
    By our assumption on $\Gamma$ and $\chi$, it is clear that
    \begin{align*}
        \int_X K_{t\lambda}^0G_{t\lambda}(x) \overline{G_{t\lambda}(x)} dx=\int_G\int_G \overline{G_{t\lambda}(x)} G_{t\lambda}(y) k^0_{t\lambda}(x^{-1}y) dx dy.
    \end{align*}
    Using the inverse Harish-Chandra transform for $k^0_{t\lambda}$ and applying \cite[Chapter IV, Lemma 4.4]{helgason1984groups} to the spherical function, we have
    \begin{align*}
        &\langle K_{t\lambda}^0 G_{t\lambda},G_{t\lambda}\rangle=\frac{1}{|W|}\int_{\fa^*} h^0_{t\lambda}(s)\left(\int_G\int_G \overline{G_{t\lambda}(x)} G_{t\lambda}(y) \varphi_s(x^{-1}y)dxdy \right) d\mu(s)\\
        =&\frac{1}{|W|}\int_{\fa^*}\int_K h^0_{t\lambda}(s)\left(\int_G\int_G \overline{G_{t\lambda}(x)} G_{t\lambda}(y) e^{\langle \rho-is,A(kx)\rangle} e^{\langle \rho+is,A(ky)\rangle}dxdy \right) dkd\mu(s)\\
        =&\frac{1}{|W|}\int_{\fa^*}\int_K h^0_{t\lambda}(s)\left|I(t\lambda;s,k)\right|^2 dkd\mu(s).
    \end{align*}
    Here $d\mu$ is the Plancherel measure, and $I(t\lambda;s,k)$ is the integral
    \[
    \int_G \overline{G_{t\lambda}(x)} e^{\langle \rho-is,A(kx)\rangle} dx = t^{\frac{n-r}{4}}\int_G\chi(x)b_0(t^{1/2}\|N(x)\|)b_0(\|A(x)\|) e^{\langle\rho-it\lambda,A(x)\rangle}e^{\langle \rho-is,A(kx)\rangle}  dx.
    \]
    By considering the Iwasawa decomposition and combining the amplitudes to define a new cutoff function $\widetilde\chi(\cdot,\cdot;k)\in C_c^\infty(\fn\times\fa)$ with parameter $k\in K$, we can write $I(t\lambda;s,k)$ as the integral
    \begin{align}\label{eq: no osillation}
         t^{\frac{n-r}{4}}\int_{\fn\times\fa}\widetilde\chi(N,H;k) b_0(t^{1/2}\|N\|)b_0(\|H\|) e^{-i\langle s,A(ke^N e^H)\rangle-i\langle t\lambda,H\rangle} dN dH,
    \end{align}
    with $dN$ and $dH$ Lebesgue measures on $\fn$ and $\fa$. We choose a constant $\epsilon>0$. Suppose that the parameters $s,k$ in $I(t\lambda;s,k)$ satisfy that $\|s+t\lambda\|<\epsilon$ and $k$ is in an $\epsilon t^{-1/2}$ neighborhood of $M$ in $K$. Using Taylor's theorem and the fact that $M$ is contained in the critical set for the function $k\mapsto \langle\lambda, A(k e^H)\rangle$, we get
    \[
    \langle \lambda,A(ke^N e^H)\rangle=\langle\lambda, H\rangle+O(\epsilon t^{-1}).
    \]
    Therefore, the phase function in \eqref{eq: no osillation} becomes
    \[
    \langle s,A(ke^N e^H)\rangle+\langle t\lambda,H\rangle= \langle t\lambda, H-A(ke^N e^H)\rangle+O(\epsilon)=O(\epsilon),
    \]
    so the integral \eqref{eq: no osillation} is $\gtrsim t^{-(n-r)/4}$ by choosing $\epsilon$ sufficiently small. Using the fact that $$\int_{\|s+t\lambda\|\leq \epsilon} d\mu(s)\gtrsim_\epsilon t^{n-r}, $$
    and combining with the above, we obtain
    \begin{align*}
        \langle K_{t\lambda}^0 G_{t\lambda},G_{t\lambda}\rangle&=\frac{1}{|W|}\int_{\fa^*}\int_K h^0_{t\lambda}(s)\left|I(t\lambda;s,k)\right|^2 dkd\mu(s)\\
        &\geq \frac{1}{|W|} \int_{\|s+t\lambda\|\leq \epsilon}\int_{\mathrm{dist}(k,M)\leq \epsilon t^{-1/2}} h^0_{t\lambda}(s)\left|I(t\lambda;s,k)\right|^2 dkd\mu(s)\\
        &\gtrsim_\epsilon t^{n-r} t^{-(n-r)/2} ( t^{-(n-r)/4})^2 = 1.
    \end{align*}\end{proof}

Given $p\geq2$, let $p'$ be the dual exponent that is $\frac{1}{p}+\frac{1}{p'}=1$. Combining the above lemma with H\"older's inequality, we obtain
\begin{align*}
    \|K_{t\lambda}^0 G_{t\lambda}\|_{L^p(X)}\| G_{t\lambda}\|_{L^{p'}(X)}\geq  |\langle K_{t\lambda}^0 G_{t\lambda},G_{t\lambda}\rangle|\gtrsim 1.
\end{align*}
Using \eqref{eq: Lp norm of gaussian beam}, we obtain
\begin{align*}
     \|K_{t\lambda}^0 G_{t\lambda}\|_{L^p(X)}&\gtrsim  \| G_{t\lambda}\|_{L^{p'}(X)}^{-1} \gtrsim t^{\frac{n-r}{4}-\frac{n-r}{2p}}\asymp t^{\frac{n-r}{4}-\frac{n-r}{2p}}\| G_{t\lambda}\|_{L^{2}(X)},
\end{align*}
so we conclude \eqref{eq: lower bound for K0 when p small}.

\subsection{Generalization to semisimple groups}\label{sec: ss generalization}

Finally, we sketch how to modify our proof to get the correct bound in semisimple cases. In this section we follow the same notation as the previous sections, but we assume that $G$ is only semisimple and $S=S_1\times S_2\times\cdots\times S_m$. Here $S_j = G_j/K_j$, $1\leq j\leq m$, are irreducible factors, and each $G_j$ is simple with a maximal compact subgroup $K_j$. Let $n_j = \dim S_j$ and $r_j=\rank S_j$. Let $\Gamma\subset G$ be a cocompact lattice; we consider the locally symmetric space $X=\Gamma\backslash S$.

For each $j$, we let $k_j(\cdot;z)$ be the test function with complex parameter $z\in\BC$ constructed in Section \ref{sec: complex family of projectors}. Let $k(\cdot;z)=\prod_{j} k_j(\cdot;\frac{r_j}{n_j-r_j}z)$ and let $T_z$ be the operator associated to the automorphic kernel
\[
    T_z(x,y)=\sum_{\gamma\in\Gamma} k(x^{-1}\gamma y;z).
\]
It may be seen that the same proof for Proposition \ref{prop 6.4} gives
\[
\|T_z\|_{L^2\to L^2}\lesssim 1,\qquad\text{when }\Re z=-1,
\]
and
\[
\|T_z\|_{L^1\to L^\infty}\lesssim \prod_{j=1}^m \max\left\{  t^{(n_j-r_j)(1-\frac{r_j}{n_j-r_j}\Re z)}, t^{\frac{n_j-r_j}{2}}\right\}
\]
for any $\Re z\geq0$. Fix some $p\geq2$ and let $p'$ be its dual exponent. Then Stein's complex interpolation for the holomorphic family of operators $\{T_z\}$ in the vertical strip $-1\leq\Re z\leq \frac{2}{p-2}$ gives
\begin{align*}
    \|T_0\|_{L^{p'}\to L^p} &\lesssim \left(\prod_{j=1}^m \max\left\{  t^{(n_j-r_j)(1-\frac{r_j}{n_j-r_j}\frac{2}{p-2})}, t^{\frac{n_j-r_j}{2}}\right\}\right)^{\frac{p-2}{p}}\\
    &=\prod_{j=1}^m t^{\max\left\{ n_j-r_j-\frac{2n_j}{p},\frac{n_j-r_j}{2}-\frac{n_j-r_j}{p}\right\}}\\
    &=\prod_{j=1}^m t^{2r_j\delta(n_j/r_j,p)}.
\end{align*}
The rest of the argument is the same as the irreducible case.

\section{Fourier restriction}

In this section, we prove Theorem \ref{thm: Fourier restriction} or equivalently Theorem \ref{thm: Fourier extension}. 
We define, for $\Lambda\in\fa^+$ and $X\in\fp$, a generalized Bessel function to be
\[
J_{\Lambda}(X) = \int_K e^{i\langle X,\Ad k(\Lambda)\rangle} dk.
\]
Given $f\in C_c^\infty(\fp)$ we define its convolution with $J_\Lambda$ to be
    \[f*J_{\Lambda}(X)=\int_\fp f(X-Y)J_{\Lambda}(Y) dY.\]

By a standard $TT^*$ argument, it is equivalent to prove the following theorem.
\begin{theorem}\label{thm: convolution}
    Let $\Lambda\in\fa^+$ be a regular point. Let $p_c=\frac{2(n+r)}{n-r}$ and $ p_c' = \frac{2(n+r)}{n+3r}$. Then for any $f\in C_c^\infty(\fp)$ we have
    \[
    \|f*J_{\Lambda}\|_{p_c} \lesssim \|f\|_{p_c'}.
    \]
\end{theorem}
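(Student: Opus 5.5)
We follow the scheme of Section 5 and interpolate along a holomorphic family of convolution kernels, now on the Euclidean space $\fp$ instead of on $X$. Polar coordinates $Y=\Ad k(H)$ with $H\in\fa$ give $dY = c\,|\omega(H)|\,dk\,dH$, and $J_\Lambda$ is $\Ad K$-invariant. We will decompose $J_\Lambda$ dyadically according to $|\omega(H)|$ of the radial coordinate. Let $b_0,b_1$ be as in Section 4. For $z\in\BC$ and $Y=\Ad k(H)$ set
\[
J_\Lambda(Y;z) = (2^{z}-2^{-\frac{r}{n-r}})\Big(b_0(\omega(H)) + \sum_{m\ge1}2^{mz}b_1(2^{-m}\omega(H))\Big)J_\Lambda(Y),
\]
so that $J_\Lambda(\cdot;0)=(1-2^{-\frac{r}{n-r}})J_\Lambda$. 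The sum is infinite, but it is locally finite with $O(1)$ nonzero terms at each $H$. The operators $T_z f=f*J_\Lambda(\cdot;z)$ form an analytic family of admissible growth. To make this rigorous we first truncate to $m\le M$ and take bounds uniform in $M$. Since Stein interpolation between the lines $\Re z=-\frac{r}{n-r}$ and $\Re z=\frac12$ gives exactly $L^{p_c'}\to L^{p_c}$ at $z=0$, it suffices to prove $\|T_z\|_{L^1\to L^\infty}\lesssim 1$ for $\Re z=\frac12$ and $\|T_z\|_{L^2\to L^2}\lesssim 1$ for $\Re z=-\frac{r}{n-r}$.

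\emph{$L^1\to L^\infty$.} By Young's inequality we need $\sup_Y|J_\Lambda(Y;z)|\lesssim1$. For $Y=\Ad k(H)$ we have $J_\Lambda(Y)=E(1;H,\Lambda)$ with the roles of $\fa$ and $\fa^*$ swapped via the Killing form. Theorem \ref{thm: Weyl Harish-Chandra symbol 2}, applied with $\Lambda$ in a compact set bounded away from the singular set, gives $|J_\Lambda(Y)|\lesssim\prod_{\alpha}(1+|\alpha(H)|)^{-m_\alpha/2}$. The dyadic weight has size $\lesssim (1+|\omega(H)|)^{\Re z}\le\prod_\alpha(1+|\alpha(H)|)^{m_\alpha\Re z}$. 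At $\Re z=\frac12$ the product of the two is bounded, uniformly in $\Lambda\in B$.

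\emph{$L^2\to L^2$.} By Plancherel, $\|T_z\|_{L^2\to L^2}=\sup_\xi|\widehat{J_\Lambda(\cdot;z)}(\xi)|$. Since $J_\Lambda(\cdot;z)$ is $\Ad K$-invariant, we may take $\xi=\Ad k'(\nu)$ with $\nu\in\fa$. Integrating in polar coordinates and using $\int_K J_\Lambda(\Ad k H)\,e^{-i\langle \Ad k H,\xi\rangle}dk$ together with the $K$-average defining $J_\Lambda$, the Fourier transform equals a $K$-average of $\widehat{\,\cdot\,}$ computed on $\fa$. Concretely, writing $d\mu_{\cO_\Lambda}$ in place of $J_\Lambda=\widehat{d\mu_{\cO_\Lambda}}$, we get
\[
\widehat{J_\Lambda(\cdot;z)}(\xi)=\int_{\cO_\Lambda}\widehat{w_z}(\xi-\eta)\,d\mu_{\cO_\Lambda}(\eta),
\]
where $w_z(Y)$ is the radial dyadic weight. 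This makes the problem essentially the Euclidean $\fa$-problem of Proposition \ref{prop: symbol estimates}, with $t=1$ but an unbounded radial variable. Rather than using this formulation, I would compute directly. I would write the Fourier transform as $\int_\fa w_z(H)|\omega(H)|\int_K\int_K e^{i\langle \Ad k H,\Ad k_1\Lambda-\xi\rangle}dk\,dk_1\,dH$. I would then rescale each dyadic piece, $H\mapsto 2^{m/(n-r)}H$, which turns $b_1(2^{-m}\omega)$ into $b_1(\omega)$. Using $\omega$-homogeneity, the factor $|\omega|\,dH$ becomes $2^{m\frac{n}{n-r}}$, and the weight $2^{m(-\frac{r}{n-r}+iy)}$ combines with it to give a factor $2^{m}\cdot 2^{imy}$ times an oscillatory integral at frequency scale $2^{m/(n-r)}$. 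This integral is the Fourier transform of the compact hypersurface-like measure $b_1(\omega)|\omega|dH\,dk$ evaluated at $2^{m/(n-r)}(\cO_\Lambda-\xi)$, integrated against $d\mu_{\cO_\Lambda}$. The main difficulty, and the one I expect to be the obstacle, is to show that these pieces are summable and that the $\log$-loss at the boundary scale is cancelled by the partial summation in $m$ exactly as in Proposition \ref{prop: symbol estimates}. This uses the analogue of Lemma \ref{lem: bd for hm via IBP} (Fourier decay $2^{-\delta m}$ away from the resonant region) for large $m$ relative to $\mathrm{dist}(\xi,\cO_\Lambda)^{-1}$, and the mean-value difference bound together with Lemma \ref{lem: finite log integral} for small $m$. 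My first attempt will be to reduce, via Duistermaat's map as in the proof of Theorem \ref{thm: Weyl Harish-Chandra symbol 2}, to a one-parameter family in which the radial $\fa$-integral factors off. Proposition \ref{prop: symbol estimates} could then be quoted after the substitution $t^{n-r}\leftrightarrow$ the dyadic parameter, with $\chi$ replaced by a bounded smooth cutoff in the $K$-variables.
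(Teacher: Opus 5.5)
Your interpolation arithmetic and your $L^1\to L^\infty$ step are fine. However, the $L^2\to L^2$ bound on $\Re z=-\frac{r}{n-r}$, which is the heart of the endpoint, is not proved, and the reductions you propose for it do not go through.

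\textbf{Why the $L^2$ step fails in your setup.} You multiply $J_\Lambda$ pointwise by the radial weight $w_z$, so the multiplier is $\widehat{w_z}*d\mu_{\cO_\Lambda}$, where $\widehat{w_z}$ is the $n$-dimensional Fourier transform on $\fp$ of a $K$-invariant function.
\begin{enumerate}
\item This is not the Euclidean $\fa$-problem of Proposition \ref{prop: symbol estimates}. For $\nu\in\fa$, $\widehat{F}(\nu)$ is the $\fa$-Fourier transform of the projection $H\mapsto\int_{\fa^\perp}F(H+Z)\,dZ$, not of $F|_\fa$. The projection of a dyadic $\omega$-weight is not a dyadic $\omega$-weight.
\item The measure $b_1(\omega(H))|\omega(H)|\,dH\,dk$ is not "compact hypersurface-like". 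It is just $b_1(\omega(Y))\,dY$ on $\fp$, the density of a solid region of finite volume that is unbounded in rank $\geq 2$. The only decay you have for its Fourier transform is the weak bound $\|\zeta\|^{-\delta}$ obtainable as in Lemma \ref{lem: bd for hm via IBP}.
\item As you computed, each rescaled dyadic piece carries a factor $2^{m}$. You would therefore need $\int_{\cO_\Lambda}\widehat{b_1\circ\omega}\bigl(2^{\frac{m}{n-r}}(\xi-\eta)\bigr)\,d\mu_{\cO_\Lambda}(\eta)=O(2^{-m})$ uniformly in $\xi$, with errors that sum or partially sum. Nothing in your sketch gives this, and $\|\zeta\|^{-\delta}$ decay is far too weak.
\item Duistermaat's map does not rescue this. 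It converts the adjoint phase into an Iwasawa phase but does not split off the radial $\fa$-integral. Theorem \ref{thm: Weyl Harish-Chandra symbol 2} needs the spectral parameter in a fixed compact set away from the walls. That fails here: $H$ ranges over $\{|\omega(H)|\sim 1\}$, which runs to infinity along the walls, and $\zeta=\xi-\Ad k(\Lambda)$ ranges over all of $\fp$.
\end{enumerate}

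\textbf{The missing idea.} Put the analytic weight on the frequency side, in the $\fa$-Fourier picture. This is the flat analogue of applying the weight to the Abel transform $g$ in Section 5.
\begin{enumerate}
\item The paper uses the $K$-invariant multiplier $\phi\,\mu_t(\cdot;z)$, where $\mu_t(H';z)$ is $(2^z-2^{-\frac{r}{n-r}})\sum_{w\in W}\int_\fa b_0(t^{-1}\|H\|)\bigl(b_0(\omega(H))+\sum_{m}2^{mz}b_1(2^{-m}\omega(H))\bigr)e^{i\langle H,H'-w\Lambda\rangle}\,dH$.
\item With this choice the $L^2$ bound is $\lesssim\sup|\mu_t|$. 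After $H\mapsto tH$ this is $t^r$ times the quantity in Proposition \ref{prop: symbol estimates}, with $\chi=b_0(\|\cdot\|)$ and frequency $tH'$, hence $\lesssim 1$.
\item The cost moves to the $L^1\to L^\infty$ step. The kernel becomes $\int_\fa(\cdots)\int_{\fa^+}\omega(H')\phi(H')e^{i\langle H,H'\rangle}J_{H'}(X)\,dH'\,dH$. One needs the $\nu$-derivative bounds of Theorem \ref{thm: Weyl Harish-Chandra symbol 2} (which is why they are stated for $\nu$-derivatives) to integrate by parts in $H'$ and localize $H$ near $-wX$.
\item One then moves the weight onto $X$ via $1+|\alpha(H)|\lesssim(1+|\alpha(wX)|)(1+\|H+wX\|)$.
\item The truncation $b_0(t^{-1}\|H\|)$ keeps every sum finite. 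At $z=0$ the multiplier is an approximate identity at $W\Lambda$, and Lemma \ref{lem: dominated convergence} together with Fatou's lemma handles $t\to\infty$.
\end{enumerate}

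Your version makes the $L^1\to L^\infty$ side trivial but leaves the genuinely hard side without a proof.
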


To prove this, first we fix a smooth function $\phi$ on $\fa^+$, with compact support, contained in a small neighborhood of the point $\Lambda$, and which takes the value 1 at $\Lambda$. We extend $\phi$ to a $K$-invariant smooth function on $\fp$, denoted again by $\phi$.  Since $\phi$ is identically 1 on the support of $d\mu_{\cO_\Lambda}$ we have ${\phi}^\vee * J_{\Lambda} = J_{\Lambda}$.

Similar to the previous proof, we use Stein's interpolation and now introduce a holomorphic family of operators. 
Let $b_0\in C_c^\infty(\BR)$ be an even cutoff function that is equal to 1 on $[-1,1]$ and zero outside $[-2,2]$, and let $b_1(x) = b_0(x)-b_0(2x)$. For any $H\in\fa$, we have
\[
1 = b_0\big(\omega(H)\big) + \sum_{m=1}^{\infty}  b_1\big(2^{-m}\omega(H)\big).
\]
For a complex variable $z\in\BC$ and a parameter $t\geq1$, we define, for $H'\in\fa$ and $k\in K$,
\begin{multline*}
    \mu_t(\Ad k(H');z) =(2^{z}-2^{-\frac{r}{n-r}})\\
    \sum_{w\in W}\int_\fa b_0(t^{-1}\|H\|)\left(b_0\big(\omega(H)\big) + \sum_{m=1}^{\infty} 2^{mz}  b_1\big(2^{-m}\omega(H)\big)\right)e^{i\langle H,H'-w\Lambda\rangle} dH.
\end{multline*}
Here the Lebesgue measure $dH$ on $\fa$ is normalized so that the polar coordinates integration formula
\[ \int_\fp f(X) dX = \int_K \int_{\fa^+}f(\Ad k (H))\omega(H)dH dk\]
holds for any $f\in C_c^\infty(\fp)$.
For each fixed $t\geq1$, we define a holomorphic family of operators $\{T_{z,t}\}_{z\in\BC}$ by the convolution $$T_{z,t}: f\mapsto f*(\phi  \mu_t(\cdot;z))^\vee $$
where $f\in C_c^\infty(\fp)$.

\begin{lemma}\label{lem: dominated convergence}
    When $z=0$, for any $f\in C_c^\infty(\fp)$ and any $X\in\fp$, we have
    \[
    \lim_{t\to\infty} T_{0,t}f(X) =C f*J_\Lambda(X),
    \]
    where $C$ is a nonzero constant.
\end{lemma}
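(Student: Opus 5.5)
Set $z=0$ in the definition of $\mu_t$. Then the bracket $b_0(\omega(H))+\sum_{m\ge1}b_1(2^{-m}\omega(H))$ telescopes to $1$, and the prefactor is $c_0:=1-2^{-\frac{r}{n-r}}\neq0$. Hence
\[
\mu_t(\Ad k(H');0)=c_0\sum_{w\in W}\int_\fa b_0(t^{-1}\|H\|)e^{i\langle H,H'-w\Lambda\rangle}dH
= c_0 t^r\sum_{w\in W}\widehat{b_0}\bigl(t(w\Lambda-H')\bigr),
\]
where $\widehat{b_0}$ denotes the Fourier transform on $\fa$ of $H\mapsto b_0(\|H\|)$, a Schwartz function with total integral $(2\pi)^r b_0(0)$ up to the normalization of measures. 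So $\mu_t(\cdot;0)$ is a $K$-invariant approximate identity in the $\fa$-variable, concentrating at $W\Lambda$. Since $\phi$ is supported near $\Lambda$ in $\fa^+$ and extended $K$-invariantly, on $\fa^+$ only the $w=e$ term matters up to Schwartz tails.

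The plan is to compute $T_{0,t}f(X)=\int_\fp f(X-Y)(\phi\mu_t(\cdot;0))^\vee(Y)\,dY$ on the Fourier side. Write $f*(\phi\mu_t)^\vee(X)=\int_\fp \widehat{f}(\xi)\phi(\xi)\mu_t(\xi;0)e^{i\langle X,\xi\rangle}d\xi$, up to normalization constants. Then use the polar coordinates formula $\int_\fp F\,d\xi=\int_K\int_{\fa^+}F(\Ad k(H'))\omega(H')\,dH'\,dk$. This gives
\[
T_{0,t}f(X)=c_0\int_K\int_{\fa^+} F_X(\Ad k(H'))\,\phi(H')\omega(H')\,t^r\widehat{b_0}\bigl(t(\Lambda-H')\bigr)\,dH'\,dk+o(1),
\]
where $F_X(\xi)=\widehat f(\xi)e^{i\langle X,\xi\rangle}$ is Schwartz, and the $w\neq e$ terms are $O(t^{-N})$ since $\phi$ is supported away from $w\Lambda$ for $w\ne e$, $\Lambda$ being regular. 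The function $H'\mapsto\int_K F_X(\Ad k(H'))dk\,\phi(H')\omega(H')$ is continuous and compactly supported near $\Lambda$. Since $t^r\widehat{b_0}(t(\Lambda-\cdot))$ is an approximate identity, dominated convergence (after the substitution $H'=\Lambda+u/t$) gives the limit
\[
c_0(2\pi)^r b_0(0)\,\omega(\Lambda)\int_K F_X(\Ad k(\Lambda))\,dk.
\]
This equals $C\int_K\widehat f(\Ad k\Lambda)e^{i\langle X,\Ad k\Lambda\rangle}dk$. Fourier inversion gives the latter as $f*J_\Lambda(X)$ up to a constant, because $\widehat f(\Ad k\Lambda)e^{i\langle X,\Ad k\Lambda\rangle}=\int f(X-Y)e^{i\langle Y,\Ad k\Lambda\rangle}dY$. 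Then $C=c_0(2\pi)^r\omega(\Lambda)\cdot(\text{normalization})\neq0$, because $\Lambda$ is regular and $\phi(\Lambda)=1$.

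The main technical point is justifying the exchange of the Fourier representation with the convolution. Here $(\phi\mu_t)^\vee$ is the inverse Fourier transform of a compactly supported smooth function (for fixed $t$), so the interchange is legitimate by Fubini, since $f\in C_c^\infty$. The remaining care is in the dominated convergence: the domination comes from the Schwartz decay of $\widehat{b_0}$ together with boundedness of $F_X$, $\phi$ and $\omega$ on the compact support of $\phi$. The result is pointwise for each $X$, which is all the lemma asserts.
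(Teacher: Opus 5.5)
Your proof is correct and is essentially the paper's argument: polar coordinates on $\fp$, the observation that $\mu_t(H';0)=c_0t^r\sum_{w}\widehat{b_0}(t(w\Lambda-H'))$ is an approximate identity concentrating at $W\Lambda$ (with only $w=e$ surviving on $\fa^+$), and dominated convergence. The only difference is the order of limits: you pass to the Fourier side and pair directly against $\widehat f(\Ad k H')e^{i\langle X,\Ad k H'\rangle}$, whereas the paper first takes the pointwise limit of the kernel, $(\phi\mu_t(\cdot;0))^\vee\to C\omega(\Lambda)\phi(\Lambda)J_\Lambda$, and then applies dominated convergence in $Y$, which tacitly uses the uniform bound $|(\phi\mu_t(\cdot;0))^\vee|\le\|\phi\mu_t(\cdot;0)\|_{L^1(\fp)}\lesssim 1$; your ordering makes the domination explicit.
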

\begin{proof}
    It is clear that for $H'\in\fa$
    \[
    \mu_t(H';0)=(1-2^{-\frac{r}{n-r}})
    \sum_{w\in W}\int_\fa b_0(t^{-1}\|H\|)e^{i\langle H,H'-w\Lambda\rangle} dH,
    \]
    so using the $K$-invariance for $\phi$ and $\mu_t$ and the polar coordinates, we have for $X\in \fp$
    \begin{align*}
        &(\phi  \mu_t(\cdot;0))^\vee( X) = \int_\fp \phi(Y) \mu_t(Y;0) e^{i\langle X,Y\rangle} dY\\
        &=\int_{\fa^+}\int_K \phi(H')\mu_t(H';0)e^{i\langle X,\Ad k(H')\rangle}\omega(H') dk dH'\\
        &=(1-2^{-\frac{r}{n-r}})
    \sum_{w\in W}\int_{\fa^+}\int_\fa \omega(H')\phi(H')b_0(t^{-1}\|H\|)e^{i\langle H,H'-w\Lambda\rangle} \left(\int_K e^{i\langle X,\Ad k(H')\rangle} dk\right) dHdH'.
    \end{align*}
    Taking the limit as $t\to \infty$, the integral for $H$ above gives a delta distribution (up to a constant multiple) at $w\Lambda$. Therefore, there exists a nonzero constant $C$ such that
    \[
    \lim_{t\to\infty}(\phi  \mu_t(\cdot;0))^\vee( X) = C \omega(\Lambda)\phi(\Lambda)J_{\Lambda}(X).
    \]
    Therefore, for any $t\geq1$ and $X\in\fp$, by the dominated convergence theorem, we have
    \begin{align*}
        \lim_{t\to \infty}T_{0,t}f(X)=\lim_{t\to\infty}\int_\fp f(X-Y)(\phi  \mu_t(\cdot;0))^\vee(Y) dY = \int_\fp f(X-Y)\lim_{t\to\infty}(\phi  \mu_t(\cdot;0))^\vee(Y) dY,
    \end{align*}
     which completes the proof.
\end{proof}

\begin{proposition}\label{prop: complex bound for restriction}
    For any $t\geq1$, when $\Re z = \frac{1}{2}$ we have $\|T_{z,t}\|_{L^1\to L^\infty}\lesssim1$, and when $\Re z = -\frac{r}{n-r}$ we have $\|T_{z,t}\|_{L^2\to L^2}\lesssim1$, where both of the implied constants are independent of $t$.
\end{proposition}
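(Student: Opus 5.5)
The two bounds are handled separately, mirroring Proposition \ref{prop 6.4} with the roles of the two sides of the Fourier transform reversed. Throughout, write $\mu_t(H';z)=\sum_{w\in W}\widehat{G_z}(w\Lambda-H')$ (up to sign conventions), where
\[
G_z(H)=(2^{z}-2^{-\frac{r}{n-r}})\,b_0(t^{-1}\|H\|)\Bigl(b_0\big(\omega(H)\big)+\sum_{m\ge1}2^{mz}b_1\big(2^{-m}\omega(H)\big)\Bigr),
\]
which is compactly supported for each fixed $t$, so every sum below is finite.

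\emph{The $L^2\to L^2$ bound.} By Plancherel, $\|T_{z,t}\|_{L^2\to L^2}=\sup_{Y\in\fp}|\phi(Y)\mu_t(Y;z)|$. By $K$-invariance it suffices to bound $\mu_t(H';z)$ uniformly for $H'$ in a compact subset of $\fa^+$ and $w\in W$, i.e. to bound
\[
(2^{z}-2^{-\frac{r}{n-r}})\int_\fa b_0(t^{-1}\|H\|)\Bigl(b_0(\omega(H))+\sum_{m}2^{mz}b_1(2^{-m}\omega(H))\Bigr)e^{i\langle \nu,H\rangle}dH
\]
uniformly in $\nu\in\fa^*$ and $t$, with $z=-\frac{r}{n-r}+iy$. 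Rescaling $H\mapsto tH$ turns this into $t^{r}$ times an integral of exactly the form in Proposition \ref{prop: symbol estimates}, with $\chi=b_0(\|\cdot\|)$, the parameter $t$, and frequency $t\nu$. Concretely, $\omega(tH)=t^{n-r}\omega(H)$ and $\chi(H)=b_0(\|H\|)$ lies in $C_c^\infty(\fa)$. The prefactor $2^{-\frac{r}{n-r}}(2^{iy}-1)$ matches the proposition after factoring out $2^{-\frac{r}{n-r}}$, since $2^z-2^{-\frac{r}{n-r}}=2^{-\frac{r}{n-r}}(2^{iy}-1)$. Proposition \ref{prop: symbol estimates} bounds the rescaled integral by $t^{-r}$, which cancels the Jacobian $t^r$ and gives a bound $\lesssim1$, uniform in $y$, $\nu$ and $t$.

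\emph{The $L^1\to L^\infty$ bound at $\Re z=\frac12$.} Here one needs $\sup_X|(\phi\mu_t(\cdot;z))^\vee(X)|\lesssim1$. Using polar coordinates, as in Lemma \ref{lem: dominated convergence},
\[
(\phi\mu_t)^\vee(X)=\sum_w\int_\fa G_z(H)e^{-i\langle H,w\Lambda\rangle}\Bigl(\int_{\fa^+}\omega(H')\phi(H')e^{i\langle H,H'\rangle}J_{H'}(X)\,dH'\Bigr)dH .
\]
Reduce to $X=\Ad k(X_0)$ with $X_0\in\overline{\fa^+}$, and write $J_{H'}(X_0)=E(1;X_0,H')$. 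Theorem \ref{thm: Weyl Harish-Chandra symbol 2} gives
\[
J_{H'}(X_0)=\sum_{w'}e^{i\langle H',w'X_0\rangle}b_{w'}(X_0,H'),
\]
with $\partial_{H'}$-derivatives bounded by $\prod_\alpha(1+|\alpha(X_0)|)^{-m_\alpha/2}$. Since $\omega\phi$ is smooth and compactly supported near $\Lambda$, integrating by parts in $H'$ makes the inner integral
\[
\lesssim \prod_\alpha(1+|\alpha(X_0)|)^{-m_\alpha/2}\sum_{w'}(1+\|H+w'X_0\|)^{-N}.
\]
This is the analogue of Lemma \ref{lem: bd for spectral integral} with $t=1$. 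Next use $|G_z(H)|\lesssim (1+|\omega(H)|)^{1/2}\lesssim\prod_\alpha(1+|\alpha(H)|)^{m_\alpha/2}$, which holds because only $O(1)$ of the dyadic pieces overlap at each point and $\Re z=\frac12$. Combined with
\[
1+|\alpha(H)|\lesssim(1+|\alpha(w'X_0)|)(1+\|H+w'X_0\|),
\]
the growth of $G_z$ is absorbed by the decay factor. The $H$-integral then converges, leaving a bound $\lesssim\prod_\alpha(1+|\alpha(X_0)|)^{m_\alpha(\frac12-\frac12)}=1$, uniform in $t$ and $\Im z$.

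The main obstacle is the $L^1\to L^\infty$ step. One must check that the symbol bounds of Theorem \ref{thm: Weyl Harish-Chandra symbol 2} hold uniformly for $H'$ in a neighborhood of $\Lambda$ inside a compact regular set, which is needed for the $\nu$-derivative estimates used in the integration by parts. One must also verify that the growth of $G_z$ is exactly balanced by the Bessel decay, with no loss coming from the cutoff $b_0(t^{-1}\|H\|)$.
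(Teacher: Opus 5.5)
Your proposal is correct and follows the paper's proof essentially step for step: the $L^2\to L^2$ bound via Plancherel, the rescaling $H\mapsto tH$ and Proposition \ref{prop: symbol estimates}, and the $L^1\to L^\infty$ bound via polar coordinates, Theorem \ref{thm: Weyl Harish-Chandra symbol 2}, nonstationary phase in $H'$, and absorbing the $|\omega(H)|^{1/2}$ growth through the triangle inequality $1+|\alpha(H)|\lesssim(1+|\alpha(wX)|)(1+\|H+wX\|)$. The points you flag as obstacles are settled by applying Theorem \ref{thm: Weyl Harish-Chandra symbol 2} on a compact regular set containing $\supp\phi$, and by noting that $0\le b_0(t^{-1}\|H\|)\le 1$ while the inner-integral bound does not depend on $t$.
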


\begin{proof}[Proof of Theorem \ref{thm: convolution} from Proposition \ref{prop: complex bound for restriction}]
    From Stein's complex interpolation theorem for the holomorphic family of operators $T_{z,t}$ in the vertical strip $-\frac{r}{n-r}\leq\Re z\leq \frac{1}{2}$, we have $\|T_{0,t}\|_{p_c'\to p_c}\lesssim 1$ uniformly for all $t\geq1$. Let $C$ be the constant from Lemma \ref{lem: dominated convergence}. Applying Fatou's lemma to $|T_{0,t}f|^{p_c}$ we get
    \begin{align*}
        \| f*J_\Lambda\|_{p_c}\leq |C|^{-1} \liminf_{t\to\infty} \| T_{0,t}f\|_{p_c}\lesssim\|f\|_{p_c'},
    \end{align*}
    so we prove Theorem \ref{thm: convolution}.
\end{proof}

\begin{proof}[Proof of Proposition \ref{prop: complex bound for restriction}]
    By Young's inequality, $\|T_{z,t}\|_{L^1\to L^\infty}$ is dominated by the sup-norm of $(\phi  \mu_t(\cdot;z))^\vee $. Moreover, by the $K$-invariance, it suffices to prove, for $X\in\fa$ and $\Re z =\frac{1}{2}$, the boundedness of
\begin{align*}
    &(\phi  \mu_t(\cdot;z))^\vee(X)\\
    =&\int_K \int_{\fa^+} \phi(H')\mu_t(H';z) e^{i\langle X,\Ad k(H')\rangle} \omega(H') dH' dk\\
    =&(2^{z}-2^{-\frac{r}{n-r}})\sum_{w\in W}\int_\fa e^{-i\langle H,w\Lambda\rangle}b_0(t^{-1}\|H\|)\left(b_0\big(\omega(H)\big) + \sum_{m=1}^{\infty} 2^{mz}  b_1\big(2^{-m}\omega(H)\big)\right)\\
    &\left(\int_{\fa^+}\omega(H') \phi(H')e^{i\langle H,H'\rangle}J_{H'}(X) dH'\right)dH.
\end{align*}
Applying Theorem \ref{thm: Weyl Harish-Chandra symbol 2} to $\omega(H')\phi(H')J_{H'}(X)$, there exist coefficients $b_{w}(X,H')$ which are smooth in $H'$ and satisfy the derivative bounds \eqref{eq:symbol 2}, so that
\[\omega(H')\phi(H')J_{H'}(X)=\sum_{w\in W}e^{i\langle H',wX\rangle}b_{w}(X,H').\]
Therefore,
\begin{align}\label{eq: nonstationary phase for J integral}\begin{split}
    \int_{\fa^+}\omega(H') \phi(H')e^{i\langle H,H'\rangle}J_{H'}(X) dH' =\sum_{w\in W}\int_{\fa^+} b_{w}(X,H') e^{i\langle H',wX+H\rangle} dH'\\
    \lesssim_N \prod_{\alpha\in\Delta^+}(1+|\alpha(X)|)^{-m_\alpha/2} \sum_{w\in W} (1+\|H+wX\|)^{-N}.
\end{split}\end{align}
Moreover, for any $H\in\fa$, there are at most $O(1)$ many $m$ so that $b_1\big(2^{-m}\omega(H)\big)\neq0$, so for $\Re z=\frac{1}{2}$
\begin{align*}
    b_0\big(\omega(H)\big) + \sum_{m=1}^{\infty} 2^{mz}  b_1\big(2^{-m}\omega(H)\big) \lesssim 1+ |\omega(H)|^{1/2}\lesssim \prod_{\alpha\in\Delta^+}(1+|\alpha(H)|)^{m_\alpha/2}.
\end{align*}
 For each positive root $\alpha\in\Delta^+$, the triangle inequality gives
    \begin{align*}
        1+|\alpha(H)|\lesssim (1+|\alpha(wX)|)(1+\|H+wX\|),
    \end{align*}
so taking the product gives, for any $w\in W$,
\begin{align*}
    b_0\big(\omega(H)\big) + \sum_{m=1}^{\infty} 2^{mz}  b_1\big(2^{-m}\omega(H)\big) \lesssim \prod_{\alpha\in\Delta^+}(1+|\alpha(H)|)^{m_\alpha/2}\\
    \lesssim(1+\|H+wX\|)^{\frac{n-r}{2}} \prod_{\alpha\in\Delta^+}(1+|\alpha(X)|)^{m_\alpha/2}.
\end{align*}
Combining this with \eqref{eq: nonstationary phase for J integral} yields that
\begin{align*}
    (\phi  \mu_t(\cdot;z))^\vee(X)\lesssim_N\sum_{w\in W} \int_\fa (1+\|H+wX\|)^{-N} dH \lesssim1.
\end{align*}

When $\Re z=-\frac{r}{n-r}$, to prove the $L^2\to L^2$ boundedness, by the Plancherel theorem, it suffices to show that
\begin{align}\label{eq: phi J bound}
    \sup_{H'\in\fa}|\mu_t(H';z)|\lesssim 1.
\end{align}
We have, for $z=-\frac{r}{n-r}+iy$ with $y\in\BR$,
\begin{align*}
    \mu_t(H';z) =&2^{-\frac{r}{n-r}}(2^{iy}-1)\\
    &\sum_{w\in W}\int_\fa b_0(t^{-1}\|H\|)\left(b_0\big(\omega(H)\big) + \sum_{m=1}^{\infty} 2^{mz}  b_1\big(2^{-m}\omega(H)\big)\right)e^{i\langle H,H'-w\Lambda\rangle} dH.
\end{align*}
Therefore, \eqref{eq: phi J bound} follows from the boundedness of
\begin{align*}
    &(2^{iy}-1)\int_\fa b_0(t^{-1}\|H\|)\left(b_0\big(\omega(H)\big) + \sum_{m=1}^{\infty} 2^{mz}  b_1\big(2^{-m}\omega(H)\big)\right)e^{i\langle H,H'\rangle} dH\\
    =&t^r(2^{iy}-1)\int_\fa b_0(\|H\|)\left(b_0\big(t^{n-r}\omega(H)\big) + \sum_{m=1}^{\infty} 2^{mz}  b_1\big(t^{n-r}2^{-m}\omega(H)\big)\right)e^{i\langle H,tH'\rangle} dH
\end{align*}
uniformly for $H'\in\fa$. This follows directly from Proposition \ref{prop: symbol estimates}.
\end{proof}


\bibliographystyle{alpha} 
\bibliography{refs.bib} 

\end{document}